\DeclareMathOperator{\Nil}{Nil}
\DeclareMathOperator{\var}{var}
\DeclareMathOperator{\ZR}{ZR}
\newtheorem{theorem}{Theorem}[section]
\newtheorem{proposition}[theorem]{Proposition}
\newtheorem{lemma}[theorem]{Lemma}
\newtheorem{corollary}[theorem]{Corollary}
\renewcommand*\subjclass[2][2000]{\def\@subjclass{#2}\@ifundefined
{subjclassname@#1}{\ClassWarning{\@classname}{Unknown edition (#1) of
Mathematics Subject Classification; using '2000'.}}{\@xp\let\@xp
\subjclassname\csname subjclassname@#1\endcsname}}
\begin{document}

\title[Proofs of definability of some varieties of semigroups]{Proofs of
definability of some varieties\\
and sets of varieties of semigroups}

\author[B. M. Vernikov]{B. M. Vernikov\\
\\
Communicated by L. N. Shevrin}

\address{Department of Mathematics and Mechanics, Ural State University,
Lenina 51, 620083 Ekaterinburg, Russia}

\email{bvernikov@gmail.com}

\date{}

\thanks{The work was partially supported by the Russian Foundation for Basic
Research (grants No.~09-01-12142, 10-01-00524) and the Federal Education
Agency of the Russian Federation (project No.~2.1.1/3537).}

\begin{abstract}
We show that many important varieties and sets of varieties of semigroups may
be defined by relatively simple and transparent first-order formulas in the
lattice of all semigroup varieties.
\end{abstract}

\keywords{Semigroup, variety, lattice of varieties, first-order formula,
definable set of varieties}

\subjclass{Primary 20M07, secondary 08B15}

\maketitle

\section*{Introduction}
\label{intr}

A subset $A$ of a lattice $\langle L;\vee,\wedge\rangle$ is called \emph
{definable in} $L$ if there exists a first-order formula $\Phi(x)$ with one
free variable $x$ in the language of lattice operations $\vee$ and $\wedge$
which \emph{defines $A$ in} $L$. This means that, for an element $a\in L$,
the sentence $\Phi(a)$ is true if and only if $a\in A$. If $A$ consists of a
single element, we speak about definability of this element.

We denote the lattice of all semigroup varieties by \textbf{SEM}. A set of
semigroup varieties $X$ (or a single semigroup variety $\mathcal X$) is said
to be \emph{definable} if it is definable in \textbf{SEM}. In this situation
we will say that the corresponding first-order formula \emph{defines} the set
$X$ or the variety $\mathcal X$.

A number of deep results about definable varieties and sets of varieties of
semigroups have been obtained in \cite{Jezek-McKenzie-93} by Je\v{z}ek and
McKenzie\footnote{We note that the paper \cite{Jezek-McKenzie-93}, as well as
the articles \cite{Grech-09,Kisielewicz-04} mentioned in Section \ref{commut}
below have dealt with the lattice of equational theories of semigroups, that
is, the dual of \textbf{SEM} rather than the lattice \textbf{SEM} itself.
When reproducing results from \cite{Grech-09,Kisielewicz-04,
Jezek-McKenzie-93}, we adapt them to the terminology of the present note.}.
It has been conjectured there that every finitely based semigroup variety is
definable up to duality. The conjecture is confirmed in \cite
{Jezek-McKenzie-93} for locally finite finitely based varieties. However the
article \cite{Jezek-McKenzie-93} contains no explicit first-order formulas
that define any given locally finite finitely based variety. On their way to
obtain the mentioned fundamental result, Je\v{z}ek and McKenzie proved the
definability of several important sets of semigroup varieties such as the
sets of all finitely based, all locally finite, all finitely generated and
all 0-reduced semigroup varieties. But the article \cite{Jezek-McKenzie-93}
contains no explicit first-order formulas that define any of these sets of
varieties. The task of writing an explicit formula that defines the set of
all finitely based or the set of all locally finite or the set of all
finitely generated varieties seems to be extremely difficult. But on the
other hand, many traditionally considered sets of semigroup varieties
(including the set of all 0-reduced varieties) and many important individual
varieties (for instance, an arbitrary Abelian periodic group variety) can be
defined by relatively simple first-order formulas. Such formulas come
naturally from the structural theory of semigroup varieties.

Here we present explicit formulas that define several well-known sets of
semigroup varieties and individual varieties. Each of these varieties and
sets of varieties has appeared multiple number of times in many articles in
semigroup theory.

We will denote the conjunction by \& rather than $\wedge$ because the latter
symbol stands for the meet in a lattice. Since the disjunction and the join
in a lattice are denoted usually by the same symbol $\vee$, we use this
symbol for the join and denote the disjunction by \textsc{or}. Evidently, the
relations $\le$, $\ge$, $<$ and $>$ in a lattice $L$ can
be expressed in terms of, say, meet operation $\wedge$ in $L$. So, we will
freely use these four relations in formulas. Let $\Phi(x)$ be a first-order
formula. For the sake of brevity, we put
$$\min\nolimits_x\bigl\{\Phi(x)\bigr\}\;\rightleftharpoons\;\Phi(x)\,\&\,
(\forall y)\,\bigl(y<x\longrightarrow\neg\Phi(y)\bigr)$$
and
$$\max\nolimits_x\bigl\{\Phi(x)\bigr\}\;\rightleftharpoons\;\Phi(x)\,\&\,
(\forall y)\,\bigl(x<y\longrightarrow\neg\Phi(y)\bigr)\ldotp$$
Clearly, the formula $\min_x\bigl\{\Phi(x)\bigr\}$ [respectively $\max_x\bigl
\{\Phi(x)\bigr\}$] defines the set of all minimal [maximal] elements of the
set
$$\{a\in L\mid\ \text{the sentence}\ \Phi(a)\ \text{is true}\}\ldotp$$

\section{Atoms and chain varieties}
\label{atoms and chain}

Many important sets of semigroup varieties admit a characterization in the
language of atoms of the lattice \textbf{SEM}. The set of all atoms of a
lattice $L$ with 0 is defined by the formula
$$\mathtt A(x)\;\rightleftharpoons\;(\exists y)\;\bigl((\forall z)\,(y\le
z)\,\&\,\min\nolimits_x\{x\ne y\}\bigr)\ldotp$$
A description of all atoms of the lattice \textbf{SEM} is well known. To list
these varieties, we need some notation.

By $\var\Sigma$ we denote the semigroup variety given by the identity system
$\Sigma$. A pair of identities $wx=xw=w$ where the letter $x$ does not occur
in the word $w$ is usually written as the symbolic identity $w=0$\footnote
{This notation is justified because a semigroup with such identities has a
zero element and all values of the word $w$ in this semigroup are equal to
zero.}. Identities of the form $w=0$ as well as varieties given by identities
of such a form are called 0-\emph{reduced}. Let us fix notation for several
semigroup varieties:
\begin{align*}
&\mathcal A_n=\var\,\{x^ny=y,\,xy=yx\}\ \text{--- the variety of Abelian
groups}\\
&\phantom{\mathcal A_n=\var\,\{x^ny=y,\,xy=yx\}\ \text{--- }}\text{whose
exponent divides}\ n,\\
&\mathcal{SL}=\var\,\{x^2=x,\,xy=yx\}\ \text{--- the variety of
semilattices},\\
&\mathcal{LZ}=\var\,\{xy=x\}\ \text{--- the variety of left zero semigroups},
\\
&\mathcal{RZ}=\var\,\{xy=y\}\ \text{--- the variety of right zero
semigroups},\\
&\mathcal{ZM}=\var\,\{xy=0\}\ \text{--- the variety of null semigroups}\ldotp
\end{align*}

The following lemma is well known (see the surveys \cite{Evans-71,
Shevrin-Vernikov-Volkov-09}, for instance).

\begin{lemma}
\label{atoms}
The varieties $\mathcal A_p$ \textup(where $p$ is a prime number\textup),
$\mathcal{SL}$, $\mathcal{LZ}$, $\mathcal{RZ}$, $\mathcal{ZM}$ and only they
are atoms of the lattice $\mathbf{SEM}$.\qed
\end{lemma}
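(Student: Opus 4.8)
The plan is to prove the two assertions separately: first, that each of the five listed varieties is an atom of $\mathbf{SEM}$, and second, that no other variety is an atom. Throughout I use the fact that an atom is precisely a minimal nontrivial variety, equivalently a nontrivial variety $\mathcal V$ with $\var S=\mathcal V$ for every nontrivial $S\in\mathcal V$. For the first assertion I would verify in each case that the variety is generated by a distinguished small member and that every nontrivial member contains a copy of that member. Concretely, $\mathcal A_p=\var\,\mathbb Z_p$ and every nontrivial member of $\mathcal A_p$ contains a subgroup isomorphic to $\mathbb Z_p$; the two-element semilattice generates $\mathcal{SL}$ and embeds into every nontrivial semilattice (given $a\ne b$, at least one of $\{a,ab\}$, $\{b,ab\}$ is a two-element subsemilattice); the two-element left zero [right zero] semigroup generates $\mathcal{LZ}$ [$\mathcal{RZ}$], and any two distinct elements of a left zero [right zero] semigroup form such a subsemigroup; and the two-element null semigroup generates $\mathcal{ZM}$ and sits inside every nontrivial null semigroup as $\{0,a\}$. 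In each case a nontrivial subvariety would contain the distinguished member and hence coincide with the whole variety, so the variety is an atom.

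For the second assertion it suffices, given these five atoms, to prove the claim that every nontrivial variety $\mathcal V$ contains at least one of $\mathcal A_p$, $\mathcal{SL}$, $\mathcal{LZ}$, $\mathcal{RZ}$, $\mathcal{ZM}$. Indeed, if $\mathcal V$ is an atom then it is nontrivial, so it contains one of these atoms, say $\mathcal X$, and minimality forces $\mathcal X=\mathcal V$. To prove the claim I would fix a nontrivial $S\in\mathcal V$ and split into cases. If $S$ has an element $a$ with $a^2\ne a$, I analyze the monogenic subsemigroup $\langle a\rangle$. When $\langle a\rangle$ is finite with period $m$, a prime divisor $p\mid m$ yields a subgroup isomorphic to $\mathbb Z_p$ and hence $\mathcal A_p\subseteq\mathcal V$; when the period is $1$, so that $a^{r}=a^{r+1}$ is idempotent with index $r\ge2$, the pair $\{a^{r-1},a^{r}\}$ is a two-element null subsemigroup; and when $\langle a\rangle$ is infinite, the Rees quotient by the ideal $\{a^{n}:n\ge2\}$ is a two-element null semigroup. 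In the latter two subcases $\mathcal{ZM}\subseteq\mathcal V$.

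The remaining case is that $S$ is a band, i.e.\ every element is idempotent. If $S$ is commutative it is a nontrivial semilattice and we obtain $\mathcal{SL}$ exactly as above. Otherwise I would invoke the representation of a band as a semilattice of rectangular bands: if the largest semilattice quotient of $S$ is nontrivial we again get $\mathcal{SL}\subseteq\mathcal V$, whereas if it is trivial then $S$ is a nontrivial rectangular band, isomorphic to a direct product $L\times R$ of a left zero and a right zero semigroup, which contains a two-element left zero subsemigroup when $|L|\ge2$ and a two-element right zero subsemigroup when $|R|\ge2$, giving $\mathcal{LZ}\subseteq\mathcal V$ or $\mathcal{RZ}\subseteq\mathcal V$. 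I expect this band case to be the main obstacle, since it is the only step requiring a genuine structural input (the semilattice decomposition of bands, or an equally careful elementary manipulation of idempotents); the monogenic analysis and the verifications in the first assertion are routine. Combining the two assertions yields the lemma.
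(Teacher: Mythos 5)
The paper gives no proof of this lemma at all: it is stated as well known, with a pointer to the surveys of Evans and of Shevrin--Vernikov--Volkov (the classification of the atoms of $\mathbf{SEM}$ is the classical Kalicki--Scott result). Your argument is the standard proof of that classical fact, and it is correct and essentially complete. Both halves check out: each of the five varieties is an atom because it is generated by a two-element (or $p$-element) semigroup that embeds into every nontrivial member, and every nontrivial variety $\mathcal V$ contains one of the five because a nontrivial $S\in\mathcal V$ either has a non-idempotent element $a$ --- in which case $\langle a\rangle$ yields a copy of $\mathbb Z_p$ when the period exceeds $1$, the two-element null subsemigroup $\{a^{r-1},a^r\}$ when the period is $1$ and the index is $r\ge2$ (indeed $a^{2r-2}=a^{2r-1}=a^r$ there), or a two-element null Rees quotient in the infinite case --- or is a band, where the semilattice-of-rectangular-bands decomposition produces a two-element semilattice, left zero, or right zero semigroup as a homomorphic image or subsemigroup. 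The only nontrivial external inputs are the structure of monogenic semigroups and the band decomposition theorem, exactly as you flag; the small verifications (that one of $\{a,ab\}$, $\{b,ab\}$ is a two-element subsemilattice, that the two-element generators do generate the respective varieties, that subgroups and Rees quotients stay inside $\mathcal V$) are all routine and correctly handled. So your proposal supplies a sound proof where the paper merely cites the literature.
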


If $\mathcal V$ is a semigroup variety then we denote by $\overleftarrow
{\mathcal V}$ the variety \emph{dual to} $\mathcal V$, that is, the variety
consisting of semigroups anti-isomorphic to members of $\mathcal V$. The map
from the lattice \textbf{SEM} into itself given by the rule $\mathcal V
\longmapsto\overleftarrow{\mathcal V}$ is an automorphism of \textbf{SEM}.
Therefore if $\mathcal V\ne\overleftarrow{\mathcal V}$ then the variety
$\mathcal V$ is not definable. The varieties $\mathcal{LZ}$ and $\mathcal
{RZ}$ are dual to each other, whence they are not definable. We will say that
a variety $\mathcal V$ is \emph{definable up to duality} if $\mathcal V\ne
\overleftarrow{\mathcal V}$ and the set $\{\mathcal V,\overleftarrow{\mathcal
V}\}$ is definable. We are going to verify that all atoms of \textbf{SEM}
except $\mathcal{LZ}$ and $\mathcal{RZ}$ are definable, while the varieties
$\mathcal{LZ}$ and $\mathcal{RZ}$ are definable up to duality (see
Proposition \ref{definable sets of atoms} and Theorem \ref{A_n} below). To
achieve this aim, we need some additional definitions, notation and results.
Put
$$\mathtt{Neut}(x)\;\rightleftharpoons\;(\forall y,z)\;\bigl((x\vee y)\wedge
(y\vee z)\wedge(z\vee x)=(x\wedge y)\vee(y\wedge z)\vee(z\wedge x)\bigr)
\ldotp$$
An element $x$ of a lattice $L$ such that the sentence $\mathtt{Neut}(x)$ is
true is called \emph{neutral}. Neutral elements play a distinguished role in
the lattice theory (see Section III.2 in \cite{Gratzer-98}, for instance). We
denote by $\mathcal T$ the trivial semigroup variety, and by $\mathcal{SEM}$
the variety of all semigroups.

\begin{lemma}[\!\!{\mdseries\cite[Proposition 2.4]{Volkov-05}}]
\label{neutral}
The varieties $\mathcal T$, $\mathcal{SL}$, $\mathcal{ZM}$, $\mathcal{SL\vee
ZM}$, $\mathcal{SEM}$ and only they are neutral elements of the lattice
$\mathbf{SEM}$.\qed
\end{lemma}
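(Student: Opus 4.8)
The plan is to prove the two halves of the statement separately and, throughout, to replace the single neutral identity by more tractable equivalent conditions. I would use the standard characterization (Section III.2 of \cite{Gratzer-98}) that an element $a$ of a lattice is neutral if and only if it is distributive, meaning $a\vee(y\wedge z)=(a\vee y)\wedge(a\vee z)$ for all $y,z$, dually distributive, meaning $a\wedge(y\vee z)=(a\wedge y)\vee(a\wedge z)$ for all $y,z$, and cancellable, meaning that $a\wedge y=a\wedge z$ and $a\vee y=a\vee z$ together force $y=z$. I would also note at the start that neutrality is preserved by every lattice automorphism, so the set of neutral elements of $\mathbf{SEM}$ is invariant under the duality automorphism $\mathcal V\mapsto\overleftarrow{\mathcal V}$ discussed above; each of the five listed varieties is self-dual, which is a consistency check and, in the harder direction, lets me treat $\mathcal V$ and $\overleftarrow{\mathcal V}$ together.

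For the claim that the five varieties are neutral, the elements $\mathcal T$ and $\mathcal{SEM}$ are the least and greatest elements of $\mathbf{SEM}$ and so are neutral in any lattice. For $\mathcal{SL}$, $\mathcal{ZM}$ and $\mathcal{SL}\vee\mathcal{ZM}$ I would verify the three conditions using the fact (Lemma \ref{atoms}) that $\mathcal{SL}$ and $\mathcal{ZM}$ are atoms: a meet $\mathcal V\wedge\mathcal{SL}$ equals $\mathcal{SL}$ if $\mathcal{SL}\subseteq\mathcal V$ and $\mathcal T$ otherwise, and similarly for $\mathcal{ZM}$, while $[\mathcal T,\mathcal{SL}\vee\mathcal{ZM}]$ is the four-element Boolean lattice. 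The substantive inputs are the descriptions of the relevant joins: $\mathcal V\vee\mathcal{SL}$ is defined by exactly those identities of $\mathcal V$ whose two sides contain the same letters, and $\mathcal V\vee\mathcal{ZM}$ by exactly those identities of $\mathcal V$ both of whose sides have length at least two. Substituting these descriptions into the distributivity, dual distributivity and cancellation conditions reduces each to a finite and routine, though careful, verification.

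For the claim that no other variety is neutral I would argue contrapositively: given a variety $\mathcal V$ not in the list, exhibit $\mathcal Y$ and $\mathcal Z$ for which one of the three conditions fails. I would organize the argument as a case analysis guided by the structure theory of $\mathbf{SEM}$ --- according to whether $\mathcal V$ contains a nontrivial group, whether it contains $\mathcal{LZ}$ or $\mathcal{RZ}$, whether it lies in the commutative region, and whether it is periodic. In each case the witnesses $\mathcal Y,\mathcal Z$ can be chosen among small, well-understood varieties, namely the atoms of Lemma \ref{atoms}, their pairwise joins, and a few slightly larger $0$-reduced or idempotent varieties, and one checks that $\mathcal V\vee(\mathcal Y\wedge\mathcal Z)$ lies strictly below $(\mathcal V\vee\mathcal Y)\wedge(\mathcal V\vee\mathcal Z)$ or that cancellation is violated. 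Self-duality halves the number of essentially distinct cases.

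The step I expect to be the main obstacle is this last case analysis: ensuring that the cases genuinely exhaust all semigroup varieties and that the chosen witnesses are at once concrete enough to compute with and sharp enough to break neutrality. A secondary difficulty is proving the join formulas used in the positive direction with full rigour, since the neutral identity is sensitive to the precise values of $\mathcal V\vee\mathcal{SL}$ and $\mathcal V\vee\mathcal{ZM}$; pinning these down correctly is exactly what makes the verification of the three conditions succeed.
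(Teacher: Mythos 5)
The first thing to say is that the paper contains no proof of this lemma: it is imported verbatim from \cite[Proposition 2.4]{Volkov-05} and used as a black box, so there is no internal argument to measure yours against; your proposal has to stand on its own.

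Your sufficiency half is a sound plan. Gr\"atzer's characterization of neutrality (distributive, dually distributive and cancellable) is the right tool; $\mathcal T$ and $\mathcal{SEM}$ are neutral for free as the bounds of the lattice; and the two equational descriptions you invoke are correct: an identity $u=v$ holds in $\mathcal V\vee\mathcal{SL}$ iff it holds in $\mathcal V$ and $c(u)=c(v)$, and in $\mathcal V\vee\mathcal{ZM}$ iff it holds in $\mathcal V$ and either $u$ and $v$ coincide or both have length at least $2$. One caveat: verifying distributivity of, say, $\mathcal{ZM}$ is not pure substitution into these formulas --- the theory of $\mathcal Y\wedge\mathcal Z$ is the \emph{deductive closure} of $\mathrm{Id}(\mathcal Y)\cup\mathrm{Id}(\mathcal Z)$, so you must track a derivation and show that any consequence lying in $\mathrm{Id}(\mathcal{ZM})$ is already a consequence of the sub-collections lying in $\mathrm{Id}(\mathcal{ZM})$. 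This is doable but it is an argument about proofs of identities, not a ``finite'' check, and your write-up glosses over it.

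The genuine gap is the ``only they'' direction, which is where essentially all of the content of the lemma lives. You reduce it to ``a case analysis guided by the structure theory of $\mathbf{SEM}$'' with witnesses ``chosen among small, well-understood varieties,'' but you never write down the cases, never argue that they exhaust the class of all semigroup varieties outside the five-element list, and never exhibit a single witness pair $(\mathcal Y,\mathcal Z)$ together with a verification that distributivity, dual distributivity or cancellation actually fails. For instance, to handle a variety containing a group atom $\mathcal A_p$, or containing $\mathcal{LZ}$, or a nil-variety strictly above $\mathcal{ZM}$, one needs concrete joins and meets computed in $\mathbf{SEM}$, and these computations are exactly what Volkov's proof supplies. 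As it stands, your second half is a statement of strategy rather than a proof, and you acknowledge as much when you flag the case analysis as ``the main obstacle''; until those cases are listed, shown exhaustive, and each equipped with an explicit counterexample to one of the three conditions, the lemma is not proved.
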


A semigroup variety $\mathcal V$ is called \emph{chain} if the subvariety
lattice of $\mathcal V$ is a chain. Clearly, each atom of \textbf{SEM} is a
chain variety. The set of all chain varieties is definable by the formula
$$\mathtt{Ch}(x)\;\rightleftharpoons\;(\forall y,z)\,(y\le x\,\&\,z\le x
\longrightarrow y\le z\ \text{\textsc{or}}\ z\le y)\ldotp$$

We adopt the usual agreement that an adjective indicating a property shared
by all semigroups of a given variety is applied to the variety itself; the
expressions like ``completely regular variety'', ``periodic variety'',
``nil-variety'' etc.\ are understood in this sense.

Put
\begin{align*}
&\mathcal N_k=\var\,\{x^2=x_1x_2\cdots x_k=0,\,xy=yx\}\ (k\ \text{is a
natural number}),\\
&\mathcal N_\omega=\var\,\{x^2=0,\,xy=yx\},\\
&\mathcal N_3^2=\var\,\{x^2=xyz=0\},\\
&\mathcal N_3^c=\var\,\{xyz=0,\,xy=yx\}
\end{align*}
(in particular $\mathcal N_1=\mathcal T$ and $\mathcal N_2=\mathcal{ZM}$).
The following lemma is proved in \cite{Sukhanov-82}.

\begin{lemma}
\label{chain var}
The varieties $\mathcal{SL}$, $\mathcal{LZ}$, $\mathcal{RZ}$, $\mathcal N_k$,
$\mathcal N_\omega$, $\mathcal N_3^2$, $\mathcal N_3^c$ and only they are
non-group chain varieties of semigroups.\qed
\end{lemma}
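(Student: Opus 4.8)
The plan is to organize the whole argument around the unique atom of \textbf{SEM} lying beneath the variety in question. First I would check the easy direction, that each of $\mathcal{SL}$, $\mathcal{LZ}$, $\mathcal{RZ}$, $\mathcal N_k$, $\mathcal N_\omega$, $\mathcal N_3^2$ and $\mathcal N_3^c$ is indeed a non-group chain variety; this is a routine inspection of the subvariety lattice of each together with the observation that none of them consists of groups. For the converse, let $\mathcal V$ be an arbitrary non-trivial non-group chain variety. Since the subvariety lattice of $\mathcal V$ is a chain, $\mathcal V$ contains a unique minimal non-trivial subvariety, i.e.\ a unique atom $\mathcal A$ of \textbf{SEM} lies below $\mathcal V$; by Lemma~\ref{atoms}, $\mathcal A$ is one of $\mathcal A_p$ (with $p$ prime), $\mathcal{SL}$, $\mathcal{LZ}$, $\mathcal{RZ}$, $\mathcal{ZM}$. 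The proof then splits according to which of these is $\mathcal A$.

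Next I would dispose of the completely regular cases using the well-known dichotomy that a semigroup variety consists of completely regular semigroups if and only if it does not contain $\mathcal{ZM}$ (see \cite{Shevrin-Vernikov-Volkov-09}). Suppose $\mathcal A\ne\mathcal{ZM}$; then $\mathcal{ZM}\not\le\mathcal V$, so $\mathcal V$ is completely regular and hence a semilattice of completely simple semigroups. Because $\mathcal A$ is the \emph{only} atom below $\mathcal V$, the variety contains no non-trivial group unless $\mathcal A=\mathcal A_p$, and at most one of $\mathcal{SL}$, $\mathcal{LZ}$, $\mathcal{RZ}$, and one simply reads off the structure. If $\mathcal A=\mathcal A_p$ then the completely simple components are groups, the semilattice part is trivial, and $\mathcal V$ is a group variety, contradicting the hypothesis. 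If $\mathcal A=\mathcal{SL}$ then the components are trivial and $\mathcal V$ is a variety of semilattices, whence $\mathcal V=\mathcal{SL}$; and if $\mathcal A=\mathcal{LZ}$ or $\mathcal A=\mathcal{RZ}$ then $\mathcal V$ is a variety of left-zero, respectively right-zero, semigroups, so $\mathcal V=\mathcal{LZ}$ or $\mathcal V=\mathcal{RZ}$. Thus the completely regular non-group chain varieties are exactly $\mathcal{SL}$, $\mathcal{LZ}$, $\mathcal{RZ}$.

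The essential case, and the one I expect to be the main obstacle, is $\mathcal A=\mathcal{ZM}$. Here $\mathcal{ZM}$ is the unique atom, which forces $\mathcal V$ to contain no non-trivial group, no non-trivial semilattice, and no non-trivial left- or right-zero semigroup; combining this with periodicity (the absence of $\mathcal{SL}$ excludes the infinite cyclic semigroup) shows that every idempotent is a zero and hence that $\mathcal V$ is a nil-variety. So $\mathcal V$ is given by $0$-reduced identities together with possibly a commutativity law or a square-zero law. I would attach to $\mathcal V$ three discrete invariants: its degree $n$ (the least $n$ with $\mathcal V\models x_1\cdots x_n=0$, or $\omega$ if none exists), whether $\mathcal V\models x^2=0$, and whether $\mathcal V$ is commutative. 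The target list is precisely the admissible combinations of these invariants: $\mathcal N_k$ (commutative, $x^2=0$, finite degree $k$), $\mathcal N_\omega$ (commutative, $x^2=0$, degree $\omega$), $\mathcal N_3^c$ (commutative, $x^2\ne0$, degree $3$) and $\mathcal N_3^2$ (non-commutative, $x^2=0$, degree $3$). The crux is to prove that any nil-variety whose triple of invariants falls outside this list fails the chain condition.

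The hard part is therefore the combinatorial claim that every nil-variety with invariants outside the list contains two incomparable subvarieties. I would establish it by producing, for each forbidden configuration, an explicit pair of subvarieties, each cut out by one additional short $0$-reduced identity, and proving their incomparability by evaluating those identities in free objects of $\mathcal V$ of rank $2$ or $3$. Two phenomena drive the branching: the failure of commutativity and the failure of $x^2=0$, each of which, once the degree is large enough, produces a subvariety not comparable with the commutative, respectively square-zero, reduct. The delicate point is the exhaustive bookkeeping together with the boundary at degree $3$, where $\mathcal N_3$ lies below the three pairwise incomparable varieties $\mathcal N_4$, $\mathcal N_3^c$ and $\mathcal N_3^2$, so that a nil chain variety properly containing $\mathcal N_3$ must lie inside exactly one of the three resulting branches. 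Once incomparable pairs are exhibited in every excluded case, the nil chain varieties are seen to be exactly $\mathcal T=\mathcal N_1$, $\mathcal{ZM}=\mathcal N_2$, the remaining $\mathcal N_k$, $\mathcal N_\omega$, $\mathcal N_3^2$ and $\mathcal N_3^c$, which together with the completely regular cases completes the classification.
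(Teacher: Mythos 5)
First, a point of comparison: the paper does not prove this lemma at all --- it is stated as a known result imported from Sukhanov's article \cite{Sukhanov-82}, so there is no in-text proof to measure your argument against. Taken on its own terms, your overall strategy (a chain variety has a unique atom beneath it; split on which atom it is; settle the completely regular cases; then classify chain nil-varieties) is reasonable, and the completely regular half of your argument is essentially complete and correct.

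The genuine gap is in the case $\mathcal A=\mathcal{ZM}$, which is where all the content of the lemma lives. Two problems. First, the sentence ``So $\mathcal V$ is given by $0$-reduced identities together with possibly a commutativity law or a square-zero law'' does not follow from anything you have established: a nil-variety may be defined by identities that are neither $0$-reduced nor commutativity nor $x^2=0$ (permutational identities such as $xyzt=xzyt$, or identities like $xyx=x^2y$, for instance), and the claim that a \emph{chain} nil-variety admits such a basis is essentially equivalent to the conclusion you are trying to reach, so using it at this stage is circular. Second, the actual core of the classification --- producing, for every nil-variety whose triple of invariants (degree; whether $x^2=0$ holds; whether commutativity holds) falls outside the admissible list, an explicit pair of incomparable subvarieties --- is only announced, never carried out; you yourself flag it as ``the hard part'' and ``delicate bookkeeping.'' That exhaustive case analysis is precisely the substance of Sukhanov's theorem, and without it the converse inclusion is unproved. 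A smaller but real omission: even for a chain nil-variety whose invariants \emph{do} lie in the admissible list, you must still show it coincides with the corresponding listed variety (e.g.\ that the only non-commutative subvariety of $\var\{x^2=xyz=0\}$ of degree $3$ is $\mathcal N_3^2$ itself, and that every subvariety of $\mathcal N_\omega$ of degree $k$ is $\mathcal N_k$); this does follow from the subvariety-lattice computations you relegate to the ``easy direction,'' but the deduction needs to be made explicit rather than absorbed into the phrase ``the target list is precisely the admissible combinations of these invariants.''
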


The problem of a complete classification of chain group varieties seems to be
extremely difficult (see Subsection 11.6 in \cite{Shevrin-Vernikov-Volkov-09}
for additional comments on this subject). But in the Abelian case the problem
turns out to be trivial. The lattice of all Abelian periodic group varieties
is evidently isomorphic to the lattice of natural numbers ordered by
divisibility. This readily implies that non-trivial chain Abelian group
varieties are varieties $\mathcal A_{p^k}$ with prime $p$ and natural $k$,
and only they. Fig.\ \ref{chain varieties} shows the relative location of the
chain varieties mentioned above in the lattice \textbf{SEM}.

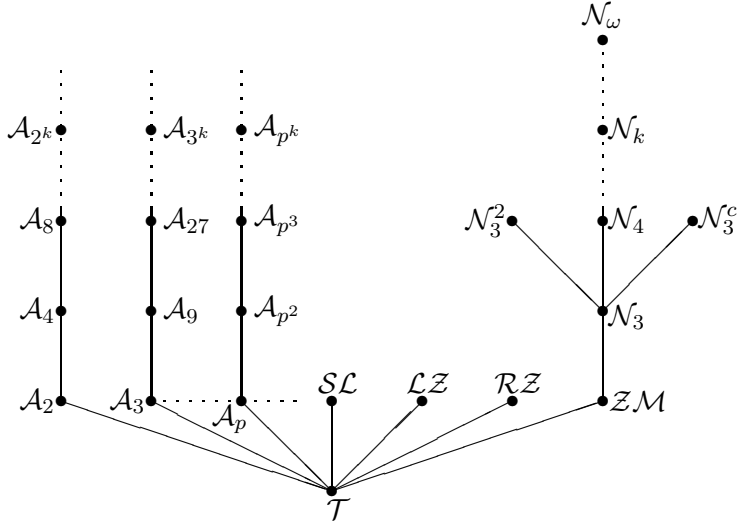
\begin{figure}[tbh]
\begin{center}
\unitlength=.8mm
\linethickness{0.4pt}
\begin{picture}(107,82)
\put(1,18){\line(0,1){32}}
\put(1,18){\line(3,-1){45}}
\put(16,18){\line(0,1){32}}
\put(16,18){\line(2,-1){30}}
\put(31,18){\line(0,1){32}}
\put(31,18){\line(1,-1){15}}
\put(46,3){\line(0,1){15}}
\put(46,3){\line(1,1){15}}
\put(46,3){\line(2,1){30}}
\put(46,3){\line(3,1){45}}
\put(76,48){\line(1,-1){15}}
\put(91,18){\line(0,1){32}}
\put(91,33){\line(1,1){15}}
\dashline{.5}(1,50)(1,73)
\dashline{.5}(16,50)(16,73)
\dashline{.5}(18,18)(40,18)
\dashline{.5}(31,50)(31,73)
\dashline{.5}(91,50)(91,76)
\put(1,18){\circle*{1.62}}
\put(1,33){\circle*{1.62}}
\put(1,48){\circle*{1.62}}
\put(1,63){\circle*{1.62}}
\put(16,18){\circle*{1.62}}
\put(16,33){\circle*{1.62}}
\put(16,48){\circle*{1.62}}
\put(16,63){\circle*{1.62}}
\put(31,18){\circle*{1.62}}
\put(31,33){\circle*{1.62}}
\put(31,48){\circle*{1.62}}
\put(31,63){\circle*{1.62}}
\put(46,3){\circle*{1.62}}
\put(46,18){\circle*{1.62}}
\put(61,18){\circle*{1.62}}
\put(76,18){\circle*{1.62}}
\put(76,48){\circle*{1.62}}
\put(91,18){\circle*{1.62}}
\put(91,33){\circle*{1.62}}
\put(91,48){\circle*{1.62}}
\put(91,63){\circle*{1.62}}
\put(91,78){\circle*{1.62}}
\put(106,48){\circle*{1.62}}
\put(0,18){\makebox(0,0)[rc]{$\mathcal A_2$}}
\put(0,33){\makebox(0,0)[rc]{$\mathcal A_4$}}
\put(0,48){\makebox(0,0)[rc]{$\mathcal A_8$}}
\put(0,63){\makebox(0,0)[rc]{$\mathcal A_{2^k}$}}
\put(12,18){\makebox(0,0)[cc]{$\mathcal A_3$}}
\put(18,33){\makebox(0,0)[lc]{$\mathcal A_9$}}
\put(18,48){\makebox(0,0)[lc]{$\mathcal A_{27}$}}
\put(18,63){\makebox(0,0)[lc]{$\mathcal A_{3^k}$}}
\put(29,15){\makebox(0,0)[cc]{$\mathcal A_p$}}
\put(33,33){\makebox(0,0)[lc]{$\mathcal A_{p^2}$}}
\put(33,48){\makebox(0,0)[lc]{$\mathcal A_{p^3}$}}
\put(33,63){\makebox(0,0)[lc]{$\mathcal A_{p^k}$}}
\put(62,21){\makebox(0,0)[cc]{$\mathcal{LZ}$}}
\put(92,32){\makebox(0,0)[lc]{$\mathcal N_3$}}
\put(75,48){\makebox(0,0)[rc]{$\mathcal N_3^2$}}
\put(107,48){\makebox(0,0)[lc]{$\mathcal N_3^c$}}
\put(92,48){\makebox(0,0)[lc]{$\mathcal N_4$}}
\put(92,63){\makebox(0,0)[lc]{$\mathcal N_k$}}
\put(91,82){\makebox(0,0)[cc]{$\mathcal N_\omega$}}
\put(77,21){\makebox(0,0)[cc]{$\mathcal{RZ}$}}
\put(47,21){\makebox(0,0)[cc]{$\mathcal{SL}$}}
\put(47,0){\makebox(0,0)[cc]{$\mathcal T$}}
\put(92,18){\makebox(0,0)[lc]{$\mathcal{ZM}$}}
\end{picture}
\caption{Non-group and Abelian group chain varieties}
\label{chain varieties}
\end{center}
\end{figure}

Combining above observations, it is easy to verify the following

\begin{proposition}
\label{definable sets of atoms}
The varieties $\mathcal{SL}$ and $\mathcal{ZM}$, and the set of varieties
$$\{\mathcal A_p\mid p\ \text{is a prime number}\}$$
are definable. The varieties $\mathcal{LZ}$ and $\mathcal{RZ}$ are definable
up to duality.
\end{proposition}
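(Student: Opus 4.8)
The plan is to build all four formulas from three predicates that are already available in the excerpt: the atom predicate $\mathtt A(x)$, the neutrality predicate $\mathtt{Neut}(x)$, and the maximal-chain-variety predicate $\max\nolimits_x\{\mathtt{Ch}(x)\}$. By Lemma~\ref{atoms} the atoms of $\mathbf{SEM}$ are precisely $\mathcal A_p$ (for $p$ prime), $\mathcal{SL}$, $\mathcal{LZ}$, $\mathcal{RZ}$ and $\mathcal{ZM}$, so the whole task reduces to separating these five classes from one another using the two predicates $\mathtt{Neut}$ and $\max\nolimits_x\{\mathtt{Ch}\}$.

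First I would partition the atoms by neutrality. Comparing Lemma~\ref{atoms} with Lemma~\ref{neutral}, whose neutral elements are $\mathcal T$, $\mathcal{SL}$, $\mathcal{ZM}$, $\mathcal{SL\vee ZM}$ and $\mathcal{SEM}$, one sees that the only neutral atoms are $\mathcal{SL}$ and $\mathcal{ZM}$: indeed $\mathcal T$ and $\mathcal{SEM}$ are the least and greatest elements, and $\mathcal{SL\vee ZM}$ is a join of two distinct atoms, so none of these three is itself an atom. Hence $\mathtt A(x)\,\&\,\mathtt{Neut}(x)$ already isolates $\{\mathcal{SL},\mathcal{ZM}\}$, while $\mathtt A(x)\,\&\,\neg\mathtt{Neut}(x)$ isolates $\{\mathcal A_p\mid p\text{ prime}\}\cup\{\mathcal{LZ},\mathcal{RZ}\}$.

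The decisive step is to split each of these two classes further by $\max\nolimits_x\{\mathtt{Ch}(x)\}$. Since every atom is a chain variety, for an atom $x$ this predicate asserts exactly that no chain variety lies strictly above $x$. Using Lemma~\ref{chain var} together with the classification of Abelian group chain varieties as the $\mathcal A_{p^k}$, I would check that $\mathcal A_p<\mathcal A_{p^2}$ and $\mathcal{ZM}=\mathcal N_2<\mathcal N_3$ place chain varieties strictly above $\mathcal A_p$ and $\mathcal{ZM}$, so neither of these is a maximal chain variety. The heart of the argument is the converse: each of $\mathcal{SL}$, $\mathcal{LZ}$, $\mathcal{RZ}$ is a maximal chain variety. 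For this I would run through the complete list of chain varieties and verify the required incomparabilities — a nil chain variety ($\mathcal N_k$, $\mathcal N_\omega$, $\mathcal N_3^2$ or $\mathcal N_3^c$) cannot contain an idempotent atom, the commutative variety $\mathcal{SL}$ is incomparable with the non-commutative $\mathcal{LZ}$ and $\mathcal{RZ}$ and the latter two are incomparable with each other, and no chain variety consisting of groups can lie above a non-group atom, since every subvariety of a group variety consists of groups. This rules out any chain variety strictly above $\mathcal{SL}$, $\mathcal{LZ}$ or $\mathcal{RZ}$, and I expect this maximality verification to be the only genuine obstacle; everything else is routine bookkeeping against the membership lists of Lemmas~\ref{atoms} and~\ref{neutral}.

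Assembling the two partitions yields all four formulas at once. I would define $\mathcal{SL}$ by $\mathtt A(x)\,\&\,\mathtt{Neut}(x)\,\&\,\max\nolimits_x\{\mathtt{Ch}(x)\}$ and $\mathcal{ZM}$ by $\mathtt A(x)\,\&\,\mathtt{Neut}(x)\,\&\,\neg\max\nolimits_x\{\mathtt{Ch}(x)\}$; the set $\{\mathcal A_p\mid p\text{ prime}\}$ by $\mathtt A(x)\,\&\,\neg\mathtt{Neut}(x)\,\&\,\neg\max\nolimits_x\{\mathtt{Ch}(x)\}$; and the set $\{\mathcal{LZ},\mathcal{RZ}\}$ by $\mathtt A(x)\,\&\,\neg\mathtt{Neut}(x)\,\&\,\max\nolimits_x\{\mathtt{Ch}(x)\}$. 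Since $\mathcal{LZ}$ and $\mathcal{RZ}$ are distinct and mutually dual, this last formula establishes their definability up to duality, completing the proposition.
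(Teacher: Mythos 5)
Your proof is correct and takes essentially the same route as the paper: both arguments partition the atoms first by neutrality (via Lemmas \ref{atoms} and \ref{neutral}) and then by whether some chain variety lies strictly above the atom, and since every atom is itself a chain variety your predicate $\max\nolimits_x\bigl\{\mathtt{Ch}(x)\bigr\}$ is equivalent to the paper's condition $(\forall y)\,\bigl(\mathtt{Ch}(y)\,\&\,x\le y\longrightarrow x=y\bigr)$. The only minor difference is that you verify the maximality of $\mathcal{SL}$, $\mathcal{LZ}$, $\mathcal{RZ}$ explicitly from Lemma \ref{chain var} together with the remark that group varieties have only group subvarieties, whereas the paper simply points to Fig.~\ref{chain varieties}.
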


\begin{proof}
By Lemma \ref{atoms}, all varieties mentioned in the proposition are atoms of
\textbf{SEM}. By Lemma \ref{neutral}, the varieties $\mathcal{SL}$ and
$\mathcal{ZM}$ are neutral elements in \textbf{SEM}, while $\mathcal{LZ}$,
$\mathcal{RZ}$ and $\mathcal A_p$ are not. Fig.\ \ref{chain varieties} shows
that the varieties $\mathcal{ZM}$ and $\mathcal A_p$ are proper subvarieties
of some chain varieties, while $\mathcal{SL}$, $\mathcal{LZ}$ and $\mathcal
{RZ}$ are not. Therefore the formulas
\begin{align*}
&\mathtt{SL}(x)\;\rightleftharpoons\;\mathtt A(x)\,\&\,\mathtt{Neut}(x)\,\&\,
(\forall y)\,\bigl(\mathtt{Ch}(y)\,\&\,x\le y\longrightarrow x=y
\bigr),\\
&\mathtt{ZM}(x)\;\rightleftharpoons\;\mathtt A(x)\,\&\,\mathtt{Neut}(x)\,\&\,
(\exists y)\,\bigl(\mathtt{Ch}(y)\,\&\,x<y\bigr)
\end{align*}
define the varieties $\mathcal{SL}$ and $\mathcal{ZM}$ respectively, while
the the formulas
\begin{align*}
&\mathtt{LZ\text{-}and\text{-}RZ}(x)\;\rightleftharpoons\;\mathtt A(x)\,\&\,
\neg\mathtt{Neut}(x)\,\&\,(\forall y)\,\bigl(\mathtt{Ch}(y)\,\&\,x\le y
\longrightarrow x=y\bigr),\\
&\mathtt{GrA}(x)\;\rightleftharpoons\;\mathtt A(x)\,\&\,\neg\mathtt{Neut}
(x)\,\&\,(\exists y)\,\bigl(\mathtt{Ch}(y)\,\&\,x<y\bigr)
\end{align*}
define the sets $\{\mathcal{LZ,RZ}\}$ and $\{\mathcal A_p\mid p$ is a prime
number$\}$ respectively.
\end{proof}

Note that in fact each of the group atoms $\mathcal A_p$ is individually
definable (see Theorem \ref{A_n} below). The definability of the varieties
$\mathcal{SL}$ and $\mathcal{ZM}$ and the definability up to duality of the
varieties $\mathcal{LZ}$ and $\mathcal{RZ}$ are mentioned in \cite
{Jezek-McKenzie-93} (see Theorem 1.11 and Lemma 4.3 there) without any
explicitly written formulas. Note also that the variety $\mathcal{ZM}$ can be
defined by several different ways. For the sake of completeness, one can
provide one more of them. It is a common knowledge that a semigroup variety
is completely regular [a nil-variety] if and only if it does not contain the
variety $\mathcal{ZM}$ [any atom of the lattice \textbf{SEM} except $\mathcal
{ZM}$]. The lattice of completely regular semigroup varieties is modular (see
\cite{Pastijn-90,Pastijn-91,Petrich-Reilly-90} or Section 6 in \cite
{Shevrin-Vernikov-Volkov-09}). In contrast, the lattice of nil-varieties does
not satisfy any non-trivial lattice identity \cite
{Burris-Nelson-71-infinite}. Combining these observations, we see that the
variety $\mathcal{ZM}$ can be defined by the formula
$$\mathtt{ZM'}(x)\;\rightleftharpoons\;\mathtt A(x)\,\&\,(\forall y,z,t)\,
\bigl(y,z,t\ngeq x\,\&\,z\le t\longrightarrow(y\vee z)\wedge t=(y\wedge t)
\vee z\bigr)\ldotp$$

Put $\mathcal{COM}=\var\,\{xy=yx\}$. As an immediate consequence of Lemma
\ref{definable sets of atoms}, we have the following

\begin{proposition}
\label{COM}
The variety $\mathcal{COM}$ is definable.
\end{proposition}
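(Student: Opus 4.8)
The plan is to realize $\mathcal{COM}$ as the maximum of a first-order-definable set, using the fact that $\mathcal{COM}$ is the largest commutative variety. The natural route is through forbidden subvarieties: a variety is commutative if and only if it contains none of the \emph{minimal} non-commutative varieties, and consequently $\mathcal{COM}$ is the unique largest variety containing none of them (it is commutative, hence contains none, and conversely any variety containing none must be commutative and so lie in $\mathcal{COM}$). The structural input I would invoke is the classification of these minimal non-commutative varieties: besides $\mathcal{LZ}$ and $\mathcal{RZ}$, which by Proposition~\ref{definable sets of atoms} already form a definable set, the list comprises certain nil-varieties sitting just above the atom $\mathcal{ZM}$ (for instance $\var\{x^2=xyz=0\}$ and its dual) together with small non-abelian group varieties sitting just above the atoms $\mathcal A_p$. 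Each obstruction can be located relative to the \emph{definable} atoms $\mathcal{ZM}$ and $\mathcal A_p$ supplied by Proposition~\ref{definable sets of atoms}.

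Granting a formula $\mathtt M(y)$ that defines this set of minimal non-commutative varieties, the variety is then defined by
$$\mathtt{COM}(x)\;\rightleftharpoons\;\max\nolimits_x\bigl\{(\forall y)\,\bigl(\mathtt M(y)\longrightarrow y\nleq x\bigr)\bigr\}\ldotp$$
To assemble $\mathtt M(y)$ I would anchor each obstruction at a definable atom and describe it in purely lattice terms: the nil-obstructions as the non-commutative varieties immediately above $\mathcal{ZM}$ that lie above no other atom, and the group-obstructions as the minimal varieties strictly above some $\mathcal A_p$ that lie above no non-group atom. Here I would use $\mathtt A$ to recognize atoms, $\mathtt{Neut}$ and $\mathtt{Ch}$ to isolate the commutative atoms, and the formulas $\mathtt{ZM}$ and $\mathtt{GrA}$ already established in Proposition~\ref{definable sets of atoms} to name $\mathcal{ZM}$ and the family $\{\mathcal A_p\}$. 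The self-duality of $\mathcal{COM}$ (the identity $xy=yx$ is self-dual) is consistent with full definability rather than mere definability up to duality, so no duality caveat is needed.

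The main obstacle is producing $\mathtt M(y)$ honestly and uniformly, and it has two faces. First, one must prove the forbidden-subvariety characterization of commutativity — that the varieties listed really are all the minimal non-commutative varieties; this is a genuine classification result, not a lattice triviality. Second, and harder from the first-order standpoint, one must capture the infinite group-theoretic part by a single formula: there is essentially one such obstruction for each prime, and these include \emph{non-chain} varieties, for example the variety generated by a non-abelian group of order $pq$, whose proper subvarieties include the incomparable $\mathcal A_p$ and $\mathcal A_q$. Hence the exclusion of the group-obstructions cannot be carried out variety by variety but must be phrased through the definable family $\{\mathcal A_p\}$ as a whole. Once $\mathtt M$ is available, verifying that the maximum in $\mathtt{COM}(x)$ exists and equals $\mathcal{COM}$ is routine, precisely because $\mathcal{COM}$ is itself commutative and contains every commutative variety.
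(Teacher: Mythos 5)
Your strategy is not the paper's, and as written it has genuine gaps. The paper defines $\mathcal{COM}$ from below, not from above: since $\mathcal{COM}$ is the join of the group atoms $\mathcal A_p$ over all primes $p$ (a classical fact, cited to \cite{Evans-71}), and since the set $\{\mathcal A_p\mid p\ \text{prime}\}$ is already defined by the formula $\mathtt{GrA}$ from Proposition \ref{definable sets of atoms}, the one-line formula $\min\nolimits_x\bigl\{(\forall y)\,\bigl(\mathtt{GrA}(y)\longrightarrow y\le x\bigr)\bigr\}$ finishes the proof with no further structure theory. Your route through minimal non-commutative varieties requires two substantial inputs that you do not supply: (a) that every non-commutative variety contains a minimal non-commutative one --- this is not a lattice triviality, since \textbf{SEM} has no descending chain condition, and it amounts to a classification theorem that you neither prove nor cite; and (b) an actual first-order formula $\mathtt M(y)$ picking out all the obstructions. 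You acknowledge both obstacles, but acknowledging them does not close them; the proposal stops exactly where the work would begin.

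Moreover, the one concrete recipe you do offer for the group-theoretic obstructions is incorrect: ``the minimal varieties strictly above some $\mathcal A_p$ that lie above no non-group atom'' captures the Abelian varieties $\mathcal A_{p^2}$ and $\mathcal A_{pq}$, which are not obstructions to commutativity at all. Repairing this requires saying ``minimal \emph{non-Abelian}'', i.e.\ ``minimal non-commutative'', which is circular at this stage --- indeed, when the paper later needs the minimal non-Abelian periodic group varieties (the formula $\mathtt{JNAb}$ in the proof of Theorem \ref{permut}), it defines them via $\mathtt{Com}$, which itself presupposes that $\mathcal{COM}$ has already been defined. The dependency thus runs in the opposite direction from what your plan needs. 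I recommend abandoning the forbidden-subvariety approach here and using the join-of-atoms characterization instead.
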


\begin{proof}
It is well known that the join of the varieties $\mathcal A_p$ where $p$ runs
over the set of all prime numbers coincides with the variety $\mathcal{COM}$
(see \cite{Evans-71}, for instance). Therefore the formula
$$\mathtt{COM}(x)\;\rightleftharpoons\;\min\nolimits_x\bigl\{(\forall y)\,
\bigl(\mathtt{GrA}(y)\longrightarrow y\le x\bigr)\bigr\}$$
defines the variety $\mathcal{COM}$.
\end{proof}

The following general fact will be used in what follows.

\begin{lemma}
\label{chain set}
If a countably infinite subset $S$ of a lattice $L$ is definable in $L$ and
forms a chain isomorphic to the chain of natural numbers under the order
relation in $L$ then every member of this set is definable in $L$.
\end{lemma}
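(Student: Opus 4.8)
The plan is to peel off the elements of the chain one at a time, starting from the bottom, using only the formula $\Sigma(x)$ that defines $S$ in $L$ together with the abbreviation $\min_x$ introduced in the Introduction. Fix such a $\Sigma$ and write $S=\{s_0<s_1<s_2<\cdots\}$ according to the order-isomorphism of $S$ with the chain of natural numbers; this enumeration exists precisely because the order type is $\omega$, so that $S$ has a least element and every element has an immediate successor in $S$. The least element $s_0$ is the unique minimal element of the chain $S$, and hence the formula $\Sigma_0(x)\rightleftharpoons\min_x\{\Sigma(x)\}$ defines $s_0$.

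To continue, I would manufacture a formula for $s_{n+1}$ out of one for $s_n$. Suppose inductively that a formula $\Sigma_n(x)$ defining $s_n$ has been obtained, and put
$$\Sigma_{n+1}(x)\;\rightleftharpoons\;\min\nolimits_x\bigl\{\Sigma(x)\,\&\,(\exists y)\,\bigl(\Sigma_n(y)\,\&\,y<x\bigr)\bigr\}\ldotp$$
Because $\Sigma_n$ defines $s_n$ uniquely, the subformula $(\exists y)(\Sigma_n(y)\,\&\,y<x)$ is satisfied by exactly those $x$ with $s_n<x$. Thus the condition inside the braces cuts out the set $\{a\in S\mid s_n<a\}=\{s_{n+1},s_{n+2},\ldots\}$, which is again a chain of order type $\omega$; its unique minimal element is $s_{n+1}$, so $\Sigma_{n+1}$ defines $s_{n+1}$. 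By induction every $s_n$ is definable, and since $S=\{s_n\mid n\ge 0\}$, every member of $S$ is definable.

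The only point that genuinely requires the hypotheses — and the step I would watch most carefully — is that each $\Sigma_n$ be a legitimate finite first-order formula. This is harmless here: for each fixed $n$ the formula $\Sigma_n$ arises from $\Sigma$ by $n$ applications of one and the same fixed construction, so it has finite length. There is deliberately no claim of a single formula uniform in $n$ (which would amount to counting predecessors and is not first-order expressible); rather, each element receives its own formula, which is exactly what the statement asks for. Finally, the assumption that the order type is precisely $\omega$, and not merely that $S$ is an infinite chain, is what guarantees that every tail $\{a\in S\mid s_n<a\}$ has a least element, so that each use of $\min_x$ returns a single element rather than a larger set of minimal elements.
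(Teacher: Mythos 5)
Your proposal is correct and follows essentially the same argument as the paper: induction on the position in the chain, with the base case handled by $\min_x\bigl\{\Sigma(x)\bigr\}$ and the inductive step by exactly the formula $\min_x\bigl\{\Sigma(x)\,\&\,(\exists y)\,\bigl(\Sigma_n(y)\,\&\,y<x\bigr)\bigr\}$ that the paper uses. Your additional remarks on the finiteness of each formula and on why the order type $\omega$ guarantees a unique minimal element at each stage are accurate but not needed beyond what the paper already records.
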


\begin{proof}
Let $S=\{s_n\mid n\in\mathbb N\}$, $s_1<s_2<\cdots<s_n<\cdots$ and let $\Phi
(x)$ be the formula defining $S$ in $L$. We are going to prove the
definability of the element $s_n$ for each $n$ by induction on $n$. The
induction base is evident because the element $s_1$ is definable by the
formula $\min_x\bigl\{\Phi(x)\bigr\}$. Assume now that $n>1$ and the element
$s_{n-1}$ is definable by some formula $\Psi(x)$. Then the formula
$$\min\nolimits_x\bigl\{\Phi(x)\,\&\,(\exists y)\,\bigl(\Psi(y)\,\&\,y<x\bigr
)\bigr\}$$
defines the element $s_n$.
\end{proof}

The fact that the variety $\mathcal{ZM}$ is definable is a partial case of
the following

\begin{proposition}
\label{nil chain}
Every chain nil-variety of semigroups is definable.
\end{proposition}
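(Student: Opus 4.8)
The plan is to first determine exactly which varieties are at stake. Among all chain varieties---the group ones and the non-group ones listed in Lemma~\ref{chain var}---the nil-varieties are precisely $\mathcal N_k$ $(k\in\mathbb N)$, $\mathcal N_\omega$, $\mathcal N_3^2$ and $\mathcal N_3^c$: the varieties $\mathcal{SL}$, $\mathcal{LZ}$, $\mathcal{RZ}$ consist of idempotent semigroups and the only nil group variety is $\mathcal T=\mathcal N_1$. So I must define each of $\mathcal N_k$, $\mathcal N_\omega$, $\mathcal N_3^2$ and $\mathcal N_3^c$. The strategy has three stages: cut out all chain nil-varieties by a single formula, isolate the chain $\{\mathcal N_k\}$ and apply Lemma~\ref{chain set}, and then pick out the three maximal varieties $\mathcal N_\omega$, $\mathcal N_3^2$, $\mathcal N_3^c$ one by one.

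For the first stage I would use the quoted characterization of nil-varieties as those containing no atom of $\mathbf{SEM}$ other than $\mathcal{ZM}$; as $\mathcal{ZM}$ is definable (Proposition~\ref{definable sets of atoms}), the formula
$$\mathtt{Nil}(x)\;\rightleftharpoons\;(\forall y)\,\bigl(\mathtt A(y)\,\&\,\neg\mathtt{ZM}(y)\longrightarrow y\nleq x\bigr)$$
defines the nil-varieties, and $\mathtt{Ch}(x)\,\&\,\mathtt{Nil}(x)$ defines all chain nil-varieties. Reading off Fig.~\ref{chain varieties}, the set $\{\mathcal N_k\}$ is exactly the collection of \emph{non-maximal} chain nil-varieties, since each $\mathcal N_k$ lies below $\mathcal N_{k+1}$ while $\mathcal N_\omega$, $\mathcal N_3^2$, $\mathcal N_3^c$ are pairwise incomparable and maximal. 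Hence
$$\mathtt N(x)\;\rightleftharpoons\;\mathtt{Ch}(x)\,\&\,\mathtt{Nil}(x)\,\&\,(\exists y)\,\bigl(\mathtt{Ch}(y)\,\&\,\mathtt{Nil}(y)\,\&\,x<y\bigr)$$
defines $\{\mathcal N_k\}$, a chain order-isomorphic to $\mathbb N$, so Lemma~\ref{chain set} makes every $\mathcal N_k$ definable.

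It remains to handle the three maximal chain nil-varieties. Here $\mathcal N_\omega$ is distinguished as the only one of them lying above every member of $\{\mathcal N_k\}$ (indeed $\mathcal N_k\le\mathcal N_\omega$ for all $k$, whereas $\mathcal N_4\nleq\mathcal N_3^2$ and $\mathcal N_4\nleq\mathcal N_3^c$ because $\mathcal N_4$ violates the identity $xyz=0$ that holds in both), so
$$\mathtt{N_\omega}(x)\;\rightleftharpoons\;\max\nolimits_x\{\mathtt{Ch}(x)\,\&\,\mathtt{Nil}(x)\}\,\&\,(\forall y)\,\bigl(\mathtt N(y)\longrightarrow y\le x\bigr)$$
defines $\mathcal N_\omega$. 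The remaining two are the maximal chain nil-varieties that are \emph{not} upper bounds of $\{\mathcal N_k\}$, and they are told apart by commutativity: $\mathcal N_3^c$ is commutative while $\mathcal N_3^2$ is not. Since $\mathcal{COM}$ is definable (Proposition~\ref{COM}), the condition $x\le\mathcal{COM}$ is expressible as $(\exists z)\bigl(\mathtt{COM}(z)\,\&\,x\le z\bigr)$, and adjoining this condition, respectively its negation, to ``$x$ is a maximal chain nil-variety that is not an upper bound of $\{\mathcal N_k\}$'' defines $\mathcal N_3^c$, respectively $\mathcal N_3^2$. All three varieties are self-dual, so definability is not obstructed.

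The main obstacle is the order-theoretic bookkeeping underlying the separating formulas: one must verify that $\{\mathcal N_k\}$ is precisely the set of non-maximal chain nil-varieties and that $\mathcal N_\omega$, $\mathcal N_3^2$, $\mathcal N_3^c$ are pairwise incomparable with the stated relations to $\{\mathcal N_k\}$ and to $\mathcal{COM}$. These facts are read from Fig.~\ref{chain varieties} or checked directly from the defining identities---$\mathcal N_\omega\nleq\mathcal N_3^2$ since $\mathcal N_\omega$ has non-zero products of length three, $\mathcal N_3^2\nleq\mathcal{COM}$ since it is non-commutative, $\mathcal N_3^c\nleq\mathcal N_\omega$ since it lacks the law $x^2=0$---and while none is deep, all are needed for the formulas to be correct.
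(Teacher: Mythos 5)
Your proposal is correct and follows essentially the same route as the paper: use Lemma~\ref{chain var} to enumerate the chain nil-varieties, define the chain $\{\mathcal N_k\}$ and invoke Lemma~\ref{chain set}, then separate $\mathcal N_\omega$, $\mathcal N_3^2$ and $\mathcal N_3^c$ by their order-theoretic position and by commutativity. The only differences are cosmetic choices of defining formulas (the paper pins down $\mathcal N_\omega$ first and takes $\{\mathcal N_k\}$ to be its proper subvarieties, whereas you take the non-maximal chain nil-varieties), and both versions check out.
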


\begin{proof}
All considerations here are based on Lemma \ref{chain var} and Fig.\ \ref
{chain varieties}. The variety $\mathcal N_\omega$ is defined by the formula
$$\mathtt{N_\omega}(x)\;\rightleftharpoons\;\max\nolimits_x\bigl\{\mathtt{Ch}
(x)\,\&\,(\exists y,z,t)\,\bigl(\mathtt{ZM}(y)\,\&\,y<z<t<x\bigr)\bigr\}
\ldotp$$
The formula
$$\mathtt{All\text{-}N_k}(x)\;\rightleftharpoons\;(\exists y)\,\bigl(\mathtt
{N_\omega}(y)\,\&\,x<y\bigr)$$
defines the set of varieties $\{\mathcal N_k\mid k\in\mathbb N\}$. Now Lemma
\ref{chain set} successfully applies with the conclusion that the variety
$\mathcal N_k$ is definable for each $k$. It remains to verify the
definability of the varieties $\mathcal N_3^c$ and $\mathcal N_3^2$. Both
these varieties (and only they) are chain varieties that contains $\mathcal
{ZM}$ and are not contained in $\mathcal N_\omega$; besides that the variety
$\mathcal N_3^c$ is commutative, while the variety $\mathcal N_3^c$ is not.
Therefore the formulas
\begin{align*}
&\mathtt{N_3^c}(x)\;\rightleftharpoons\;\mathtt{Ch}(x)\,\&\,(\exists y,z,
t)\,\bigl(\mathtt{ZM}(y)\,\&\,y\le x\,\&\,\mathtt{N_\omega}(z)\,\&\,x\nleq
z\,\&\,\mathtt{COM}(t)\,\&\,x\le t\bigr),\\
&\mathtt{N_3^2}(x)\;\rightleftharpoons\;\mathtt{Ch}(x)\,\&\,(\exists y,z,
t)\,\bigl(\mathtt{ZM}(y)\,\&\,y\le x\,\&\,\mathtt{N_\omega}(z)\,\&\,x\nleq
z\,\&\,\mathtt{COM}(t)\,\&\,x\nleq t\bigr)
\end{align*}
define the varieties $\mathcal N_3^c$ and $\mathcal N_3^2$ respectively.
\end{proof}

Note that every chain Abelian group variety $\mathcal A_{p^k}$ also is
definable (see Theorem \ref{A_n} below).

\section{Main sublattices of the lattice \textbf{SEM}}
\label{sublattices}

The lattice \textbf{SEM} contains a number of wide and important sublattices
(see Section 1 and Chapter 2 in \cite{Shevrin-Vernikov-Volkov-09}). In this
section we aim to show that many of these sublattices are definable (as sets
of varieties).

A semigroup variety is called \emph{overcommutative} if it contains the
variety $\mathcal{COM}$. It is a common knowledge that every semigroup
variety is either overcommutative or periodic. Thus the lattice \textbf{SEM}
is the disjoint union of two big sublattices: the lattice of all periodic
varieties and the lattice of all overcommutative varieties. One more
important sublattice of \textbf{SEM} is the lattice of all commutative
varieties. It is evident that the formulas
\begin{align*}
&\mathtt{Per}(x)\;\rightleftharpoons\;(\exists y)\,\bigl(\mathtt{COM}(y)\,
\&\,y\nleq x\bigr),\\
&\mathtt{OC}(x)\;\rightleftharpoons\;\bigl(\forall y)\,(\mathtt{COM}(y)
\longrightarrow y\le x\bigr),\\
&\mathtt{Com}(x)\;\rightleftharpoons\;(\forall y)\,\bigl(\mathtt{COM}(y)
\longrightarrow x\le y\bigr)
\end{align*}
define the sets of all periodic varieties, all overcommutative varieties and
all commutative varieties respectively. Thus we have the following

\begin{theorem}
\label{Per,OC,Com}
The sets of all periodic varieties, all overcommutative varieties and all
commutative varieties of semigroups are definable.\qed
\end{theorem}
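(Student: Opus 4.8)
The plan is to reduce each of the three sets to a single order relation against the already-definable variety $\mathcal{COM}$. By Proposition~\ref{COM} the formula $\mathtt{COM}(x)$ is true of $a$ in $\mathbf{SEM}$ if and only if $a=\mathcal{COM}$. Hence any clause of the shape $(\exists y)\bigl(\mathtt{COM}(y)\,\&\,\Theta(x,y)\bigr)$ or $(\forall y)\bigl(\mathtt{COM}(y)\longrightarrow\Theta(x,y)\bigr)$ collapses to $\Theta(x,\mathcal{COM})$, so it suffices to express ``commutative'', ``overcommutative'' and ``periodic'' as lattice-order conditions linking $x$ with $\mathcal{COM}$.

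First I would treat the commutative case, which is the purely definitional one: a variety is commutative exactly when it satisfies $xy=yx$, that is, when it is a subvariety of $\mathcal{COM}=\var\,\{xy=yx\}$. Thus ``commutative'' means ``$\le\mathcal{COM}$'', and the set of commutative varieties is defined by $\mathtt{Com}(x)\rightleftharpoons(\forall y)\bigl(\mathtt{COM}(y)\longrightarrow x\le y\bigr)$.

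Next I would dispose of the overcommutative and periodic cases together. By definition a variety is overcommutative precisely when it contains $\mathcal{COM}$, i.e.\ when $\mathcal{COM}\le x$; this gives $\mathtt{OC}(x)\rightleftharpoons(\forall y)\bigl(\mathtt{COM}(y)\longrightarrow y\le x\bigr)$. To obtain the periodic varieties I would invoke the dichotomy, exclusive and exhaustive, that every semigroup variety is either overcommutative or periodic. It identifies the periodic varieties with the non-overcommutative ones, i.e.\ with those $x$ satisfying $\mathcal{COM}\nleq x$, captured by $\mathtt{Per}(x)\rightleftharpoons(\exists y)\bigl(\mathtt{COM}(y)\,\&\,y\nleq x\bigr)$.

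The only ingredient that is not a routine translation of an order relation into lattice logic is this overcommutative-or-periodic dichotomy; it is a genuine structural fact about semigroup varieties (both exclusivity and exhaustiveness are needed so that $\mathcal{COM}\nleq x$ matches periodicity in both directions), and I would simply quote it from the structure theory as done above. Everything else rests on the definability of $\mathcal{COM}$ from Proposition~\ref{COM}, so I expect no real obstacle beyond citing that dichotomy correctly.
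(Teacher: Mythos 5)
Your proposal is correct and coincides with the paper's argument: the paper uses exactly the same three formulas $\mathtt{Per}(x)$, $\mathtt{OC}(x)$, $\mathtt{Com}(x)$, resting on Proposition~\ref{COM} and the same overcommutative-or-periodic dichotomy. No differences to report.
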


As we have already mentioned in Section \ref{atoms and chain}, many important
sets of semigroup varieties admit a characterization in the language of atoms
of the lattice \textbf{SEM}. Several facts of such a type are summarised in
Table \ref{sets}; all these facts are verified in \cite{Aizenshtat-74}.

\begin{table}[tbh]
\begin{center}
\small
\begin{tabular}{|c|c|}
\hline
A semigroup variety is&if and only if it does not contain the varieties\\
\hline
\protect\rule{0pt}{10pt}a completely regular variety&$\mathcal{ZM}$\\
\hline
\protect\rule{0pt}{10pt}a completely simple variety&$\mathcal{SL,ZM}$\\
\hline
\protect\rule{0pt}{10pt}a periodic group variety&$\mathcal{LZ,RZ,SL,ZM}$\\
\hline
\protect\rule{0pt}{10pt}a combinatorial variety&$\mathcal A_p$ for all prime
$p$\\
\hline
\protect\rule{0pt}{10pt}a variety of idempotent semigroups&$\mathcal{ZM}$ and
$\mathcal A_p$ for all prime $p$\\
\hline
\protect\rule{0pt}{10pt}a nil-variety&$\mathcal{LZ,RZ,SL}$ and $\mathcal A_p$
for all prime $p$\\
\hline
\end{tabular}
\caption{A characterization of some sets of varieties\protect\rule{0pt}
{11pt}}
\label{sets}
\end{center}
\end{table}

Varieties of all types mentioned in Table \ref{sets} form sublattices in
\textbf{SEM}. Combining the facts from Table \ref{sets} with Proposition \ref
{definable sets of atoms}, we have the following

\begin{theorem}
\label{definable sets}
The sets of all completely regular varieties, all completely simple
varieties, all periodic group varieties, all combinatorial varieties, all
varieties of idempotent semigroups, and all nil-varieties of semigroups are
definable.\qed
\end{theorem}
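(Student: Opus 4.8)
The plan is to exploit the characterizations collected in Table~\ref{sets}, each of which expresses membership in one of the six sublattices purely in terms of non-containment of certain atoms of \textbf{SEM}. Since Proposition~\ref{definable sets of atoms} already gives us first-order access to the relevant atoms, the task reduces to translating each row of the table into a formula. First I would record the formulas $\mathtt{SL}(x)$, $\mathtt{ZM}(x)$ and $\mathtt{GrA}(x)$ (defining $\mathcal{SL}$, $\mathcal{ZM}$ and the set of all group atoms $\mathcal A_p$) from Proposition~\ref{definable sets of atoms}, together with $\mathtt{LZ\text{-}and\text{-}RZ}(x)$, which defines the two-element set $\{\mathcal{LZ},\mathcal{RZ}\}$. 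These are the only building blocks I need.

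The key observation is that ``the variety $x$ does not contain $\mathcal V$'' becomes, when $\mathcal V$ is captured by a formula $\mathtt V(y)$, simply $(\forall y)\,\bigl(\mathtt V(y)\longrightarrow y\nleq x\bigr)$; and when $\mathcal V$ ranges over a definable set, the same quantifier over $y$ handles the whole family at once. Thus, reading off the rows, I would write
\begin{align*}
&\mathtt{CR}(x)\;\rightleftharpoons\;(\forall y)\,\bigl(\mathtt{ZM}(y)\longrightarrow y\nleq x\bigr),\\
&\mathtt{CS}(x)\;\rightleftharpoons\;(\forall y)\,\bigl(\mathtt{SL}(y)\ \text{\textsc{or}}\ \mathtt{ZM}(y)\longrightarrow y\nleq x\bigr),\\
&\mathtt{Gr}(x)\;\rightleftharpoons\;(\forall y)\,\bigl(\mathtt{SL}(y)\ \text{\textsc{or}}\ \mathtt{ZM}(y)\ \text{\textsc{or}}\ \mathtt{LZ\text{-}and\text{-}RZ}(y)\longrightarrow y\nleq x\bigr),\\
&\mathtt{Comb}(x)\;\rightleftharpoons\;(\forall y)\,\bigl(\mathtt{GrA}(y)\longrightarrow y\nleq x\bigr),\\
&\mathtt{Idem}(x)\;\rightleftharpoons\;(\forall y)\,\bigl(\mathtt{ZM}(y)\ \text{\textsc{or}}\ \mathtt{GrA}(y)\longrightarrow y\nleq x\bigr),\\
&\mathtt{Nil}(x)\;\rightleftharpoons\;(\forall y)\,\bigl(\mathtt{SL}(y)\ \text{\textsc{or}}\ \mathtt{LZ\text{-}and\text{-}RZ}(y)\ \text{\textsc{or}}\ \mathtt{GrA}(y)\longrightarrow y\nleq x\bigr).
\end{align*}
By Table~\ref{sets} each formula holds of a variety $\mathcal X$ precisely when $\mathcal X$ avoids the prescribed atoms, which is exactly the defining property of the corresponding sublattice; hence these six formulas define the six sets in question.

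Two small points need care, but neither is a genuine obstacle. One is the handling of the subvariety $\{\mathcal{LZ},\mathcal{RZ}\}$: the formula $\mathtt{LZ\text{-}and\text{-}RZ}(y)$ is satisfied by \emph{both} of these atoms, so the quantifier $(\forall y)$ automatically forces non-containment of each of $\mathcal{LZ}$ and $\mathcal{RZ}$, which is what the ``periodic group'' and ``nil-variety'' rows demand; thus we never need to name $\mathcal{LZ}$ or $\mathcal{RZ}$ individually (which would be impossible, as they are not definable). The other is the sub\-stitution of a defining formula for an atom in place of a constant; since $\mathtt{GrA}(y)$ defines the \emph{whole} set $\{\mathcal A_p\mid p\ \text{prime}\}$ rather than a single variety, the universally quantified implication correctly expresses that $x$ contains \emph{no} $\mathcal A_p$, matching the ``combinatorial'' and related rows. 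The only thing to verify is that these routine translations faithfully reproduce the biconditionals of the table, and this is immediate once one notes that each row is already phrased as a non-containment condition on atoms all of which are definable. Consequently all six sets are definable, which is the assertion of the theorem.
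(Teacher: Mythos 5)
Your proposal is correct and follows essentially the same route as the paper: it combines the atom characterizations of Table~\ref{sets} with the formulas from Proposition~\ref{definable sets of atoms}, and your formulas are logically equivalent to the ones the paper records in Table~\ref{formulas for sets} (the paper restricts the quantifier to atoms below $x$ and negates, whereas you quantify over the named atoms and assert non-containment, which amounts to the same thing since $\mathtt{SL}$, $\mathtt{ZM}$, $\mathtt{GrA}$ and $\mathtt{LZ\text{-}and\text{-}RZ}$ already entail $\mathtt A$).
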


Note that the definability of the set of all nil-varieties was mentioned in
\cite{Vernikov-07}. For convenience of references, we write in Table \ref
{formulas for sets} formulas defining the sets of varieties listed in Theorem
\ref{definable sets}.

\begin{table}[tbh]
\begin{center}
\small
\tabcolsep=3pt
\begin{tabular}{|c|c|}
\hline
The set of all&is defined by the formula\\
\hline
\protect\rule{0pt}{10pt}completely regular varieties&$\mathtt{CR}(x)\;
\rightleftharpoons\;(\forall y)\,\bigl(\mathtt A(y)\,\&\,y\le x
\longrightarrow\neg\mathtt{ZM}(y)\bigr)$\\
\hline
\protect\rule{0pt}{10pt}completely simple varieties&$\mathtt{CS}(x)\;
\rightleftharpoons\;(\forall y)\,\bigl(\mathtt A(y)\,\&\,y\le x
\longrightarrow\neg\mathtt{SL}(y)\,\&\,\neg\mathtt{ZM}(y)\bigr)$\\
\hline
\protect\rule{0pt}{10pt}periodic group varieties&$\mathtt{Gr}(x)\;
\rightleftharpoons\;(\forall y)\,\bigl(\mathtt A(y)\,\&\,y\le x
\longrightarrow\mathtt{GrA}(y)\bigr)$\\
\hline
\protect\rule{0pt}{10pt}combinatorial varieties&$\mathtt{Comb}(x)\;
\rightleftharpoons\;(\forall y)\,\bigl(\mathtt A(y)\,\&\,y\le x
\longrightarrow\neg\mathtt{GrA}(y)\bigr)$\\
\hline
\protect\rule{0pt}{10pt}varieties of idempotent&$\mathtt{Idemp}(x)\;
\rightleftharpoons\;(\forall y)\,\bigl(\mathtt A(y)\,\&\,y\le x
\longrightarrow\neg\mathtt{GrA}(y)\,\&\,\neg\mathtt{ZM}(y)\bigr)$\\
\protect\rule{0pt}{10pt}semigroups&\\
\hline
\protect\rule{0pt}{10pt}nil-varieties&$\mathtt{Nil}(x)\;\rightleftharpoons\;
(\forall y)\,\bigl(\mathtt A(y)\,\&\,y\le x\longrightarrow\mathtt{ZM}(y)\bigr
)$\\
\hline
\end{tabular}
\caption{Formulas defining some sets of varieties\protect\rule{0pt}{11pt}}
\label{formulas for sets}
\end{center}
\end{table}

One more interesting sublattice of the lattice \textbf{SEM} is the lattice of
all 0-reduced varieties.

\begin{theorem}
\label{0-red}
The set of all \textup0-reduced semigroup varieties is definable.
\end{theorem}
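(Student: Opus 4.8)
The plan is to peel off the easy part first and isolate the genuinely 0-reduced content. Every non-trivial 0-reduced variety $\var\{w_i=0\}$ satisfies $x^k=0$ (substitute a single letter for all letters of some $w_i$), so it is a nil-variety; hence the desired formula will have the shape $\mathtt{Nil}(x)\,\&\,\Theta(x)$, and I may quantify over nil-varieties using the already available formula $\mathtt{Nil}$. The role of $\Theta$ is to separate the 0-reduced nil-varieties from the remaining ones, and the structural mechanism I would use is this: the 0-reduced varieties are closed under arbitrary intersections, and for every nil-variety $\mathcal V$ there is a largest nil-variety $\mathcal V^{\circ}$ having the same set of zero words as $\mathcal V$, namely $\mathcal V^{\circ}=\var\{w=0\mid\mathcal V\models w=0\}$; this $\mathcal V^{\circ}$ is 0-reduced, and a nil-variety is 0-reduced if and only if $\mathcal V=\mathcal V^{\circ}$. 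For instance $\mathcal N_3=\mathcal N_3^2\wedge\mathcal N_3^c$ is not 0-reduced precisely because $\mathcal N_3^{\circ}=\mathcal N_3^2\supsetneq\mathcal N_3$, whereas $\mathcal N_3^2$ and $\mathcal{ZM}$ are fixed by the closure.

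The next step is to render ``$\mathcal V=\mathcal V^{\circ}$'' lattice-theoretically. Writing $\mathcal Z_w=\var\{w=0\}$ for the principal 0-reduced varieties, one has $\mathcal V^{\circ}=\bigwedge\{\mathcal Z_w\mid\mathcal V\le\mathcal Z_w\}$, so I want a formula $\Psi(y)$ defining a family of 0-reduced varieties (the $\mathcal Z_w$, or equally well a suitable family of meet-irreducible 0-reduced varieties) with the property that every 0-reduced variety is the meet of the members of the family lying above it. Granting such a $\Psi$, the statement ``$x$ is the greatest lower bound of $\{y\mid\Psi(y)\,\&\,x\le y\}$'' is first-order, and the formula
$$\mathtt{0Red}(x)\;\rightleftharpoons\;\mathtt{Nil}(x)\,\&\,(\forall u)\,\bigl[(\forall y)\,\bigl(\Psi(y)\,\&\,x\le y\longrightarrow u\le y\bigr)\longrightarrow u\le x\bigr]$$
defines the 0-reduced varieties: if $x$ equals such a meet then it is an intersection of 0-reduced varieties and hence 0-reduced, while conversely every 0-reduced $x$ is recovered as $x=x^{\circ}=\bigwedge\{\mathcal Z_w\ge x\}$.

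The hard part is clearly the construction of $\Psi$, that is, a \emph{non-circular} lattice-theoretic definition of the building-block 0-reduced varieties, since one cannot simply quantify over ``0-reduced varieties'' while defining that very notion. Here I would feed in the structural theory of nil-varieties, using the correspondence between nil-varieties and fully invariant ideals of the free semigroup: the $\mathcal Z_w$ (respectively the completely meet-irreducible 0-reduced varieties) should be characterized as the completely meet-irreducible nil-varieties---an easily formalized ``unique upper cover'' condition---subject to a further lattice condition expressing rigidity on the nonzero words, i.e.\ that no proper enlargement keeps the same zero set. The two facts requiring real verification, and the step I expect to be the main obstacle, are (i) that such a lattice condition genuinely singles out the right family and (ii) that the meets of this family reproduce exactly the 0-reduced varieties and the closure $\mathcal V^{\circ}$ of an arbitrary nil-variety; both reduce to combinatorial statements about fully invariant ideals rather than to further lattice manipulation.
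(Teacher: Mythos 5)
Your reduction of the problem is sound as far as it goes: every 0-reduced variety is a nil-variety, a nil-variety $\mathcal V$ is 0-reduced if and only if $\mathcal V=\mathcal V^{\circ}=\bigwedge\{\var\{w=0\}\mid\mathcal V\models w=0\}$, and your example $\mathcal N_3^{\circ}=\mathcal N_3^2\supsetneq\mathcal N_3$ is correct. But the proof has a genuine gap at exactly the point you flag yourself: the formula $\Psi(y)$ singling out the building-block 0-reduced varieties $\mathcal Z_w=\var\{w=0\}$ (or a suitable meet-dense subfamily of 0-reduced varieties) is never constructed. Saying that they ``should be characterized as the completely meet-irreducible nil-varieties subject to a further lattice condition expressing rigidity on the nonzero words'' is a description of what $\Psi$ must accomplish, not a lattice-theoretic formula; and without $\Psi$ the outer formula $\mathtt{0Red}(x)$ is vacuous, since ``$x$ is a meet of 0-reduced varieties above it'' is essentially a restatement of the definition of being 0-reduced. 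All of the difficulty of the theorem is concentrated in the step you defer, and it is not at all clear that the $\mathcal Z_w$ are completely meet-irreducible or that any purely order-theoretic ``rigidity'' condition isolates them; these claims would each require a separate combinatorial argument about fully invariant ideals that you do not supply.

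The paper avoids this entire construction by importing a structural characterization from the literature: by a result of Vernikov (reference \cite{Vernikov-07} in the paper), a semigroup variety is 0-reduced if and only if it is a nil-variety \emph{and} a lower-modular element of $\mathbf{SEM}$, where lower-modularity is the first-order condition
$$\mathtt{LMod}(x)\;\rightleftharpoons\;(\forall y,z)\,\bigl(x\le y\longrightarrow x\vee(y\wedge z)=y\wedge(x\vee z)\bigr)\ldotp$$
This turns the theorem into the one-line formula $\mathtt{Nil}(x)\,\&\,\mathtt{LMod}(x)$. If you want to complete your own route, you would need to actually exhibit $\Psi$ and prove the two facts you list as ``requiring real verification''; as it stands, the argument establishes only that the theorem would follow from an unproved definability statement about a family of 0-reduced varieties.
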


\begin{proof}
Put
$$\mathtt{LMod}(x)\;\rightleftharpoons\;(\forall y,z)\,\bigl(x\le y
\longrightarrow x\vee(y\wedge z)=y\wedge(x\vee z)\bigr)\ldotp$$
An element $x$ of a lattice $L$ such that the sentence $\mathtt{LMod}(x)$ is
true is called \emph{lower-modular}. It is verified in \cite{Vernikov-07}
that a semigroup variety is 0-reduced if and only if it is a nil-variety and
a lower-modular element of the lattice \textbf{SEM}. Therefore the formula
$$\mathtt{0\text{-}red}(x)\;\rightleftharpoons\;\mathtt{Nil}(x)\,\&\,\mathtt
{LMod}(x)$$
defines the set of all 0-reduced varieties.
\end{proof}

Theorem \ref{0-red} was verified in \cite{Jezek-McKenzie-93} (see Theorem
1.11 there) without an explicitly written formula defining the set of all
0-reduced varieties. The formula $\mathtt{0\text{-}red}(x)$ is given in \cite
{Vernikov-07}, while some slightly more complex formula defining the set of
all 0-reduced varieties was written earlier in \cite{Volkov-05}.

Note that in Sections \ref{fin deg} and \ref{permutative} we provide several
other definable sublattices of \textbf{SEM}.

\section{Varieties of finite degree}
\label{fin deg}

We call a semigroup variety $\mathcal V$ a \emph{variety of finite degree} if
all nil-semigroups in $\mathcal V$ are nilpotent; $\mathcal V$ is called a
variety of \emph{degree} $k$ if nilpotency degrees of nilsemigroups in
$\mathcal V$ are bounded by the number $k$ and $k$ is the least number with
this property. In this section we show that the set of all semigroup
varieties of finite degree and certain its important subsets and members are
definable.

\begin{theorem}
\label{fin degree}
The sets of all semigroup varieties of finite degree and of all semigroup
varieties of degree $k$ \textup(for an arbitrary natural number $k$\textup)
are definable.
\end{theorem}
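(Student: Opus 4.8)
The plan is to reduce the definability of these sets to properties already shown definable, exploiting the structural characterization of varieties of finite degree in terms of which atoms and small nil-varieties they contain. The key observation is that the notion of ``degree'' is governed by the chain of varieties $\mathcal N_k$ together with $\mathcal N_\omega$: a variety has finite degree precisely when it does not contain $\mathcal N_\omega$ (equivalently, when the nilpotent subvarieties it contains are bounded), and it has degree exactly $k$ when $\mathcal N_k$ lies in it but $\mathcal N_{k+1}$ does not. Since each $\mathcal N_k$ is individually definable by Proposition \ref{nil chain} (via Lemma \ref{chain set}) and $\mathcal N_\omega$ is definable by the formula $\mathtt{N_\omega}(x)$, I would first establish the precise lattice-theoretic criterion for membership in these sets.

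First I would state and justify the structural criterion. A semigroup variety $\mathcal V$ has finite degree if and only if $\mathcal N_\omega\nsubseteq\mathcal V$; this should follow from the fact that $\mathcal N_\omega$ is the variety of all commutative nilsemigroups of nilpotency degree $2$ generated freely without bound, so containing $\mathcal N_\omega$ is exactly the obstruction to bounding nilpotency degrees. Granting this, the set of all varieties of finite degree is defined by the formula
$$\mathtt{FinDeg}(x)\;\rightleftharpoons\;(\forall y)\,\bigl(\mathtt{N_\omega}(y)\longrightarrow y\nleq x\bigr)\ldotp$$
For degree exactly $k$, the criterion should be that $\mathcal V$ has degree $k$ if and only if $\mathcal N_k\subseteq\mathcal V$ but $\mathcal N_{k+1}\nsubseteq\mathcal V$, using that the $\mathcal N_k$ form an increasing chain cofinal among the nilpotent varieties below $\mathcal N_\omega$. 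Letting $\mathtt{N_k}(x)$ denote the formula defining $\mathcal N_k$ supplied by Proposition \ref{nil chain}, the set of all varieties of degree $k$ is then defined by
$$\mathtt{Deg_k}(x)\;\rightleftharpoons\;(\exists y,z)\,\bigl(\mathtt{N_k}(y)\,\&\,y\le x\,\&\,\mathtt{N_{k+1}}(z)\,\&\,z\nleq x\bigr)\ldotp$$

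The main obstacle I anticipate is verifying the structural criterion itself, not the formula-writing, which is routine once the criterion is in hand. Specifically, I must check that containment of $\mathcal N_\omega$ genuinely coincides with having unbounded nilpotency degrees, and that degree $k$ is correctly detected by the pair of containment conditions on $\mathcal N_k$ and $\mathcal N_{k+1}$. This requires the semigroup-theoretic fact that a variety of finite degree $k$ contains exactly those nilpotent varieties of degree at most $k$ among the $\mathcal N_j$, which I would extract from Lemma \ref{chain var} and the relative positions displayed in Fig.\ \ref{chain varieties}; the subtlety is ensuring that the degree is controlled by the \emph{commutative} nilpotent chain $\mathcal N_k$ and is not affected by noncommutative nilpotent varieties, so I would need the fact that the degree of a variety is determined by its commutative nil-subvarieties. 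Once that semigroup-theoretic input is secured, the definability statements follow immediately from the definability of each $\mathcal N_k$ and of $\mathcal N_\omega$ established in the previous section.
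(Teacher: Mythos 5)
Your proposal follows the paper's proof essentially verbatim: the same reduction to the criteria $\mathcal N_\omega\nsubseteq\mathcal V$ for finite degree (which the paper obtains by citing Theorem 2 of \cite{Sapir-Sukhanov-81} rather than re-deriving it) and $\mathcal N_k\subseteq\mathcal V$, $\mathcal N_{k+1}\nsubseteq\mathcal V$ for degree $k$, with the same formula $\mathtt{FinDeg}(x)$ and a formula $\mathtt{Deg_k}(x)$ that is a trivially equivalent existential rephrasing of the paper's universal one. The approach is correct; the only thing to add is the explicit citation for the finite-degree criterion in place of your informal justification.
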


\begin{proof}
According to Theorem 2 of \cite{Sapir-Sukhanov-81}, $\mathcal V$ is a variety
of finite degree if and only if $\mathcal{N_\omega\nsubseteq V}$. It is easy
to see also that $\mathcal V$ is a variety of degree $\le k$ if and only if
$\mathcal N_{k+1}\nsubseteq\mathcal V$, whence $\mathcal V$ is a variety of
degree $k$ if and only if $\mathcal N_{k+1}\nsubseteq\mathcal V$ but
$\mathcal N_k\subseteq\mathcal V$. Therefore the set of all varieties of
finite degree is definable by the formula
$$\mathtt{FinDeg}(x)\;\rightleftharpoons\;(\forall y)\,\bigl(\mathtt
{N_\omega}(y)\longrightarrow y\nleq x\bigr),$$
while the set of all varieties of degree $k$ is defined by the formila
$$\mathtt{Deg_k}(x)\;\rightleftharpoons\;(\forall y,z)\,\bigl(\mathtt N_k
(y)\,\&\,\mathtt N_{k+1}(z)\longrightarrow y\le x\,\&\,z\nleq x\bigr)$$
where $\mathtt N_m(x)$ (for any natural $m$) is the formula that defines the
variety $\mathcal N_m$.
\end{proof}

For a natural number $k$, we put $\mathcal{NILP}_k=\var\,\{x_1x_2\cdots x_k=
0\}$.

\begin{theorem}
\label{nilp}
The set of all nilpotent semigroup varieties is definable. For an arbitrary
natural number $k$, the variety $\mathcal{NILP}_k$ is definable.
\end{theorem}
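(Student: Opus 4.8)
The plan is to handle the two assertions separately, reducing each to formulas already at our disposal. For the set of all nilpotent varieties, I would first record the elementary equivalence that a semigroup variety $\mathcal V$ is nilpotent if and only if it is both a nil-variety and a variety of finite degree. In one direction, if $\mathcal V\subseteq\mathcal{NILP}_k$ then every member of $\mathcal V$ satisfies $x_1x_2\cdots x_k=0$ and is therefore a nil-semigroup, so $\mathcal V$ is a nil-variety, and its members are nilpotent of bounded class, so $\mathcal V$ is of finite degree. In the other direction, in a nil-variety every semigroup is itself a nil-semigroup; if $\mathcal V$ is moreover of finite degree, say of degree $k$, then all of its members are nilpotent of class at most $k$, whence $\mathcal V$ satisfies $x_1x_2\cdots x_k=0$ and $\mathcal V\subseteq\mathcal{NILP}_k$. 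Granting this, the formula
$$\mathtt{Nilpotent}(x)\;\rightleftharpoons\;\mathtt{Nil}(x)\,\&\,\mathtt{FinDeg}(x)$$
defines the set of all nilpotent varieties, where $\mathtt{Nil}(x)$ is taken from Table~\ref{formulas for sets} and $\mathtt{FinDeg}(x)$ from Theorem~\ref{fin degree}.

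For the individual varieties, the guiding idea is that $\mathcal{NILP}_k$ is the largest nilpotent variety of degree $k$. The point to check is that any nilpotent variety of degree at most $k$ satisfies $x_1x_2\cdots x_k=0$ and is hence a subvariety of $\var\{x_1x_2\cdots x_k=0\}=\mathcal{NILP}_k$, while $\mathcal{NILP}_k$ itself is nilpotent and, admitting nonzero products of every length smaller than $k$, is of degree exactly $k$. Thus $\mathcal{NILP}_k$ is the unique greatest element of the set of all nilpotent varieties of degree $k$, and the formula
$$\mathtt{NILP_k}(x)\;\rightleftharpoons\;\max\nolimits_x\bigl\{\mathtt{Nilpotent}(x)\,\&\,\mathtt{Deg_k}(x)\bigr\}$$
defines $\mathcal{NILP}_k$, with $\mathtt{Deg_k}(x)$ borrowed from Theorem~\ref{fin degree}.

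I expect the main obstacle to be bookkeeping rather than genuine difficulty. One must be careful that ``finite degree'' really delivers a single uniform bound on nilpotency class valid for every member of a nil-variety (so that a single $\mathcal{NILP}_k$ contains $\mathcal V$), and that the degree of $\mathcal{NILP}_k$ is exactly $k$ (so that $\mathtt{Deg_k}$ selects it and the $\max$ singles out precisely one variety). Both facts follow from the definitions of nil-variety and of degree together with the Sapir--Sukhanov criterion for finite degree already used in Theorem~\ref{fin degree}; no further lattice-theoretic ingredient is required, since the building blocks $\mathtt{Nil}$, $\mathtt{FinDeg}$ and $\mathtt{Deg_k}$ are all established earlier.
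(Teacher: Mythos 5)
Your proposal is correct and follows essentially the same route as the paper: the set of nilpotent varieties is defined by $\mathtt{Nil}(x)\,\&\,\mathtt{FinDeg}(x)$, and $\mathcal{NILP}_k$ is singled out as the greatest element among nil-varieties of degree $k$ via $\max_x\{\cdot\}$. The only (harmless) difference is that your second formula conjoins $\mathtt{FinDeg}$ where the paper writes $\max_x\bigl\{\mathtt{Nil}(x)\,\&\,\mathtt{Deg_k}(x)\bigr\}$; these are equivalent since $\mathtt{Deg_k}(x)$ already forces finite degree.
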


\begin{proof}
Evidently, a semigroup variety is nilpotent if and only if it is a
nil-variety of finite degree. Therefore, the set of all nilpotent varieties
is definable by the formula
$$\mathtt{Nilp}(x)\;\rightleftharpoons\;\mathtt{Nil}(x)\,\&\,\mathtt{FinDeg}
(x)\ldotp$$
The variety $\mathcal{NILP}_k$ is the largest nil-variety of degree $k$,
whence the formula
$$\mathtt{NILP_k}(x)\;\rightleftharpoons\;\max\nolimits_x\bigl\{\mathtt{Nil}
(x)\,\&\,\mathtt{Deg_k}(x)\bigr\}$$
defines this variety.
\end{proof}

A natural subset of varieties of finite degree is formed by \emph{varieties
of semigroups with completely regular power}, that is varieties $\mathcal V$
with the following property: for any member $S\in\mathcal V$, there is a
natural number $n$ such that the semigroup $S^n$ is completely regular. A
variety $\mathcal V$ is called a variety \emph{with completely regular
$k^{\text{th}}$ power} if $S^k$ is completely regular for any $S\in\mathcal
V$ and $k$ is the least number with this property. To prove that the set of
all semigroup varieties with completely regular [$k^{\text{th}}$] power is
definable, we need some additional information.

Put $\mathcal P=\var\,\{xy=x^2y,\,x^2y^2=y^2x^2\}$. It is well known that the
variety $\mathcal P$ is generated by the 3-element semigroup
$$P=\{e,a,0\}=\langle a,e\mid e^2=e,\,ea=a,\,ae=0\rangle\ldotp$$
The semigroup $P$ and the variety $\mathcal P$ frequently appears in articles
devoted to different aspects of the theory of semigroup varieties.

\begin{lemma}
\label{cr power}
A semigroup variety $\mathcal V$ of finite degree \textup[of degree
$k$\textup] is a variety of semigroups with completely regular power \textup
[with completely regular $k^{\text{th}}$ power\textup] if and only if
$\mathcal P,\overleftarrow{\mathcal P}\nsubseteq\mathcal V$.\qed
\end{lemma}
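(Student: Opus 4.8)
The plan is to prove both implications, using throughout that a variety of finite degree is periodic. Indeed, by Theorem~2 of \cite{Sapir-Sukhanov-81} (already invoked for Theorem~\ref{fin degree}) finite degree means $\mathcal N_\omega\nsubseteq\mathcal V$, while every non-periodic variety is overcommutative and so contains $\mathcal{COM}\supseteq\mathcal N_\omega$. Hence every $S\in\mathcal V$ is periodic, ``completely regular'' coincides with ``union of groups'', and $S$ has completely regular power [completely regular $k^{\text{th}}$ power] precisely when every element of $S^n$ for some $n$ [of $S^k$] lies in a subgroup of $S$, that is, has index $1$ in the monogenic subsemigroup it generates.

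First I would dispose of \emph{necessity} by examining $P$ itself. From the presentation one computes $a^2=a(ea)=(ae)a=0$, and the full table then gives $P\cdot P=\{e\cdot e,\,e\cdot a,\,a\cdot e\}=\{e,a,0\}=P$, so $P^n=P$ for every $n$. As $a$ is a nonzero nilpotent, $P$ is not a union of groups, so no power of $P$ is completely regular. Since varieties are closed under subsemigroups, $\mathcal P\subseteq\mathcal V$ would embed $P$ into a member of $\mathcal V$ and kill the completely regular power property (and already the $k^{\text{th}}$-power property, because $P^k=P$); the dual argument handles $\overleftarrow{\mathcal P}$. Thus the completely regular [$k^{\text{th}}$] power property forces $\mathcal P,\overleftarrow{\mathcal P}\nsubseteq\mathcal V$. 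One checks in passing that $\mathcal P$ and $\overleftarrow{\mathcal P}$ are themselves of finite degree, since they satisfy $xy=x^2y$ whereas $\mathcal N_\omega$ does not, so the hypothesis of the lemma is consistent.

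For \emph{sufficiency} assume $\mathcal V$ has degree $k$ with $\mathcal P,\overleftarrow{\mathcal P}\nsubseteq\mathcal V$, fix $S\in\mathcal V$, and aim to show every $t=s_1\cdots s_k\in S^k$ lies in a subgroup of $S$. I would argue by contraposition: supposing some such $t$ has index $\ge 2$, I would produce a subsemigroup of $S$ isomorphic to $P$ or to $\overleftarrow{P}$, contradicting $\mathcal P,\overleftarrow{\mathcal P}\nsubseteq\mathcal V$ (a subsemigroup of $S$ again lies in $\mathcal V$). Let $e=t^\omega$ be the idempotent of $\langle t\rangle$; since $t$ is not a group element one has $et=te\neq t$, so $t$ ``leaks'' out of the group $H_e$. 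Exploiting the factorization of $t$ into $k$ factors together with the nilpotency bound coming from degree $k$, I would isolate an idempotent $f$ and a nonzero element $a$ with $a^2=0$, $fa=a$ and $af=0$ (or the left--right dual relations), so that $\{f,a,0\}$ is exactly $P$ (respectively $\overleftarrow{P}$).

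The hard part will be this extraction: turning the qualitative fact ``$t$ has index $\ge 2$'' into the three sharp relations $a^2=0$, $fa=a$, $af=0$ that pin down $P$ rather than some larger semigroup of idempotents and nilpotents. The delicate relation is $af=0$, for which one must use that $t$ is a product of at least $k$ factors so that the offending product is forced to vanish by the degree bound, and one must verify that $a$ is genuinely nonzero, which is exactly where the failure of complete regularity of $S^k$ is spent. For the refined statement the same analysis yields that $S^k$ is completely regular, and minimality of the exponent $k$ follows by exhibiting inside the nil-subvariety $\mathcal N_k\subseteq\mathcal V$ a member whose $(k-1)^{\text{st}}$ power still contains a nonzero nilpotent and hence is not completely regular. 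In the paper this structural analysis can simply be quoted from the cited work of Sapir and Sukhanov, reducing the lemma to the explicit computation for $P$ above; the plan here merely spells out the route a self-contained proof would take.
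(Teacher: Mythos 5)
The paper does not actually prove this lemma: it states it with a \qed and refers the reader to \cite{Tishchenko-90} for the ``completely regular power'' half, asserting that the ``$k^{\text{th}}$ power'' variant ``can be verified quite analogously.'' So there is no in-paper argument to compare against, and your proposal must stand on its own. The necessity direction you give is correct and complete: $a^2=a(ea)=(ae)a=0$, $P^2=P$, so $P$ has a nonzero nilpotent in every power and $\mathcal P\subseteq\mathcal V$ (hence $P\in\mathcal V$) destroys the completely regular power property; the dual computation handles $\overleftarrow{\mathcal P}$. The side remarks (finite degree implies periodic via $\mathcal N_\omega\subseteq\mathcal{COM}$; minimality of the exponent $k$ via the free object of $\mathcal N_k$) are also fine.

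The sufficiency direction, however, is a genuine gap, and it is the entire content of the lemma. Everything after ``I would argue by contraposition'' is a plan, not a proof: the step that converts ``some $t=s_1\cdots s_k\in S^k$ has index $\ge 2$'' into a concrete triple $\{f,a,0\}$ with $fa=a$, $af=0$, $a^2=0$ is exactly what you label ``the hard part'' and then defer to the literature. Worse, the route you sketch claims to find $P$ or $\overleftarrow{P}$ as a \emph{subsemigroup} of $S$. That is stronger than what the lemma asserts ($\mathcal P\subseteq\mathcal V$ only requires $P$ to be a divisor of some member of $\mathcal V$, or can be reached through identities) and is not plausibly extractable from the data you actually have in hand: a single non-group element $t$ only gives you the monogenic subsemigroup $\langle t\rangle$, which is commutative and therefore contains no copy of the non-commutative semigroup $P$; the non-commutativity of the witness has to be manufactured from the interaction of the factorization $t=s_1\cdots s_k$ with idempotents elsewhere in $S$ and with the identities of $\mathcal V$. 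That analysis is precisely Tishchenko's theorem, so as written your argument proves only the easy implication and cites the hard one.
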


This assertion is verified in \cite{Tishchenko-90} for varieties of
semigroups with completely regular power, and its variant for varieties of
semigroups with completely regular $k^{\text{th}}$ power can be verified
quite analogously.

\begin{proposition}
\label{P and P*}
The varieties $\mathcal P$ and $\overleftarrow{\mathcal P}$ are definable up
to duality.
\end{proposition}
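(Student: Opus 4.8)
The plan is to exhibit $\{\mathcal P,\overleftarrow{\mathcal P}\}$ as the set of minimal elements of a definable class of varieties, and then read off the defining formula via the $\min_x$ construction. First I would record the elementary facts that make ``up to duality'' meaningful and that locate $\mathcal P$ inside $\mathbf{SEM}$. Since $\mathcal P$ satisfies $xy=x^2y$ whereas the opposite semigroup $P^{\mathrm{op}}$ does not (in $P^{\mathrm{op}}$ one has $a\ast e=a$ while $a^{\ast2}\ast e=0$), the generator of $\overleftarrow{\mathcal P}$ fails an identity of $\mathcal P$, so $\overleftarrow{\mathcal P}\nsubseteq\mathcal P$; as $\mathcal V\mapsto\overleftarrow{\mathcal V}$ is an automorphism of $\mathbf{SEM}$, this yields $\mathcal P\ne\overleftarrow{\mathcal P}$ and, moreover, that $\mathcal P$ and $\overleftarrow{\mathcal P}$ are incomparable. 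Next I would check that $\mathcal P$ has degree $2$: the identity $xy=x^2y$ forces every nil-subsemigroup of a member of $\mathcal P$ to satisfy $xy=0$, so $\mathcal N_3\nsubseteq\mathcal P$ while $\mathcal{ZM}=\mathcal N_2\subseteq\mathcal P$; in particular $\mathcal P$ is of finite degree. Finally I would verify that the only atoms below $\mathcal P$ are $\mathcal{SL}$ and $\mathcal{ZM}$ (witnessed by the subsemigroups $\{e,0\}$ and $\{a,0\}$ of $P$, and excluding $\mathcal{LZ},\mathcal{RZ}$ and all $\mathcal A_p$), and that $\mathcal P$ is not commutative (as $ea\ne ae$ in $P$). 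The dual statements hold for $\overleftarrow{\mathcal P}$.

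I would then consider the class $\mathcal D$ of all varieties $\mathcal V$ that are of finite degree, not commutative, and whose only atom-subvarieties are $\mathcal{SL}$ and $\mathcal{ZM}$. Using the already-definable predicates $\mathtt{FinDeg}$, $\mathtt{Com}$, $\mathtt A$, $\mathtt{SL}$ and $\mathtt{ZM}$, this class is defined by
\[
\mathtt D(x)\;\rightleftharpoons\;\mathtt{FinDeg}(x)\,\&\,\neg\mathtt{Com}(x)\,\&\,(\exists y)\bigl(\mathtt{SL}(y)\,\&\,y\le x\bigr)\,\&\,(\exists y)\bigl(\mathtt{ZM}(y)\,\&\,y\le x\bigr)\,\&\,(\forall y)\bigl(\mathtt A(y)\,\&\,y\le x\longrightarrow\mathtt{SL}(y)\ \textsc{or}\ \mathtt{ZM}(y)\bigr),
\]
and the candidate defining formula for the pair is $\min\nolimits_x\bigl\{\mathtt D(x)\bigr\}$. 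By the preliminaries, $\mathcal P,\overleftarrow{\mathcal P}\in\mathcal D$, and they are minimal in $\mathcal D$: their proper subvarieties are exactly $\mathcal T,\mathcal{SL},\mathcal{ZM},\mathcal{SL\vee ZM}$, each of which is commutative and hence lies outside $\mathcal D$.

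The substance — and the step I expect to be the main obstacle — is the reverse inclusion, that $\mathcal P$ and $\overleftarrow{\mathcal P}$ are the \emph{only} minimal elements of $\mathcal D$. Here I would invoke Lemma~\ref{cr power}. If $\mathcal V$ is minimal in $\mathcal D$ then, being of finite degree, $\mathcal V$ either contains $\mathcal P$ or $\overleftarrow{\mathcal P}$, or else has completely regular power. In the first case minimality together with $\mathcal P,\overleftarrow{\mathcal P}\in\mathcal D$ and incomparability forces $\mathcal V\in\{\mathcal P,\overleftarrow{\mathcal P}\}$. The remaining task is to rule out the second case, i.e.\ to show that a non-commutative variety of finite degree whose only atoms are $\mathcal{SL}$ and $\mathcal{ZM}$ cannot have completely regular power; equivalently, that finite degree together with completely regular power and this atom restriction forces commutativity — intuitively because the absence of the atoms $\mathcal{LZ},\mathcal{RZ}$ and of all group atoms leaves no room for a non-commutative completely regular ``part''. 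I expect this implication to be the crux, and to require the structural description of low-degree combinatorial varieties (or a direct analysis of the relevant free objects) rather than the purely lattice-theoretic manipulations used elsewhere; once it is established, $\min\nolimits_x\bigl\{\mathtt D(x)\bigr\}$ defines $\{\mathcal P,\overleftarrow{\mathcal P}\}$ and the proposition follows.
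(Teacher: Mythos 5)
Your strategy (exhibit the pair as the minimal elements of a definable class $\mathcal D$) is reasonable in outline, and your preliminary observations about $\mathcal P$ are correct, but the step you yourself flag as the crux is not merely unproved --- it is false, so the proposed formula $\min_x\{\mathtt D(x)\}$ does not define $\{\mathcal P,\overleftarrow{\mathcal P}\}$. Take $\mathcal V=\mathcal{SL}\vee\mathcal N_3^2$. It is non-commutative (because $\mathcal N_3^2$ is), and its only atoms are $\mathcal{SL}$ and $\mathcal{ZM}$: it satisfies $x^2=x^3$, hence is combinatorial, and it satisfies $xyzt=tzyx$ (valid in $\mathcal{SL}$ by commutativity and in $\mathcal N_3^2$ because both sides are $0$ there), which fails in $\mathcal{LZ}$ and $\mathcal{RZ}$. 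It has finite degree, since $xyz=(xyz)^2$ holds in $\mathcal V$ but fails in $\mathcal N_\omega$, so $\mathcal N_\omega\nsubseteq\mathcal V$; and it has completely regular power, since $xyz=(xyz)^2$ forces $S^3$ to be a band for every $S\in\mathcal V$. Thus $\mathcal V\in\mathcal D$, which already refutes the claim that finite degree, completely regular power and the atom restriction force commutativity: non-commutativity can sit entirely inside a nilpotent part, which your class $\mathcal D$ does nothing to exclude. Worse, $\mathcal V$ is a \emph{minimal} member of $\mathcal D$: the identity $xyzt=tzyx$ fails in $P$ (put $x=y=z=e$, $t=a$), so $\mathcal P,\overleftarrow{\mathcal P}\nsubseteq\mathcal V$, and since $\mathcal{SL}$ is a neutral element of $\mathbf{SEM}$ (Lemma \ref{neutral}), the interval $[\mathcal{SL},\mathcal{SL}\vee\mathcal N_3^2]$ is isomorphic to $L(\mathcal N_3^2)$, i.e.\ is the four-element chain $\mathcal{SL}<\mathcal{SL\vee ZM}<\mathcal{SL}\vee\mathcal N_3<\mathcal V$, whose only non-commutative member is $\mathcal V$ itself. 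So $\min_x\{\mathtt D(x)\}$ is satisfied by $\mathcal V\notin\{\mathcal P,\overleftarrow{\mathcal P}\}$.

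The paper takes an entirely different and much shorter route: it quotes from Sukhanov's classification of varieties of width 2 the fact that $\mathcal P$ and $\overleftarrow{\mathcal P}$ are the only varieties with the property that every proper subvariety is contained in $\mathcal{SL\vee ZM}$, and writes the formula $(\exists y,z)\,\bigl(\mathtt{SL}(y)\,\&\,\mathtt{ZM}(z)\,\&\,(\forall t)\,(t<x\longrightarrow t\le y\vee z)\bigr)$ straight from that characterization. Note that you actually proved the easy half of this fact --- that the proper subvarieties of $\mathcal P$ are exactly $\mathcal T,\mathcal{SL},\mathcal{ZM},\mathcal{SL\vee ZM}$ --- but used it only to establish minimality in $\mathcal D$; the genuinely hard half is the uniqueness statement, which is supplied by the cited structural result and for which your argument via Lemma \ref{cr power} offers no substitute.
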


\begin{proof}
It can be easily verified (and follows from results of \cite{Sukhanov-85},
for instance) that the varieties $\mathcal P$, $\overleftarrow{\mathcal P}$
and only they have the property that any proper subvariety of a variety is
contained in the variety $\mathcal{SL\vee ZM}$. Therefore the formula
$$\mathtt{P\text{-}and\text{-}}\!\overleftarrow{\mathtt P}(x)\;
\rightleftharpoons\;(\exists y,z)\,\bigl(\mathtt{SL}(y)\,\&\,\mathtt{ZM}(z)\,
\&\,(\forall t)\,(t<x\longrightarrow t\le y\vee z)\bigr)$$
defines the set $\{\mathcal P,\overleftarrow{\mathcal P}\}$.
\end{proof}

The statement that the set of all completely regular varieties is definable
(see Theorem \ref{definable sets}) is generalized by the following

\begin{theorem}
\label{cr power def}
The set of all varieties of semigroups with completely regular power is
definable. For every natural number $k$, the set of all varieties of
semigroups with $k^{\text{th}}$ completely regular power is definable.
\end{theorem}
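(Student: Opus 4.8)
The plan is to read off both defining formulas directly from the algebraic characterisation in Lemma \ref{cr power}, using as building blocks the formulas $\mathtt{FinDeg}(x)$ and $\mathtt{Deg_k}(x)$ from Theorem \ref{fin degree} and the formula $\mathtt{P\text{-}and\text{-}}\!\overleftarrow{\mathtt P}(x)$ from Proposition \ref{P and P*}. The latter singles out exactly the two varieties $\mathcal P$ and $\overleftarrow{\mathcal P}$, so the requirement $\mathcal P,\overleftarrow{\mathcal P}\nsubseteq\mathcal V$ is captured by $(\forall y)\bigl(\mathtt{P\text{-}and\text{-}}\!\overleftarrow{\mathtt P}(y)\longrightarrow y\nleq x\bigr)$.

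For the unbounded case I would combine this with $\mathtt{FinDeg}$. Since, as recorded above, every variety with completely regular power is of finite degree, Lemma \ref{cr power} gives that $\mathcal V$ has completely regular power if and only if $\mathcal V$ is of finite degree and $\mathcal P,\overleftarrow{\mathcal P}\nsubseteq\mathcal V$. Hence the formula
$$\mathtt{CRPow}(x)\;\rightleftharpoons\;\mathtt{FinDeg}(x)\,\&\,(\forall y)\,\bigl(\mathtt{P\text{-}and\text{-}}\!\overleftarrow{\mathtt P}(y)\longrightarrow y\nleq x\bigr)$$
should define the required set. The conjunct $\mathtt{FinDeg}(x)$ is genuinely needed: a variety such as $\mathcal N_\omega$ avoids $\mathcal P$ and $\overleftarrow{\mathcal P}$ (being commutative, whereas $\mathcal P$ is not) yet is not of finite degree, and so has no completely regular power. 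For the bounded case the natural candidate is obtained by replacing $\mathtt{FinDeg}$ with $\mathtt{Deg_k}$, namely
$$\mathtt{CRPow_k}(x)\;\rightleftharpoons\;\mathtt{Deg_k}(x)\,\&\,(\forall y)\,\bigl(\mathtt{P\text{-}and\text{-}}\!\overleftarrow{\mathtt P}(y)\longrightarrow y\nleq x\bigr)\ldotp$$

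The one point needing care --- and the only real obstacle, since the definability of each ingredient is already in place --- is the bookkeeping of the two a priori different ``least-number'' indices hidden in the bounded case: the $k$ of ``degree $k$'' (the least bound on the nilpotency degrees of nilsemigroups) and the $k$ of ``completely regular $k^{\text{th}}$ power'' (the least exponent making all powers completely regular). I would argue that they always match, that is, that $\mathcal V$ has completely regular $k^{\text{th}}$ power if and only if $\mathcal V$ is of degree $k$ and $\mathcal P,\overleftarrow{\mathcal P}\nsubseteq\mathcal V$. For the forward direction, such a $\mathcal V$ is of finite degree, say of degree $m$; the first assertion of Lemma \ref{cr power} forces $\mathcal P,\overleftarrow{\mathcal P}\nsubseteq\mathcal V$, and then the degree-$m$ form of the lemma makes the completely-regular-power index equal to $m$, whence $m=k$. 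The converse is immediate from the lemma. Once this equivalence is confirmed, $\mathtt{CRPow_k}(x)$ defines the set of all varieties with completely regular $k^{\text{th}}$ power, which finishes the proof.
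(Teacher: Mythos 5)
Your proposal is correct and coincides with the paper's own proof: the paper writes down exactly the same two formulas $\mathtt{CRPow}(x)$ and $\mathtt{CRPow_k}(x)$ and derives them directly from Lemma \ref{cr power}. Your extra care about matching the two ``least-number'' indices is a point the paper passes over with ``immediately implies,'' but it does not change the argument.
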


\begin{proof}
Lemma \ref{cr power} immediately implies that the set of all varieties of
semigroups with completely regular power and the set of all varieties of
semigroups with $k^{\text{th}}$ completely regular power are defined by the
formulas
\begin{align*}
&\mathtt{CRPow}(x)\;\rightleftharpoons\;\mathtt{FinDeg}(x)\,\&\,(\forall y)\,
\bigl(\mathtt{P\text{-}and\text{-}}\!\overleftarrow{\mathtt P}(y)
\longrightarrow y\nleq x\bigr),\\
&\mathtt{CRPow_k}(x)\;\rightleftharpoons\;\mathtt{Deg_k}(x)\,\&\,(\forall
y)\,\bigl(\mathtt{P\text{-}and\text{-}}\!\overleftarrow{\mathtt P}(y)
\longrightarrow y\nleq x\bigr)
\end{align*}
respectively.
\end{proof}

\section{Commutative varieties}
\label{commut}

Here we are going to provide some series of definable varieties of
commutative semigroups. To achieve this aim, we need some auxiliary facts.
The following lemma follows from Lemma 2 of \cite{Volkov-89} and the proof of
Proposition 1 of the same article.

\begin{lemma}
\label{K+N}
If a periodic semigroup variety $\mathcal V$ does not contain the varieties
$\mathcal{LZ}$, $\mathcal{RZ}$, $\mathcal P$ and $\overleftarrow{\mathcal P}$
then $\mathcal{V=K\vee N}$ where $\mathcal K$ is a variety generated by a
monoid, while $\mathcal N$ is a nil-variety.\qed
\end{lemma}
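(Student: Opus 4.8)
The plan is to exhibit the decomposition explicitly and then reduce the nontrivial inclusion to a separation statement about the relatively free object. Let $\mathcal N$ denote the largest nil-subvariety of $\mathcal V$ (the subvariety generated by all nilsemigroups lying in $\mathcal V$), and let $\mathcal K$ denote the variety generated by all monoids lying in $\mathcal V$; since a variety is generated by its free object on countably many generators, and the free object of a monoid variety is itself a monoid, $\mathcal K$ is indeed generated by a single monoid. Both $\mathcal N$ and $\mathcal K$ are subvarieties of $\mathcal V$, so the inclusion $\mathcal{K\vee N\subseteq V}$ is immediate, and the whole content of the lemma is the reverse inclusion $\mathcal{V\subseteq K\vee N}$.

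To prove the reverse inclusion I would pass to the relatively free semigroup $F=F_{\mathcal V}(\aleph_0)$ and consider the two congruences $\rho_{\mathcal K}$ and $\rho_{\mathcal N}$ whose quotients are the free objects of $\mathcal K$ and of $\mathcal N$. It suffices to show that $\rho_{\mathcal K}\cap\rho_{\mathcal N}$ is the identity congruence on $F$; equivalently, that an identity $u=v$ holding in every monoid and in every nilsemigroup of $\mathcal V$ already holds in $\mathcal V$. Indeed, this makes $F$ a subdirect product of $F/\rho_{\mathcal K}\in\mathcal K$ and $F/\rho_{\mathcal N}\in\mathcal N$, so that $F\in\mathcal{K\vee N}$, and since $F$ generates $\mathcal V$ we obtain $\mathcal{V\subseteq K\vee N}$.

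The hypotheses on $\mathcal V$ are what make this separation possible, and extracting their structural meaning is the heart of the argument (this is precisely where Lemma 2 and the proof of Proposition 1 of Volkov's article are invoked). Since $\mathcal{LZ,RZ}\nsubseteq\mathcal V$, the band subvariety of $\mathcal V$ has $\mathcal{SL}$ as its only possible atom and is therefore contained in $\mathcal{SL}$; thus every band in $\mathcal V$ is a semilattice, so idempotents behave commutatively. Since $\mathcal P,\overleftarrow{\mathcal P}\nsubseteq\mathcal V$, the asymmetric interaction between idempotents and nilpotents exhibited by the generator $P=\{e,a,0\}$ of $\mathcal P$ (where $ea=a$ but $ae=0$) is forbidden, which forces the action of the monoid part on the nilpotent part to be two-sided and controlled. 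Using these two facts I would attach to each word $w$ a canonical form recording its image in the monoid quotient together with its ``nil content'', and argue that two words with the same pair of images are already equal in $\mathcal V$.

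The main obstacle is exactly this coordinatization: showing that a ``mixed'' relation between words that are neither group-like nor nilpotent is a consequence of the monoid relations and the nil relations taken separately. The absence of $\mathcal P$ and $\overleftarrow{\mathcal P}$ is indispensable here and not merely technical---the variety $\mathcal P$ is generated by a semigroup that is neither a monoid (as $ae=0\ne a$) nor a nilsemigroup (as $e^2=e\ne 0$), and it is precisely the obstruction to a join decomposition; so any proof must exploit this exclusion in order to rule out such indecomposable behaviour before the subdirect decomposition of $F$ can be carried through.
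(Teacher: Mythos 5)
You set the problem up correctly: with $\mathcal N=\Nil(\mathcal V)$ and $\mathcal K$ the variety generated by all monoids in $\mathcal V$, the lemma reduces to showing that every identity valid in all monoids and in all nilsemigroups of $\mathcal V$ is valid in $\mathcal V$ itself, i.e.\ that $\rho_{\mathcal K}\cap\rho_{\mathcal N}$ is the equality relation on the countably generated relatively free semigroup of $\mathcal V$. One slip along the way: your justification that $\mathcal K$ is generated by a \emph{single} monoid is wrong, because the relatively free semigroups of a semigroup variety generated by monoids need not be monoids --- already the free semilattice on two generators has no identity element, although $\mathcal{SL}$ is generated by a monoid; the correct route is to take a direct product of a suitable set of monoids of $\mathcal V$. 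The real problem, however, is that the separation claim, which is the entire content of the lemma, is never proved. You explicitly label it ``the main obstacle'' and then offer only heuristics: that bands of $\mathcal V$ are semilattices because $\mathcal{LZ},\mathcal{RZ}\nsubseteq\mathcal V$; that excluding $\mathcal P$ and $\overleftarrow{\mathcal P}$ ``forces the action of the monoid part on the nilpotent part to be two-sided and controlled''; that you ``would attach'' a canonical form to each word and ``argue'' that words with the same pair of images coincide in $\mathcal V$. None of this is carried out: no concrete identities of $\mathcal V$ are extracted from the hypotheses, no canonical form is defined, and no rewriting argument is given. What you have is a proof plan whose decisive step is deferred, by your own admission, to Volkov's article.

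For what it is worth, the paper gives no proof of this statement either: Lemma \ref{K+N} is stated without argument and attributed to Lemma 2 and the proof of Proposition 1 of \cite{Volkov-89}. So there is no in-paper proof against which to measure your sketch; but judged as a self-contained attempt it has a genuine gap, namely the whole of the word-combinatorial work needed to show that the forbidden-subvariety hypotheses make $\rho_{\mathcal K}\cap\rho_{\mathcal N}$ trivial.
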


Let $C_{m,1}$ denote the cyclic monoid $\langle a\mid a^m=a^{m+1}\rangle$ and
let $\mathcal C_m$ be the variety generated by $C_{m,1}$. It is clear that
$$\mathcal C_m=\var\,\{x^m=x^{m+1},\,xy=yx\}\ldotp$$
In particular, $C_{1,1}$ is the 2-element semilattice and $\mathcal C_1=
\mathcal{SL}$. For notation convenience we put also $\mathcal C_0=\mathcal
T$. The following lemma can be easily extracted from the results of \cite
{Head-68}.

\begin{lemma}
\label{comm monoid}
If a periodic semigroup variety $\mathcal V$ is generated by a commutative
monoid then $\mathcal{V=G\vee C}_m$ for some Abelian periodic group variety
$\mathcal G$ and some $m\ge0$.\qed
\end{lemma}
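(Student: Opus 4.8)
The plan is to pass from the monoid $M$ (with $\mathcal V=\var M$) to the set of identities it satisfies, exploiting that $M$ has an identity element $1$. Since $M$ is commutative, I may write every identity of $M$ as $u=v$ with $u=x_1^{a_1}\cdots x_k^{a_k}$ and $v=x_1^{b_1}\cdots x_k^{b_k}$ (same variables, exponents $\ge0$). Substituting $1$ for all variables except $x_j$ turns such an identity into a one-variable consequence: either a genuine semigroup identity $x_j^{a_j}=x_j^{b_j}$ (when $a_j,b_j\ge1$), or a \emph{unit identity} $x_j^{a_j}=1$ (when exactly one side loses $x_j$), which in semigroup language reads $x_j^{a_j}y=y$. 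First I would establish the converse reduction: modulo commutativity, every identity of $M$ is a consequence of its one-variable identities together with its unit identities. This is the monoid analogue of the elementary normal form for commutative semigroup identities, and it is here that the presence of $1$ is essential.

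Next I would identify the two families of data. Since $\mathcal V$ is periodic, the genuine one-variable identities of $M$ are exactly the consequences of a single identity $x^m=x^{m+r}$, where $m$ is the index and $r$ the period of $M$; this follows from the structure of the free monogenic periodic semigroup (the admissible pairs $(a,b)$ are $a=b$, or $a,b\ge m$ with $a\equiv b\pmod r$). For the unit identities, the set $\{a\ge1\mid M\models x^ay=y\}$ is empty or equals $r_0\mathbb N$, generated by $x^{r_0}y=y$. The key observation is that $M\models x^{r_0}y=y$ forces every element of $M$ to be invertible, i.e.\ $M$ is a group; so nontrivial unit identities occur precisely when $M$ is an Abelian periodic group.

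This yields two cases. If $M$ is a group then $\mathcal V=\mathcal A_{r_0}$, which is $\mathcal G\vee\mathcal C_0$ with $\mathcal G=\mathcal A_{r_0}$ and $m=0$ (recall $\mathcal C_0=\mathcal T$). If $M$ is not a group there are no nontrivial unit identities, so the reduction gives $\mathcal V=\var\{xy=yx,\ x^m=x^{m+r}\}$ with $m\ge1$, and it remains to prove $\var\{xy=yx,\ x^m=x^{m+r}\}=\mathcal A_r\vee\mathcal C_m$. The inclusion $\supseteq$ is a direct check that both $\mathcal A_r$ and $\mathcal C_m$ satisfy $x^m=x^{m+r}$. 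For $\subseteq$ I would compute, variable by variable, when a commutative identity holds in $\mathcal A_r\vee\mathcal C_m$: evaluating in the cyclic monoid $C_{m,1}$ gives the condition $\min(a_j,m)=\min(b_j,m)$, and evaluating in the cyclic group $\mathbb Z_r$ gives $a_j\equiv b_j\pmod r$; together these force $a_j=b_j$ or $a_j,b_j\ge m$, which for $m\ge1$ forces equal content and makes $u=v$ a consequence of the per-variable identities $x_j^{a_j}=x_j^{b_j}$, each of which follows from $x^m=x^{m+r}$.

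The hard part will be the bookkeeping around the unit identities: the naive reduction ``substitute $1$ for all but one variable, obtaining one-variable identities'' is \emph{false}, precisely because of group identities such as $xy^r=x$, which collapse to the empty word $y^r=1$ rather than to a semigroup identity. Recognising that these unit identities are exactly what separates the group case ($m=0$) from the non-group case, and that they cannot arise once $M$ is not a group, is the crux; everything else is the elementary per-variable computation above. All of this is in fact packaged in Head's classification of commutative monoid varieties, from which the statement can be read off directly, and the route sketched here simply makes that dependence explicit.
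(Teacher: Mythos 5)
Your argument is correct, but there is no internal proof in the paper to compare it against: the lemma is stated with only the remark that it ``can be easily extracted from the results of'' Head's 1968 classification of varieties of commutative monoids. What you have done is unpack the relevant fragment of that classification into a self-contained elementary argument, and the unpacking is sound. In a commutative monoid the factorisation $u(c_1,\dots,c_k)=\prod_j c_j^{a_j}$ shows that an identity in normal form holds in $M$ exactly when all of its one-variable specialisations (including the unit ones $x^a=1$) hold, and that it is derivable from those specialisations together with commutativity provided no letter vanishes from one side only; a nontrivial unit identity forces every element of $M$ to be invertible, giving $\mathcal V=\mathcal A_{r_0}=\mathcal A_{r_0}\vee\mathcal C_0$; otherwise all exponents are positive and the one-variable identities of $M$ coincide with those of $\mathbb Z_r\times C_{m,1}$, giving $\mathcal V=\var\,\{xy=yx,\,x^m=x^{m+r}\}=\mathcal A_r\vee\mathcal C_m$. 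Your identification of the unit identities as the crux --- they are exactly what separates the case $m=0$ from $m\ge1$ --- is the right observation. Two small points to tighten in a written version: for a possibly infinite $M$ you should take $m$ to be the supremum of the indices and $r$ the least common multiple of the periods of the elements of $M$, both finite precisely because $\mathcal V$ is periodic; and you should note that $\mathcal A_r$ is generated by the cyclic group of order $r$ before reading off the identities of $\mathcal A_r\vee\mathcal C_m$ from the two cyclic generators. The trade-off against the paper's route is the usual one: the citation to Head is shorter, while your version is verifiable on the spot and makes explicit where periodicity and the presence of the identity element are actually used.
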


Lemmas \ref{K+N} and \ref{comm monoid} immediately imply

\begin{corollary}
\label{comb and commut}
If $\mathcal V$ is a commutative combinatorial semigroup variety then
$\mathcal{V=C}_m\vee\mathcal N$ for some $m\ge0$ and some nil-variety
$\mathcal N$.\qed
\end{corollary}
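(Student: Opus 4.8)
The plan is to apply Lemmas \ref{K+N} and \ref{comm monoid} in succession, after first checking that $\mathcal V$ satisfies their hypotheses. The first task is to observe that a combinatorial variety is automatically periodic: every semigroup variety is either periodic or overcommutative, and an overcommutative variety contains $\mathcal{COM}=\bigvee_p\mathcal A_p$ and hence every atom $\mathcal A_p$, so it cannot be combinatorial. Thus $\mathcal V$ is periodic, which is the standing hypothesis of both lemmas.

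Next I would verify that the commutative variety $\mathcal V$ contains none of $\mathcal{LZ}$, $\mathcal{RZ}$, $\mathcal P$, $\overleftarrow{\mathcal P}$. Each of these four varieties is non-commutative: the two-element left (right) zero semigroup is non-commutative, and the generating semigroup $P=\{e,a,0\}$ satisfies $ea=a\ne0=ae$. Since any subvariety of the commutative variety $\mathcal V$ is again commutative, none of these non-commutative varieties can lie in $\mathcal V$. Hence the hypotheses of Lemma \ref{K+N} are met, and we obtain $\mathcal{V=K\vee N}$ with $\mathcal K$ generated by a monoid and $\mathcal N$ a nil-variety.

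Then I would analyze $\mathcal K$. As a subvariety of $\mathcal V$ it is periodic and commutative; in particular the monoid generating $\mathcal K$ belongs to $\mathcal K$ and is therefore commutative. Applying Lemma \ref{comm monoid} to this periodic variety generated by a commutative monoid gives $\mathcal{K=G\vee C}_m$ for some Abelian periodic group variety $\mathcal G$ and some $m\ge0$.

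Finally I would kill the group part. Since $\mathcal G\subseteq\mathcal K\subseteq\mathcal V$ and $\mathcal V$ is combinatorial, $\mathcal G$ contains no atom $\mathcal A_p$; but a non-trivial Abelian periodic group variety always contains some such atom, so $\mathcal G=\mathcal T$ and $\mathcal K=\mathcal C_m$. Substituting back yields $\mathcal{V=C}_m\vee\mathcal N$, as required. None of the steps is genuinely hard --- the statement is billed as an immediate consequence of the two lemmas --- but the one demanding the most care is the verification that $\mathcal V$ avoids $\mathcal P$ and $\overleftarrow{\mathcal P}$, since that rests on the explicit multiplication of the generating semigroup $P$ rather than on a one-line lattice-theoretic observation.
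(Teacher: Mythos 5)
Your proposal is correct and is exactly the argument the paper intends by stating the corollary as an immediate consequence of Lemmas \ref{K+N} and \ref{comm monoid}: combinatorial forces periodicity, commutativity excludes $\mathcal{LZ}$, $\mathcal{RZ}$, $\mathcal P$, $\overleftarrow{\mathcal P}$, and the combinatorial hypothesis then trivializes the group factor $\mathcal G$ in the resulting decomposition $\mathcal{G\vee C}_m\vee\mathcal N$. All the intermediate verifications you supply are sound, so there is nothing to add.
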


Let now $\mathcal V$ be a commutative semigroup variety with $\mathcal{V\ne
COM}$. Lemmas \ref{K+N} and \ref{comm monoid} imply that $\mathcal{V=G\vee
C}_m\vee\mathcal N$ for some Abelian periodic group variety $\mathcal G$,
some $m\ge0$ and some commutative nil-variety $\mathcal N$. Our aim in this
section is to provide formulas defining the varieties $\mathcal G$ and
$\mathcal C_m$.

As is well known, a periodic semigroup variety $\mathcal X$ contains the
greatest nil-subvariety. We denote this subvariety by $\Nil(\mathcal X)$.
Put
$$\mathcal D_m=\Nil(\mathcal C_m)=\var\,\{x^m=0,\,xy=yx\}$$
for every natural $m$. In particular, $\mathcal D_1=\mathcal T$ and $\mathcal
D_2=\mathcal N_\omega$. Now we are well prepared to verify

\begin{proposition}
\label{C_m}
For each $m\ge0$, the variety $\mathcal C_m$ is definable.
\end{proposition}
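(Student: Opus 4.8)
The plan is to express $\mathcal{C}_m$ as the $m$-th element of a definable chain, so that Lemma~\ref{chain set} applies. The key structural fact I would rely on is that the varieties $\mathcal{C}_0=\mathcal{T}\subset\mathcal{C}_1=\mathcal{SL}\subset\mathcal{C}_2\subset\cdots$ form an increasing chain: indeed $\mathcal{C}_m=\var\{x^m=x^{m+1},\,xy=yx\}$ and the defining identity of $\mathcal{C}_{m+1}$ is a consequence of that of $\mathcal{C}_m$, while the containments are strict because the generating monoids $C_{m,1}$ have strictly increasing index. Thus the set $\{\mathcal{C}_m\mid m\ge 0\}$ is order-isomorphic to the chain of natural numbers. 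If I can produce a single formula $\Phi(x)$ that defines the entire set $\{\mathcal{C}_m\mid m\ge 0\}$, then Lemma~\ref{chain set} yields definability of each individual $\mathcal{C}_m$, and the proof is complete.

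The main work, therefore, is to find a first-order characterization of the $\mathcal{C}_m$ among all semigroup varieties. My intended approach is to characterize each $\mathcal{C}_m$ through its position relative to already-definable objects. The natural candidate is to say that $\mathcal{C}_m$ is, among commutative combinatorial varieties, the one that is generated by a single monoid and contains no genuine nil-part beyond what is forced. Concretely, I would use Corollary~\ref{comb and commut}: every commutative combinatorial variety decomposes as $\mathcal{C}_m\vee\mathcal{N}$ with $\mathcal{N}$ a nil-variety. The varieties $\mathcal{C}_m$ themselves should then be singled out as the \emph{minimal} commutative combinatorial varieties whose greatest nil-subvariety equals $\mathcal{D}_m$, or equivalently as the join-irreducible commutative combinatorial varieties that are not nil-varieties. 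Since $\mathtt{Com}(x)$, $\mathtt{Comb}(x)$ and $\mathtt{Nil}(x)$ are all available formulas, such a characterization can be transcribed into the lattice language, very likely using a $\min_x$ construction over the commutative combinatorial non-nil varieties.

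The step I expect to be the main obstacle is verifying that the proposed formula picks out \emph{exactly} the set $\{\mathcal{C}_m\}$ and nothing else --- that is, ruling out spurious solutions. The decomposition $\mathcal{V}=\mathcal{C}_m\vee\mathcal{N}$ is not unique in the choice of $\mathcal{N}$, and one must check that the minimality or join-irreducibility condition forces the nil-part to collapse. This requires understanding which commutative combinatorial varieties lie below a given $\mathcal{C}_m$, and confirming that $\mathcal{C}_m$ has no proper cover structure that would admit a smaller variety with the same characterizing property. I would discharge this by a direct lattice-theoretic argument inside the (well-understood) lattice of commutative combinatorial varieties, leaning on Corollary~\ref{comb and commut} to control every candidate. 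Once the set $\{\mathcal{C}_m\mid m\ge 0\}$ is shown to be definable, the remaining passage to individual definability is purely formal via Lemma~\ref{chain set}.
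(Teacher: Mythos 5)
Your overall architecture matches the paper's: use Corollary \ref{comb and commut} to control commutative combinatorial varieties, produce one formula defining the whole set $\{\mathcal C_m\mid m\ge0\}$, and then invoke Lemma \ref{chain set} on the chain $\mathcal C_0\subset\mathcal C_1\subset\cdots$. The paper's formula is essentially your join-irreducibility idea in a restricted form: $x$ is commutative, combinatorial, and admits no decomposition $x=y\vee z$ with $y$ a nil-variety and $z\ne x$. However, the two concrete characterizations you offer as ``equivalent'' are not interchangeable, and the first is false. The variety $\mathcal{SL}\vee\mathcal D_m$ is commutative, combinatorial, non-nil, has greatest nil-subvariety exactly $\mathcal D_m$, and is strictly smaller than $\mathcal C_m$ for $m\ge2$: the identity $x^{m-1}y^m=x^my^m$ holds in $\mathcal{SL}$ and in $\mathcal D_m$ but fails in $C_{m,1}$ upon substituting the identity element for $y$. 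So ``minimal commutative combinatorial variety whose greatest nil-subvariety equals $\mathcal D_m$'' does not single out $\mathcal C_m$; moreover, in the paper $\mathcal D_m$ is itself defined by means of the formula for $\mathcal C_m$, so building the definition of $\mathcal C_m$ on $\mathcal D_m$ risks circularity. The join-irreducibility characterization, by contrast, is correct.

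The genuine gap is precisely the step you defer: proving that $\mathcal C_m$ satisfies the chosen formula, i.e.\ that $\mathcal C_m=\mathcal M\vee\mathcal N$ with $\mathcal N$ a nil-variety forces $\mathcal M=\mathcal C_m$. This cannot be discharged by ``a direct lattice-theoretic argument inside the lattice of commutative combinatorial varieties'': that lattice contains the lattice of commutative nil-varieties, which satisfies no non-trivial lattice identity, and nothing in the order structure alone excludes a decomposition $\mathcal C_m=\mathcal C_r\vee\mathcal N$ with $r<m$. The paper's argument is equational. Writing $\mathcal M=\mathcal C_r\vee\mathcal N'$ with $\mathcal N'\subseteq\mathcal D_m$ via Corollary \ref{comb and commut}, one reduces to $\mathcal C_m=\mathcal C_r\vee\mathcal N$ with $\mathcal N\subseteq\mathcal D_m$; if $r<m$, the identity $x^ry^m=x^{r+1}y^m$ holds in $\mathcal C_r$ (where $x^r=x^{r+1}$) and in $\mathcal N$ (both sides equal $0$ there), hence in $\mathcal C_m$, and since $\mathcal C_m$ is generated by a monoid one may substitute $1$ for $y$ to get $x^r=x^{r+1}$ in $\mathcal C_m$ --- a contradiction. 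This substitution trick, resting on the fact that $\mathcal C_m$ is generated by a monoid, is the real content of the proposition and is absent from your proposal.
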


\begin{proof}
First, we are going to verify that the formula
$$\mathtt{All\text{-}C_m}(x)\;\rightleftharpoons\;\mathtt{Com}(x)\,\&\,
\mathtt{Comb}(x)\,\&\,(\forall y,z)\,\bigl(\mathtt{Nil}(y)\,\&\,x=y\vee z
\longrightarrow x=z\bigr)$$
defines the set of varieties $\{\mathcal C_m\mid m\ge0\}$. Let $\mathcal V$
be a semigroup variety such that the sentence $\mathtt{All\text{-}C_m}
(\mathcal V)$ is true. Then $\mathcal V$ is commutative and combinatorial.
Now Corollary \ref{comb and commut} successfully applies with the conclusion
that $\mathcal{M=C}_m\vee\mathcal N$ for some $m\ge0$ and some nil-variety
$\mathcal N$. The fact that the sentence $\mathtt{All\text{-}C_m}(\mathcal
V)$ is true shows that $\mathcal{M=C}_m$.

Let now $m\ge0$. We aim to verify that the sentence $\mathtt{All\text{-}C_m}
(\mathcal C_m)$ is true. It is evident that the variety $\mathcal C_m$ is
commutative and combinatorial. Suppose that $\mathcal C_m=\mathcal{M\vee N}$
where $\mathcal N$ is a nil-variety. It remains to check that $\mathcal{N
\subseteq M}$. We may assume without any loss that $\mathcal{N=\Nil(C}_m)=
\mathcal D_m$. It is clear that $\mathcal M$ is a commutative and
combinatorial variety. Corollary \ref{comb and commut} implies that
$\mathcal{M=C}_r\vee\mathcal N'$ for some $r\ge0$ and some nil-variety
$\mathcal N'$. Then $\mathcal{N'\subseteq\Nil(C}_m)=\mathcal N$, whence
$$\mathcal C_m=\mathcal{M\vee N=C}_r\vee\mathcal{N'\vee\mathcal N=C}_r\vee
\mathcal N\ldotp$$
It suffices to prove that $\mathcal{N\subseteq C}_r$ because $\mathcal{N
\subseteq C}_r\vee\mathcal{N'=M}$ in this case. The equality $\mathcal C_m=
\mathcal C_r\vee\mathcal N$ implies that $\mathcal C_r\subseteq\mathcal C_m$,
whence $r\le m$. If $r=m$ then $\mathcal{N\subseteq C}_r$, and we are done.
Let now $r<m$. Then the variety $\mathcal C_m=\mathcal C_r\vee\mathcal N$
satisfies the identity $x^ry^m=x^{r+1}y^m$. Recall that the variety $\mathcal
C_m$ is generated by a monoid. Substituting 1 for $y$ in this identity, we
obtain that $\mathcal C_m$ satisfies the identity $x^r=x^{r+1}$. Therefore
$\mathcal C_m\subseteq\mathcal C_r$ contradicting the unequality $r<m$.

Thus we have proved that the set of varieties $\{\mathcal C_m\mid m\ge0\}$ is
definable by the formula $\mathtt{All\text{-}C_m}(x)$. Now Lemma \ref
{chain set} successfully applies with the conclusion that the variety
$\mathcal C_m$ is definable for each $m$.
\end{proof}

The following assertion generalizes the fact that the variety $\mathcal
N_\omega$ is definable (see Proposition \ref{nil chain}).

\begin{proposition}
\label{x^m=0,xy=yx}
For every natural number $m$, the variety $\mathcal D_m$ is definable.
\end{proposition}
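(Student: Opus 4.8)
The plan is to exploit that $\mathcal D_m$ is, by its very definition, the greatest nil-subvariety of the variety $\mathcal C_m$, which has already been shown to be definable. First I would invoke Proposition \ref{C_m}, which supplies, for each fixed $m$, a formula $\mathtt{C_m}(x)$ defining the variety $\mathcal C_m$; together with the formula $\mathtt{Nil}(x)$ from Table \ref{formulas for sets}, defining the set of all nil-varieties, these are the only two external ingredients needed to speak about nil-subvarieties of $\mathcal C_m$ inside a single lattice-theoretic first-order formula.

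The key observation is the one recalled just before the statement: a periodic semigroup variety possesses a greatest nil-subvariety, and $\mathcal D_m=\Nil(\mathcal C_m)$ is precisely this greatest nil-subvariety of $\mathcal C_m$. Since $\mathcal C_m$ is periodic and $\mathcal D_m$ is the \emph{greatest} (not merely a maximal) nil-subvariety, every nil-subvariety of $\mathcal C_m$ is contained in $\mathcal D_m$, so $\mathcal D_m$ is the unique maximal element of the set of nil-subvarieties of $\mathcal C_m$. Accordingly I would define $\mathcal D_m$ by the formula
$$\mathtt{D_m}(x)\;\rightleftharpoons\;\max\nolimits_x\bigl\{\mathtt{Nil}(x)\,\&\,(\exists y)\,(\mathtt{C_m}(y)\,\&\,x\le y)\bigr\}\ldotp$$

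To justify this I would verify both directions. If the sentence $\mathtt{D_m}(\mathcal V)$ is true, then $\mathcal V$ is a nil-variety satisfying $\mathcal V\le\mathcal C_m$ and maximal among nil-subvarieties of $\mathcal C_m$; as $\Nil(\mathcal C_m)$ is the greatest such subvariety, maximality forces $\mathcal V=\Nil(\mathcal C_m)=\mathcal D_m$. Conversely, $\mathcal D_m$ is itself a nil-variety with $\mathcal D_m\le\mathcal C_m$ and, being the greatest nil-subvariety of $\mathcal C_m$, is maximal among them, so $\mathtt{D_m}(\mathcal D_m)$ holds. The cases $m=1$ and $m=2$ are consistent with the conventions $\mathcal D_1=\mathcal T$ and $\mathcal D_2=\mathcal N_\omega$ noted above.

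I do not anticipate any serious obstacle: the argument reduces entirely to the already-established definability of $\mathcal C_m$ and of the set of nil-varieties, combined with the standard structural fact that $\Nil(\mathcal C_m)$ is the greatest nil-subvariety of $\mathcal C_m$. The only point that requires a moment's care is confirming that ``greatest'' upgrades to ``unique maximal'', which is exactly what guarantees that the $\max\nolimits_x$ construction singles out $\mathcal D_m$ and nothing else.
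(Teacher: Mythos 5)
Your proof is correct and follows essentially the same route as the paper: both characterize $\mathcal D_m$ as the greatest nil-subvariety of the already-definable variety $\mathcal C_m$, using $\mathtt{C_m}(x)$ and $\mathtt{Nil}(x)$ as the only ingredients. The paper phrases this via an explicit binary predicate $\mathtt{Nil\text{-}part}(x,y)$ asserting ``$y=\Nil(x)$'' (which it reuses later), while you use the $\max\nolimits_x$ shorthand together with the observation that a greatest element is the unique maximal one; the difference is purely presentational.
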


\begin{proof}
Put
$$\mathtt{Nil\text{-}part}(x,y)\;\rightleftharpoons\;\mathtt{Per}(x)\,\&\,y
\le x\,\&\,\mathtt{Nil}(y)\,\&\,(\forall z)\,\bigl(z\le x\,\&\,\mathtt{Nil}
(z)\longrightarrow z\le y\bigr)\ldotp$$
Clearly, for semigroup varieties $\mathcal X$ and $\mathcal Y$, the sentence
$\mathtt{Nil\text{-}part}(\mathcal{X,Y})$ is true if and only if $\mathcal X$
is periodic and $\mathcal{Y=\Nil(X)}$. Let $\mathtt{C_m}$ be the formula
defining the variety $\mathcal C_m$. The variety $\mathcal D_m$ is defined by
the formula
$$\mathtt{D_m}(x)\;\rightleftharpoons\;(\exists y)\,\bigl(\mathtt{C_m}(y)\,
\&\,\mathtt{Nil\text{-}part}(y,x)\bigr)$$
because $\mathcal D_m=\Nil(\mathcal C_m)$.
\end{proof}

To prove the definability of an arbitrary Abelian periodic group variety, we
need some definitions, notation and an auxiliary result. We denote by \textbf
{Com} the lattice of all commutative semigroup varieties. We call a
commutative semigroup variety 0-\emph{reduced in} \textbf{Com} if it may be
given by the commutative law and some non-empty set of 0-reduced identities
only. If $\mathcal X$ is a commutative nil-variety of semigroups then we
denote by $\ZR(\mathcal X)$ the least 0-reduced in \textbf{Com} variety that
contains $\mathcal X$. Clearly, the variety $\ZR(\mathcal X)$ is given by the
commutative law and all 0-reduced identities that hold in $\mathcal X$. If
$u$ is a word and $x$ is a letter then $c(u)$ denotes the set of all letters
occurring in $u$, while $\ell_x(u)$ stands for the number of occurrences of
$x$ in $u$.

\begin{lemma}
\label{A_n key}
Let $m$ and $n$ be natural numbers with $m>2$ and $n>1$. The following are
equivalent:
\begin{itemize}
\item[\textup{(i)}]$\Nil(\mathcal A_n\vee\mathcal{X)=\ZR(X)}$ for any variety
$\mathcal{X\subseteq D}_m$;
\item[\textup{(ii)}]$n\ge m-1$.
\end{itemize}
\end{lemma}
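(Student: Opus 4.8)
The plan is to translate the statement into a purely combinatorial question about commutative identities. Throughout I identify a commutative word $u$ with its vector of exponents $(\ell_x(u))_x$, so that multiplication is addition of vectors and a word is zero in $\mathcal D_m$ precisely when $\ell_x(u)\ge m$ for some $x$. First I would show that, for a nil-variety $\mathcal X\subseteq\mathcal D_m$, the variety $\Nil(\mathcal A_n\vee\mathcal X)$ is obtained from $\mathcal A_n\vee\mathcal X$ by collapsing its ``group core'' (the union of all subgroups of the relatively free object) to zero; the point is that in a nil-variety an identity $u=u^r$ with $r\ge 2$ forces $u=0$, so the preimage of that core is exactly the zero-class of $\mathcal X$. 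Comparing the resulting congruence with that of $\ZR(\mathcal X)$ yields the key reduction: $\Nil(\mathcal A_n\vee\mathcal X)=\ZR(\mathcal X)$ holds if and only if $\mathcal X$ satisfies no identity $u=v$ with $u\ne v$, both sides nonzero in $\mathcal X$, and $\ell_x(u)\equiv\ell_x(v)\pmod n$ for every letter $x$ (this last condition being exactly ``$u=v$ holds in $\mathcal A_n$''). Establishing this reduction is the main conceptual obstacle.

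It then remains to decide, for all $\mathcal X\subseteq\mathcal D_m$ simultaneously, whether such a ``balanced, mod-$n$'' identity can exist. For (ii)$\Rightarrow$(i), assume $n\ge m-1$ and suppose $u=v$ is such an identity with both sides nonzero, so $\ell_x(u),\ell_x(v)\le m-1$ with $\ell_x(u)\equiv\ell_x(v)\pmod n$ for all $x$. If $n\ge m$ the residues of $0,1,\dots,m-1$ are distinct modulo $n$, forcing $u=v$, a contradiction. If $n=m-1$ the only coincidence is $0\equiv m-1$, so $u$ and $v$ differ only by interchanging some blocks $x^{m-1}$ with absent letters; writing $u=wP$, $v=wQ$ with $P=\prod_{i\in I}x_i^{m-1}$, $Q=\prod_{j\in J}x_j^{m-1}$ ($I,J$ disjoint, not both empty), I would force a contradiction by pushing an exponent up to $m$. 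Concretely, when $I,J$ are both nonempty the substitution $x_{i_0}\mapsto x_{i_0}x_{j_0}$ (for $i_0\in I$, $j_0\in J$) turns $u=v$ into $u\,x_{j_0}^{m-1}=u$, and multiplying once more by $x_{j_0}^{m-1}$ makes the left side zero since $2(m-1)\ge m$; the degenerate cases $I=\emptyset$ or $J=\emptyset$ are handled by multiplying $u=v$ directly by $P$ (respectively $Q$). In every case $u=0$, contradicting that $u=v$ was balanced.

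For (i)$\Rightarrow$(ii) I argue contrapositively: if $n\le m-2$ I exhibit a witness. Put $\mathcal X=\var\{xy=yx,\ x^m=0,\ xy^{n+1}=x^{n+1}y\}$; then $xy^{n+1}=x^{n+1}y$ is an identity whose two $\ell_x$-vectors are congruent modulo $n$, with distinct sides, and with all exponents $\le m-1$. The only thing to verify is that $xy^{n+1}\ne 0$ in $\mathcal X$, which I would do by analysing the relatively free two-generated object directly: reduction of exponents modulo $n$ is invariant under every substitution instance of the defining identity, and a short inspection of the rewrite rules shows that the only words reachable from $xy^{n+1}$ are $xy^{n+1}$ and $x^{n+1}y$, neither of which has an exponent $\ge m$. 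Hence $xy^{n+1}$ is a genuine nonzero balanced identity compatible with $\mathcal A_n$, so by the reduction $\Nil(\mathcal A_n\vee\mathcal X)\subsetneq\ZR(\mathcal X)$. Combining the two implications gives the equivalence, with the expected difficulty concentrated in the reduction of the first paragraph and in the boundary case $n=m-1$.
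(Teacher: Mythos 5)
Your proposal is correct, and for the main implication it takes a genuinely different route from the paper. The paper never describes $\Nil(\mathcal A_n\vee\mathcal X)$ structurally: for (ii)$\Rightarrow$(i) it proves the variety equality $\mathcal A_n\vee\mathcal X=\mathcal A_n\vee\ZR(\mathcal X)$ by showing every identity of $\mathcal A_n\vee\mathcal X$ holds in $\mathcal A_n\vee\ZR(\mathcal X)$, and then squeezes $\Nil(\mathcal A_n\vee\mathcal X)$ between $\ZR(\mathcal X)$ and $\ZR(\mathcal X)$ using the observation that $u=0$ in $\mathcal X$ forces $u^{n+1}=u$ in the join. You instead compute the relatively free object of $\Nil(\mathcal A_n\vee\mathcal X)$ as the Rees quotient of $F(\mathcal A_n\vee\mathcal X)$ by the group core, identify that core with the zero-class of $\mathcal X$, and thereby obtain a clean necessary-and-sufficient criterion (existence of a nontrivial identity of $\mathcal X$ with both sides nonzero and balanced modulo $n$) that handles both implications of the lemma uniformly; the paper's (i)$\Rightarrow$(ii) uses exactly your witness $\var\{xy=yx,\ x^m=0,\ xy^{n+1}=x^{n+1}y\}$ but justifies its nonvanishing more tersely via ``not 0-reduced in \textbf{Com}.'' Your criterion is the more transparent and reusable statement, at the price of having to verify the standard facts you defer: that the congruence of $F(\mathcal A_n\vee\mathcal X)$ is the intersection of those of $\mathcal A_n$ and $\mathcal X$, that the group core is an ideal whose Rees quotient really is $F(\Nil(\mathcal A_n\vee\mathcal X))$ and not merely some nil-quotient, and that $F(\ZR(\mathcal X))$ is the Rees quotient of the free commutative semigroup by the vanishing ideal of $\mathcal X$; all of these are true and routine. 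The downstream combinatorics are the same in spirit as the paper's (bounding exponents by $m-1$, splitting into $n\ge m$ versus $n=m-1$, and a substitution trick to push an exponent past $m$; the paper's trick is to substitute the zero element for a letter absent from one side, yours is a letter-doubling substitution). Two harmless slips: the substitution $x_{i_0}\mapsto x_{i_0}x_{j_0}$ turns $u=v$ into $u\,x_{j_0}^{m-1}=v$, and only together with $u=v$ itself gives $u\,x_{j_0}^{m-1}=u$; and in the degenerate cases the roles are swapped --- when $I=\emptyset$ one must multiply by $Q$, and when $J=\emptyset$ by $P$, since otherwise the multiplier is the empty product.
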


\begin{proof}
(i)$\longrightarrow$(ii) Suppose that $n<m-1$. Let $\mathcal X$ be the
subvariety of $\mathcal D_m$ given within $\mathcal D_m$ by the identity
\begin{equation}
\label{A_n key ident}
x^{n+1}y=xy^{n+1}\ldotp
\end{equation}
Since $n+1<m$, the variety $\mathcal X$ is not 0-reduced in \textbf{Com}. The
identity \eqref{A_n key ident} holds in the variety $\mathcal A_n\vee\mathcal
X$, and therefore in the variety $\Nil(\mathcal A_n\vee\mathcal X)$. But the
latter variety does not satisfy the identity $x^{n+1}y=0$ because this
identity fails in $\mathcal X$. We see that the variety $\Nil(\mathcal A_n
\vee\mathcal X)$ is not 0-reduced in \textbf{Com}. Since the variety $\ZR
(\mathcal X)$ is 0-reduced in \textbf{Com}, we are done.

(ii)$\longrightarrow$(i) Let $n\ge m-1$ and $\mathcal{X\subseteq D}_m$. One
can verify that $\mathcal A_n\vee\mathcal{X=A}_n\vee\ZR(\mathcal X)$. Note
that this equality immediately follows from Lemma 2.5 of \cite
{Shaprynskii-dnel} whenever $n\ge m$. We reproduce here the corresponding
arguments for the sake of completeness. It suffices to check that $\mathcal
A_n\vee\mathcal{\ZR(X)\subseteq A}_n\vee\mathcal X$ because the opposite
inclusion is evident. Suppose that the variety $\mathcal A_n\vee\mathcal X$
satisfies an identity $u=v$. We need to prove that this identity holds in
$\mathcal A_n\vee\ZR(\mathcal X)$. Since $u=v$ holds in $\mathcal A_n$, we
have $\ell_x(u)\equiv\ell_x(v)(\text{mod}\,n)$ for any letter $x$. If $\ell_x
(u)=\ell_x(v)$ for all letters $x$ then $u=v$ holds in $\mathcal A_n\vee\ZR
(\mathcal X)$ because this variety is commutative. Therefore we may assume
that $\ell_x(u)\ne\ell_x(v)$ for some letter $x$. Then either $\ell_x(u)\ge
n$ or $\ell_x(v)\ge n$. We may assume without any loss that $\ell_x(u)\ge n$.
Suppose that $n\ge m$. Then the identity $u=0$ holds in the variety $\mathcal
D_m$, whence it holds in $\mathcal X$. This implies that $v=0$ holds in
$\mathcal X$ too. Therefore the variety $\ZR(\mathcal X)$ satisfies the
identities $u=0=v$. Since the identity $u=v$ holds in $\mathcal A_n$, it
holds in $\mathcal A_n\vee\ZR(\mathcal X)$, and we are done.

It remains to consider the case $n=m-1$. If $\ell_x(u)\ge m$ or $\ell_x(v)\ge
m$ for some letter $x$, we go to the situation considered in the previous
paragraph. Thus, for any letter $x\in c(u)\cup c(v)$, either $\ell_x(u)=n-1$
and $x\notin c(v)$ or $x\notin c(u)$ and $\ell_x(v)=n-1$. We may assume
without any loss that the latter is the case. In particular $x\notin c(u)$.
Substituting 0 for $x$ in $u=v$, we obtain that the variety $\mathcal X$
satisfies the identity $u=0$. We go to the situation considered in the
previous paragraph again.

We have proved that $\mathcal A_n\vee\mathcal{X=A}_n\vee\ZR(\mathcal X)$.
Therefore $\ZR(\mathcal{X)\subseteq\Nil(A}_n\vee\mathcal X)$. If the variety
$\mathcal X$ satisfies an identity $u=0$ then $u^{n+1}=u$ holds in $\mathcal
A_n\vee\mathcal X$. This readily implies that $u=0$ in $\Nil(\mathcal A_n\vee
\mathcal X)$. Hence $\Nil(\mathcal A_n\vee\mathcal{X)\subseteq\ZR(X)}$. Thus
$\Nil(\mathcal A_n\vee\mathcal{X)=\ZR(X)}$.
\end{proof}

Now we are well prepared to prove the announced above

\begin{theorem}
\label{A_n}
An arbitrary Abelian periodic group variety is definable.
\end{theorem}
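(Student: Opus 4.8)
The plan is to recover, by a first-order formula, the exponent $n$ of an Abelian periodic group variety $\mathcal A_n$, using Lemma~\ref{A_n key} as the instrument that breaks the obvious symmetry among the atoms $\mathcal A_p$. First I would record that the Abelian periodic group varieties are exactly the varieties $\mathcal A_n$ with $n\ge1$ and that, as a set, they are cut out by $\mathtt{Gr}(x)\,\&\,\mathtt{Com}(x)$ (a commutative periodic group variety; both conjuncts are already definable). The variety $\mathcal A_1=\mathcal T$ is the least element of $\mathbf{SEM}$, hence defined by $(\forall y)\,(x\le y)$, so I would treat it separately and assume $n>1$ from now on, which is precisely the range covered by Lemma~\ref{A_n key}.

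The core task is to convert condition (i) of Lemma~\ref{A_n key} into a formula $\mathtt{cond}_m(x)$ that, when $x=\mathcal A_n$ with $n>1$, is true if and only if $n\ge m-1$. Most ingredients are already available: the variety $\mathcal D_m$ is definable by Proposition~\ref{x^m=0,xy=yx}, and for a periodic variety the operator $\Nil$ is captured by the formula $\mathtt{Nil\text{-}part}$ from the proof of that proposition (note that $\mathcal A_n\vee\mathcal X$ is periodic whenever $\mathcal X\subseteq\mathcal D_m$, so $\mathtt{Nil\text{-}part}$ genuinely returns $\Nil(\mathcal A_n\vee\mathcal X)$). The one notion still to be rendered first-order is ``$0$-reduced in $\mathbf{Com}$''. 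Here I would first observe that, for every $\mathcal X\subseteq\mathcal D_m$, one has the unconditional chain $\mathcal X\subseteq\Nil(\mathcal A_n\vee\mathcal X)\subseteq\ZR(\mathcal X)$; the right-hand inclusion is exactly the computation in the last paragraph of the proof of Lemma~\ref{A_n key} (if $\mathcal X$ satisfies $u=0$ then $u^{n+1}=u$ holds in $\mathcal A_n\vee\mathcal X$, forcing $u=0$ in the nil-part), and that computation never uses $n\ge m-1$. Consequently condition (i) is equivalent to requiring merely that $\Nil(\mathcal A_n\vee\mathcal X)$ be $0$-reduced in $\mathbf{Com}$ for all $\mathcal X\subseteq\mathcal D_m$, since a $0$-reduced-in-$\mathbf{Com}$ variety lying between $\mathcal X$ and $\ZR(\mathcal X)$ must equal $\ZR(\mathcal X)$.

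To make ``$0$-reduced in $\mathbf{Com}$'' first-order I would use the elementary fact that a commutative nil-variety $\mathcal N$ is given by the commutative law together with a (nonempty) set of $0$-reduced identities if and only if $\mathcal N=\mathcal{COM}\wedge\mathcal R$ for some $0$-reduced variety $\mathcal R$; the nonemptiness is automatic here because $\Nil(\mathcal A_n\vee\mathcal X)$ is always a proper subvariety of $\mathcal{COM}$. Since $\mathcal{COM}$ is definable (Proposition~\ref{COM}) and the $0$-reduced varieties are definable (Theorem~\ref{0-red}), this predicate is expressed by $(\exists r,c)\bigl(\mathtt{COM}(c)\,\&\,\mathtt{0\text{-}red}(r)\,\&\,\nu=c\wedge r\bigr)$. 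Assembling these pieces yields $\mathtt{cond}_m(x)$, and then for each fixed $n\ge2$ the formula $\mathtt{Gr}(x)\,\&\,\mathtt{Com}(x)\,\&\,\mathtt{cond}_{n+1}(x)\,\&\,\neg\mathtt{cond}_{n+2}(x)$ defines $\mathcal A_n$: evaluated at $\mathcal A_{n'}$ it holds exactly when $n'\ge n$ and $n'<n+1$, i.e.\ when $n'=n$.

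The step I expect to be the main obstacle is the faithful translation of condition (i), and in particular pinning down the ``$0$-reduced in $\mathbf{Com}$'' predicate: one must verify the unconditional two-sided estimate $\mathcal X\subseteq\Nil(\mathcal A_n\vee\mathcal X)\subseteq\ZR(\mathcal X)$, confirm that the meet description $\mathcal N=\mathcal{COM}\wedge\mathcal R$ of $0$-reduced-in-$\mathbf{Com}$ varieties is exact, and keep careful track of the hypotheses $m>2$, $n>1$ under which Lemma~\ref{A_n key} is valid. Once these routine but slightly delicate points are in place, the remainder is bookkeeping with predicates that have already been shown definable in the earlier sections.
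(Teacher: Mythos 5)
Your proposal is correct and follows essentially the same route as the paper: it isolates the Abelian periodic group varieties by $\mathtt{Com}(x)\,\&\,\mathtt{Gr}(x)$, encodes ``0-reduced in \textbf{Com}'' as a meet $\mathcal{COM}\wedge\mathcal R$ with $\mathcal R$ 0-reduced, translates condition (i) of Lemma \ref{A_n key} into a threshold predicate via $\mathtt{D_m}$ and $\mathtt{Nil\text{-}part}$, and recovers $n$ as the difference of two consecutive thresholds. Your only deviation --- replacing the paper's explicit clause ``$t=\ZR(z)$'' by ``$t$ is 0-reduced in \textbf{Com}'', justified by the unconditional sandwich $\mathcal X\subseteq\Nil(\mathcal A_n\vee\mathcal X)\subseteq\ZR(\mathcal X)$ --- is a sound minor streamlining of the same formula, not a different argument.
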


\begin{proof}
Abelian periodic group varieties are exhausted by the trivial variety and the
varieties $\mathcal A_n$ with $n>1$. The trivial variety is obviously
definable. For brevity, put
\begin{align*}
&\mathtt{Ab}(x)\;\rightleftharpoons\;\mathtt{Com}(x)\,\&\,\mathtt{Gr}(x),\\
&\mathtt{Com\text{-}0\text{-}red}(x)\;\rightleftharpoons\;(\exists y,z)\,
\bigl(\mathtt{COM}(y)\,\&\,\mathtt{0\text{-}red}(z)\,\&\,x=y\wedge z\bigr),\\
&\mathtt{ZR}(x,y)\;\rightleftharpoons\;\mathtt{Com\text{-}0\text{-}red}(y)\,
\&\,x\le y\,\&\,(\forall z)\,\bigl(\mathtt{Com\text{-}0\text{-}red}(z)\,\&\,x
\le z\longrightarrow y\le z\bigr)\ldotp
\end{align*}
The formula $\mathtt{Ab}(x)$ [respectively $\mathtt{Com\text{-}0\text{-}red}
(x)$] defines the set of all Abelian periodic group varieties [respectively
all 0-reduced in \textbf{Com} varieties] and, for semigroup varieties
$\mathcal X$ and $\mathcal Y$, the sentence $\mathtt{ZR}(\mathcal{X,Y})$ is
true if and only if $\mathcal{Y=\ZR(X)}$. Let $m$ be a natural number with
$m>2$. In view of Lemma \ref{A_n key}, the formula
$$\mathtt{A_{\ge m-1}}(x)\;\rightleftharpoons\;\mathtt{Ab}(x)\,\&\,(\forall
y,z,t)\,\bigl(\mathtt{D_m}(y)\,\&\,z\le y\,\&\,\mathtt{Nil\text{-}part}(x\vee
z,t)\longrightarrow\mathtt{ZR}(z,t)\bigr)$$
defines the set of varieties $\{\mathcal A_n\mid n\ge m-1\}$. Therefore the
formula
$$\mathtt{A_n}(x)\;\rightleftharpoons\;\mathtt{A_{\ge n}}(x)\,\&\,\neg\mathtt
{A_{\ge n+1}}(x)$$
defines the variety $\mathcal A_n$.
\end{proof}

\begin{corollary}
\label{comm monoid def}
A semigroup variety generated by a commutative monoid is definable.
\end{corollary}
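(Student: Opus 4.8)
The plan is to reduce the claim to the definability results already established for the two natural building blocks of such a variety --- an Abelian periodic group variety and a variety $\mathcal C_m$ --- and then to invoke the elementary fact that the join of two individually definable varieties is again definable. First I would observe that a variety $\mathcal V$ generated by a commutative monoid is itself commutative, so that, by the dichotomy recalled in Section \ref{sublattices}, it is either overcommutative or periodic. These two cases will be treated separately.

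If $\mathcal V$ is not periodic, then it is overcommutative and hence contains $\mathcal{COM}$; being commutative it is also contained in $\mathcal{COM}$, so $\mathcal{V=COM}$, which is definable by Proposition \ref{COM}. (This case genuinely occurs, since the additive monoid of non-negative integers generates $\mathcal{COM}$.) Suppose now that $\mathcal V$ is periodic. Then Lemma \ref{comm monoid} applies and yields $\mathcal{V=G}\vee\mathcal C_m$ for some Abelian periodic group variety $\mathcal G$ and some $m\ge0$.

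The key step is that $\mathcal G$ and $\mathcal C_m$ are each \emph{individually} definable: the variety $\mathcal G$ by Theorem \ref{A_n} (the trivial variety being obviously definable, and each $\mathcal A_n$ being defined by the formula $\mathtt{A_n}(x)$), and the variety $\mathcal C_m$ by Proposition \ref{C_m} via the formula $\mathtt{C_m}(x)$. Writing $\mathtt G(x)$ for the formula defining $\mathcal G$, the variety $\mathcal{V=G}\vee\mathcal C_m$ is then defined by
$$(\exists y,z)\,\bigl(\mathtt G(y)\,\&\,\mathtt{C_m}(z)\,\&\,x=y\vee z\bigr),$$
since there is a unique variety satisfying $\mathtt G$ and a unique one satisfying $\mathtt{C_m}$, and the join $\vee$ is a lattice operation available in the language.

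I do not anticipate a genuine obstacle: all of the substantive work is already contained in Lemma \ref{comm monoid}, Theorem \ref{A_n}, and Proposition \ref{C_m}, and what remains is bookkeeping. The only points demanding a little care are, first, that the formula witnessing definability depends on which $\mathcal G$ and which $m$ arise, so the argument establishes that \emph{each} individual variety generated by a commutative monoid is definable rather than producing one uniform formula; and second, that one must not overlook the degenerate non-periodic case, which collapses to $\mathcal{COM}$ and is handled by Proposition \ref{COM}.
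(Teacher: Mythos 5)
Your proof is correct and follows essentially the same route as the paper: decompose $\mathcal V$ as $\mathcal A_n\vee\mathcal C_m$ via Lemma \ref{comm monoid} and then combine the single-variety formulas $\mathtt{A_n}(y)$ and $\mathtt{C_m}(z)$ with a join. The only substantive difference is that you explicitly dispose of the non-periodic case (where $\mathcal{V=COM}$ and Proposition \ref{COM} applies), a point the paper's proof passes over silently even though Lemma \ref{comm monoid} is stated only for periodic varieties.
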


\begin{proof}
Let $\mathcal V$ be a variety generated by some commutative monoid. According
to Lemma \ref{comm monoid}, $\mathcal{V=A}_n\vee\mathcal C_m$ for some $n\ge
1$ and $m\ge0$. It is easy to check that the parameters $n$ and $m$ in this
decomposition are defined uniquely. Therefore the formula
$$(\exists y,z)\,\bigl(\mathtt{A_n}(y)\,\&\,\mathtt{C_m}(z)\,\&\,x=y\vee z
\bigr)$$
defines the variety $\mathcal V$ (we assume here that $\mathtt{A_1}$ is the
evident formula defining the variety $\mathcal A_1=\mathcal T$).
\end{proof}

It was proved in \cite{Kisielewicz-04} that the set of all Abelian periodic
group varieties and each Abelian group variety are definable in the lattice
\textbf{Com}. Moreover, some characterization of all commutative semigroup
varieties definable in the lattice \textbf{Com} was found in \cite{Grech-09}.
Proposition \ref{COM} readily implies that a commutative semigroup variety is
definable in \textbf{SEM} whenever it is definable in \textbf{Com}. Thus
Theorem \ref{A_n} follows from results of \cite{Kisielewicz-04}. However the
articles \cite{Grech-09,Kisielewicz-04} contain no explicit first-order
formulas that define the set of all Abelian periodic group varieties or any
given Abelian periodic group variety or any other commutative variety in the
lattice \textbf{Com}.

\section{Finitely universal varieties}
\label{finitely universal}

Following \cite{Shevrin-Vernikov-Volkov-09}, we call a semigroup variety
\emph{finitely universal} if the subvariety lattice of this variety contains
an anti-isomorphic copy of the partition lattice over arbitrary finite
set. An interest to varieties with this property is motivated by the well
known fact that the subvariety lattice of a finitely universal variety does
not satisfy any non-trivial lattice identity.  It is known \cite
{Burris-Nelson-71-finite} that the variety $\mathcal{COM}$ is finitely
universal. Moreover, it is easy to see that $\mathcal{COM}$ is a minimal
finitely universal variety. Another known example of a minimal finitely
universal variety is the variety
$$\mathcal H=\var\,\{x^2=xyx=0\},$$
see \cite{Vernikov-Volkov-98}. The question whether or not minimal finitely
universal varieties differ from $\mathcal{COM}$ and $\mathcal H$ there exist
are unknown so far (see Section 12 of \cite{Shevrin-Vernikov-Volkov-09} for
more detailed comments). In this connection, it is interested to note that
both the varieties $\mathcal{COM}$ and $\mathcal H$ are definable.
The variety $\mathcal{COM}$ is definable by Proposition \ref{COM}. Here we
are going to check the definability of the variety $\mathcal H$. By the way,
we provide some other examples of definable 0-reduced varieties. Put
\begin{align*}
&\mathcal E_m=\var\,\{x^m=0\}\ (m\ \text{is a natural number}),\\
&\mathcal F=\var\,\{x^2y=xyx=yx^2=0\}\ldotp
\end{align*}
In particular, $\mathcal E_1=\mathcal T$.

\begin{proposition}
\label{E_m,F,H}
The varieties $\mathcal E_m$ \textup(for any natural number $m$\textup),
$\mathcal F$ and $\mathcal H$ are definable.
\end{proposition}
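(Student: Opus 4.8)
The plan is to handle $\mathcal E_m$ by one method and $\mathcal F,\mathcal H$ by another, using in every case that these three varieties are $0$-reduced and so already lie in the definable set cut out by $\mathtt{0\text{-}red}$. For $\mathcal E_m$ I would recover it from the already-definable commutative variety $\mathcal D_m$, via the equality $\mathcal E_m\wedge\mathcal{COM}=\mathcal D_m$ together with the claim that $\mathcal E_m$ is the \emph{greatest} $0$-reduced variety $\mathcal V$ with $\mathcal V\wedge\mathcal{COM}\le\mathcal D_m$. To prove the claim I would argue on the level of words: a $0$-reduced variety $\mathcal V$ is determined by the set $Z$ of words $w$ with $w=0$ in $\mathcal V$, and $\mathcal V\wedge\mathcal{COM}$ satisfies $x^m=0$ exactly when the monomial $x^m$ is commutatively divisible by $\bar w$ for some $w\in Z$. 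Since only powers of a single letter divide $x^m$, this forces $x^j\in Z$ for some $j\le m$, i.e.\ $\mathcal V$ itself satisfies $x^j=0$ and hence $\mathcal V\subseteq\mathcal E_j\subseteq\mathcal E_m$. The reverse inclusion being trivial, $\mathcal E_m$ (for $m\ge2$; the case $\mathcal E_1=\mathcal T$ is trivial) is defined by
$$\mathtt{E_m}(x)\;\rightleftharpoons\;\max\nolimits_x\bigl\{\mathtt{0\text{-}red}(x)\,\&\,(\exists y,z)\,(\mathtt{COM}(y)\,\&\,\mathtt{D_m}(z)\,\&\,x\wedge y\le z)\bigr\}\ldotp$$

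For $\mathcal F$ and $\mathcal H$ I expect the cleanest route to be to recognize each as a join of two varieties already known to be definable, and then to verify the join by comparing sets of nonzero words. Concretely I would check that $\mathcal H=\mathcal N_3^2\vee\mathcal N_\omega$ and $\mathcal F=\mathcal{NILP}_3\vee\mathcal N_\omega$. In both cases the nonzero words of the join are the union of the nonzero words of the two joinands: for $\mathcal H$ this union is exactly the set of square-free words, which is precisely the set of nonzero words of $\var\{x^2=xyx=0\}$; for $\mathcal F$ the union is the set of square-free words together with the isolated squares $a^2$, which is precisely the set of nonzero words of $\var\{x^2y=xyx=yx^2=0\}$. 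Since $\mathcal N_3^2$, $\mathcal{NILP}_3$ and $\mathcal N_\omega$ are definable (Proposition \ref{nil chain} and Theorem \ref{nilp}), this yields
$$\mathtt F(x)\;\rightleftharpoons\;(\exists y,z)\,\bigl(\mathtt{NILP_3}(y)\,\&\,\mathtt{N_\omega}(z)\,\&\,x=y\vee z\bigr),\qquad\mathtt H(x)\;\rightleftharpoons\;(\exists y,z)\,\bigl(\mathtt{N_3^2}(y)\,\&\,\mathtt{N_\omega}(z)\,\&\,x=y\vee z\bigr)\ldotp$$
As a cross-check one also has $\mathcal H=\mathcal F\wedge\mathcal E_2$, which gives an alternative formula for $\mathcal H$ from the two previous parts.

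The main obstacle is purely combinatorial and lies in these zero-word verifications. For $\mathcal E_m$ I must be sure that detecting $x^m=0$ in the commutative reduct cannot be achieved by a multi-letter zero word, so that it genuinely forces a one-letter relation $x^j=0$; this is exactly where $0$-reducedness is essential, since the implication fails for general varieties, as $\mathcal{LZ}$ (whose commutative reduct is trivial, yet $\mathcal{LZ}\nsubseteq\mathcal E_m$) shows. For $\mathcal F$ and $\mathcal H$ the work is to describe the zero words of $\var\{x^2y=xyx=yx^2=0\}$ and of $\var\{x^2=xyx=0\}$ as, respectively, the words in which some letter repeats apart from the isolated squares $a^2$, and the words in which some letter repeats, and then to match these against the unions computed from the joinands. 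All of this is routine once one recalls that a $0$-reduced variety is determined by its set of zero words, and that a word is zero in such a variety if and only if it contains, as a factor, a substitution instance of the left-hand side of one of the defining identities.
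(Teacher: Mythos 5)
Your treatment of $\mathcal E_m$ is essentially correct and runs parallel to the paper's: the paper takes the maximum over nil-varieties $\mathcal N$ with $\mathcal N\wedge\mathcal{COM}=\mathcal D_m$ and argues that an identity $x^m=u$ of a meet must hold non-trivially in one of the two varieties, while you take the maximum over $0$-reduced varieties with $\mathcal V\wedge\mathcal{COM}\le\mathcal D_m$ and argue on zero words; both routes give a valid defining formula.

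The $\mathcal F$ and $\mathcal H$ parts, however, contain a genuine error: the decompositions $\mathcal H=\mathcal N_3^2\vee\mathcal N_\omega$ and $\mathcal F=\mathcal{NILP}_3\vee\mathcal N_\omega$ are false. Your zero-word computation only shows that $\mathcal H$ (respectively $\mathcal F$) is the \emph{least $0$-reduced variety containing} the join; the join itself is not $0$-reduced and is strictly smaller. Concretely, the permutational identity $xyz=yxz$ holds in $\mathcal N_3^2$ and in $\mathcal{NILP}_3$ (both sides are zero words there) and in $\mathcal N_\omega$ (which is commutative), hence it holds in both joins; but it fails in $\mathcal H$ and in $\mathcal F$, where $xyz$ and $yxz$ are distinct nonzero elements of the relatively free semigroup. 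Thus $\mathcal N_3^2\vee\mathcal N_\omega\subsetneq\mathcal H$ and $\mathcal{NILP}_3\vee\mathcal N_\omega\subsetneq\mathcal F$, and your formulas $\mathtt F(x)$ and $\mathtt H(x)$ define the joins rather than the intended varieties. The principle you invoke --- that a join of varieties determined by their zero words is again determined by the union of the sets of nonzero words --- breaks down precisely because one joinand is commutative, so the join picks up permutational identities between distinct zero words of the other joinand. The paper avoids this by defining $\mathcal F$ as $\max\nolimits_x\bigl\{\mathtt{0\text{-}red}(x)\,\&\,\mathtt{Distr}(x)\bigr\}$, using the characterization of distributive elements of $\mathbf{SEM}$ from \cite{Vernikov-Shaprynskii-10} (a nil-variety is a distributive element if and only if it is $0$-reduced and satisfies $x^2y=xyx=yx^2=0$), and then obtains $\mathcal H$ from the meet $\mathcal H=\mathcal E_2\wedge\mathcal F$ --- which is exactly your ``cross-check'', and is the only part of your argument for $\mathcal H$ that survives, provided $\mathcal F$ is first defined by some correct means.
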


\begin{proof}
One can prove that the variety $\mathcal E_m$ is definable by the formula
$$\mathtt{E_m}(x)\;\rightleftharpoons\;\max\nolimits_x\bigl\{\mathtt{Nil}
(x)\,\&\,(\exists y,z)\,\bigl(\mathtt{COM}(y)\,\&\,\mathtt{D_m}(z)\,\&\,x
\wedge y=z\bigr)\bigr\}\ldotp$$
In other words, we are going to check that $\mathcal E_m$ is the greatest
nil-variety $\mathcal N$ with the property $\mathcal{N\wedge COM=D}_m$. The
equality $\mathcal E_m\wedge\mathcal{COM=D}_m$ is evident. Let $\mathcal N$
be a nil-variety with $\mathcal{N\wedge COM=D}_m$. Then the variety $\mathcal
{N \wedge COM}$ satisfies the identity $x^m=0$. Therefore one of the
varieties $\mathcal N$ and $\mathcal{COM}$ satisfies a non-trivial identity
of the form $x^m=u$. It is evident that $\mathcal{COM}$ does not satisfy any
non-trivial identity of such the form. Therefore the identity $x^m=u$ holds
in $\mathcal N$. It is easy to see that this identity implies $x^m=0$ in
arbitrary nil-variety. Thus $\mathcal N$ satisfies the identity $x^m=0$, that
is $\mathcal{N\subseteq E}_m$.

Put
$$\mathtt{Distr}(x)\;\rightleftharpoons\;(\forall y,z)\;\bigl(x\vee(y\wedge
z)=(x\vee y)\wedge(x\vee z)\bigr)\ldotp$$
An element $x$ of a lattice $L$ such that the sentence $\mathtt{Distr}(x)$ is
true is called \emph{distributive}. Distributive elements in the lattice
\textbf{SEM} are completely determined in \cite{Vernikov-Shaprynskii-10}. In
particular, it is proved there that a nil-variety is a distributive element
of \textbf{SEM} if and only if it is 0-reduced and satisfies the identities
$x^2y=xyx=yx^2=0$. Therefore the formula
$$\mathtt F(x)\;\rightleftharpoons\;\max\nolimits_x\bigl\{\mathtt{0\text{-}red}
(x)\,\&\,\mathtt{Distr}(x)\bigr\}$$
defines the variety $\mathcal F$. Since $\mathcal{H=E}_2\wedge\mathcal F$,
the formula
$$\mathtt H(x)\;\rightleftharpoons\;(\exists y,z)\,\bigl(\mathtt{E_2}(y)\,
\&\,\mathtt F(z)\,\&\,x=y\wedge z\bigr)$$
defines the variety $\mathcal H$.
\end{proof}

\section{Permutative varieties}
\label{permutative}

An identity of the form
\begin{equation}
\label{permut id}
x_1x_2\cdots x_n=x_{1\alpha}x_{2\alpha}\cdots x_{n\alpha}
\end{equation}
where $\alpha$ is a non-trivial permutation on the set $\{1,2,\dots,n\}$ is
called \emph{permutational}. The number $n$ is called a \emph{length} of this
identity. A semigroup variety is called \emph{permutative} if it satisfies
some permutational identity. Permutative varieties are natural and important
generalization of commutative ones. Here we are going to prove the
definability of the set of all permutative varieties and certain its
important members and subset.

\begin{theorem}
\label{permut}
The set of all permutative semigroup varieties is definable.
\end{theorem}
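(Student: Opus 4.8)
The plan is to exploit the fact that permutativity is inherited by subvarieties: if $\mathcal V$ satisfies a permutational identity then, being an identity, it holds in every subvariety of $\mathcal V$. Hence the set of all permutative varieties is an order ideal (a down-set) of $\mathbf{SEM}$, and to define it it suffices to forbid the \emph{minimal} non-permutative varieties. So the first step is to pin down, via the structural theory of semigroup varieties collected in \cite{Shevrin-Vernikov-Volkov-09}, the complete list of these minimal obstructions. The expected answer is that $\mathcal V$ fails to be permutative exactly when it contains one of: (i) a non-abelian group; (ii) the left- or right-regular band variety $\mathcal{LRB}=\var\{x^2=x,\,xyx=xy\}$ or $\mathcal{RRB}=\var\{x^2=x,\,xyx=yx\}$; or (iii) the nil-variety $\mathcal H=\var\{x^2=xyx=0\}$. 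The sanity checks are reassuring: every atom of $\mathbf{SEM}$ is commutative, a left/right zero, or a null variety, hence permutative; a group is permutative iff it is abelian; and in the relatively free objects of $\mathcal{LRB}$, $\mathcal{RRB}$ and $\mathcal H$ products of distinct letters in distinct orders are pairwise distinct, so no nontrivial permutational identity can hold, while every proper subvariety of each of these is permutative.

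With the characterization in hand, each forbidden kind is expressible through classes already shown definable in the earlier sections. A non-abelian group subvariety is detected by $\mathtt{Gr}(y)\,\&\,\neg\mathtt{Ab}(y)$ (a periodic group variety that is not commutative); this suffices because every non-abelian group variety contains a minimal non-abelian group subvariety and all such minimal varieties are of finite exponent, hence periodic. The variety $\mathcal H$ is definable by the formula $\mathtt H(y)$ of Proposition \ref{E_m,F,H}. The two-element set $\{\mathcal{LRB},\mathcal{RRB}\}$ can be defined up to duality by a formula $\mathtt{RB}(y)$ built from $\mathtt{SL}$, the formula $\mathtt{LZ\text{-}and\text{-}RZ}$ and the lattice covering relation, in the same spirit as the treatment of $\{\mathcal P,\overleftarrow{\mathcal P}\}$ in Proposition \ref{P and P*}. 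Combining these, the set of all permutative varieties is defined by
$$\mathtt{Perm}(x)\;\rightleftharpoons\;(\forall y)\,\Bigl(y\le x\longrightarrow\neg\bigl(\bigl(\mathtt{Gr}(y)\,\&\,\neg\mathtt{Ab}(y)\bigr)\ \text{\textsc{or}}\ \mathtt{RB}(y)\ \text{\textsc{or}}\ \mathtt H(y)\bigr)\Bigr),$$
that is, $\mathcal V$ is permutative iff it has no subvariety of any of the three forbidden kinds. Note that overcommutative permutative varieties (such as $\mathcal{COM}\vee\mathcal{LZ}$) require no special handling: a non-permutative overcommutative variety still contains a minimal non-permutative variety, so it is caught automatically once the list is complete.

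The lattice-theoretic bookkeeping above is routine; the genuine work lies in the structural input, and this is where I expect the main obstacle. Concretely, I must establish that the list of minimal non-permutative varieties is \emph{exactly} $\mathcal H$, $\mathcal{LRB}$, $\mathcal{RRB}$ together with the minimal non-abelian group varieties — in particular that $\mathcal H$ is the only minimal non-permutative nil-variety (every non-permutative nil-variety being non-nilpotent and forced to contain $\mathcal H$) and that $\mathcal{LRB},\mathcal{RRB}$ are the only minimal non-permutative completely regular varieties, with no further sporadic combinatorial obstructions. A closely related subtlety is the justification that the group-theoretic obstructions may be tested on periodic group subvarieties alone, which rests on the classification of minimal non-abelian group varieties as periodic. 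Finally, I would need to carry out the one genuinely new definability computation, namely that of the auxiliary set $\{\mathcal{LRB},\mathcal{RRB}\}$ via $\mathtt{RB}(x)$, this being the only forbidden class not already produced in Sections \ref{atoms and chain}--\ref{finitely universal}.
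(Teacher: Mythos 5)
There is a genuine gap: your list of forbidden subvarieties is incomplete, and the resulting formula is false. By Proposition 2 of \cite{Sapir-Sukhanov-81} (the structural input the paper relies on), a semigroup variety is permutative if and only if it contains none of the following: the minimal non-Abelian periodic group varieties, the varieties $\mathcal{LRB}$ and $\mathcal{RRB}$, the variety $\mathcal H$, \emph{and} the varieties of all completely simple semigroups over Abelian groups of exponent $p$, for each prime $p$. You omit this last family, and it is not subsumed by your three obstructions. Indeed, let $\mathcal V_p$ be the variety of completely simple semigroups over Abelian groups of exponent $p$. All its group subvarieties lie in $\mathcal A_p$, hence are Abelian; being completely regular, it contains no non-trivial nil-variety, so $\mathcal H\nsubseteq\mathcal V_p$; and its idempotent members are rectangular bands, so $\mathcal{LRB},\mathcal{RRB}\nsubseteq\mathcal V_p$ (both contain $\mathcal{SL}$, which is not completely simple). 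Thus your $\mathtt{Perm}(\mathcal V_p)$ evaluates to true. Yet $\mathcal V_p$ is not permutative: in a $2\times2$ Rees matrix semigroup over $\mathbb Z_p$ with a non-constant sandwich matrix, permuting the inner factors of a product changes which sandwich entries occur, so no non-trivial permutational identity holds. So your claim that $\mathcal{LRB}$ and $\mathcal{RRB}$ are the only completely regular obstructions is exactly where the argument breaks.

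The rest of your plan matches the paper's: permutative varieties form a down-set, one forbids the obstructions, and each obstruction class is expressed via formulas from the earlier sections (the paper defines $\{\mathcal{LRB},\mathcal{RRB}\}$ as the maximal idempotent varieties omitting one of $\mathcal{LZ},\mathcal{RZ}$, close to what you sketch). To repair your proof you must add a fourth conjunct forbidding the varieties $\mathcal V_p$; the paper does this by defining the set $\{\mathcal V_p\mid p\ \text{prime}\}$ as the maximal completely simple varieties whose greatest group subvariety is some $\mathcal A_p$, using the already definable predicates $\mathtt{CS}$, $\mathtt{Gr}$ and $\mathtt{GrA}$.
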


\begin{proof}
By Proposition 2 of \cite{Sapir-Sukhanov-81}, a semigroup variety is
permutative if and only if it does not contain all minimal non-Abelian
periodic group varieties, varieties of all completely simple semigroups over
Abelian groups of exponent $p$ for all prime $p$, the varieties
$$\mathcal{LRB}=\var\,\{x=x^2,\,xyx=xy\}\ \text{and}\ \mathcal{RRB}=\var\,
\{x=x^2,\,xyx=yx\},$$
and the variety $\mathcal H$. The set of all minimal non-Abelian periodic
group varieties is defined by the formula
$$\mathtt{JNAb}(x)\;\rightleftharpoons\;\min\nolimits_x\bigl\{\mathtt{Gr}
(x)\,\&\,\neg\mathtt{Com}(x)\bigr\}\ldotp$$
Put
$$\mathtt{Gr\text{-}part}(x,y)\;\rightleftharpoons\;\mathtt{Per}(x)\,\&\,y\le
x\,\&\,\mathtt{Gr}(y)\,\&\,(\forall z)\,\bigl(z\le x\,\&\,\mathtt{Gr}(z)
\longrightarrow z\le y\bigr)\ldotp$$
For semigroup varieties $\mathcal X$ and $\mathcal Y$, the sentence $\mathtt
{Gr\text{-}part}(\mathcal{X,Y})$ is true if and only if $\mathcal X$ is
periodic and $\mathcal Y$ is the greatest group subvariety of $\mathcal X$. A
variety of completely simple semigroups over groups of some prime exponent
$p$ is the largest completely simple variety $\mathcal V$ such that the
largest group subvariety of $\mathcal V$ is $\mathcal A_p$. Therefore the set
of all such varieties is defined by the formula
$$\mathtt{All\text{-}CSA_p}(x)\;\rightleftharpoons\;\max\nolimits_x\bigl
\{\mathtt{CS}(x)\,\&\,(\forall y)\,\bigl(\mathtt{Gr\text{-}part}(x,y)
\longrightarrow\mathtt{GrA}(y)\bigr)\bigr\}\ldotp$$
As is well known, the lattice of all varieties of idempotent semigroups has
the form shown in Fig.\ \ref{band varieties} where $\mathcal I=\var\{x=x^2\}$
(see \cite{Evans-71} or \cite{Shevrin-Vernikov-Volkov-09}, for instance). In
particular, we see that $\mathcal{LRB}$ [respectively $\mathcal{RRB}$] is the
largest variety of idempotent semigroups that does not contain the variety
$\mathcal{RZ}$ [respectively $\mathcal{LZ}$]. Therefore the set $\{\mathcal
{LRB,RRB}\}$ is defined by the formula
$$\mathtt{LRB\text{-}and\text{-}RRB}(x)\;\rightleftharpoons\;\max\nolimits_x
\bigl\{\mathtt{Idemp}(x)\,\&\,(\exists y)\,\bigl(\mathtt{LZ\text{-}and\text
{-}RZ}(y)\,\&\,y\nleq x\bigr)\bigr\}\ldotp$$
Combining the above observations we have that the formula
$$\mathtt{Perm}(x)\!\rightleftharpoons\!(\forall y)\bigl(y\le
x\!\longrightarrow\!\neg\mathtt{JNAb}(y)\&\neg\mathtt{All\text{-}CSA_p}(y)\&
\neg\mathtt{LRB\text{-}and\text{-}RRB}(y)\&\neg\mathtt H(y)\bigr)$$
defines the set of all permutative varieties.
\end{proof}

\begin{figure}[tbh]
\begin{center}
\unitlength=.8mm
\linethickness{.4pt}
\begin{picture}(42,88)
\put(1,15){\line(0,1){53}}
\put(1,15){\line(2,-1){20}}
\put(1,15){\line(2,1){20}}
\put(1,25){\line(2,-1){20}}
\put(1,25){\line(2,1){40}}
\put(1,45){\line(2,-1){40}}
\put(1,45){\line(2,1){40}}
\put(1,65){\line(2,-1){40}}
\put(1,65){\line(2,1){6}}
\put(21,5){\line(0,1){10}}
\put(21,5){\line(2,1){20}}
\put(21,15){\line(2,1){20}}
\put(21,25){\line(2,-1){20}}
\put(21,25){\line(0,1){10}}
\put(35,68){\line(2,-1){6}}
\put(41,15){\line(0,1){53}}
\dashline{0.65}(1,68)(1,78)
\dashline{0.65}(7,68)(17,73)
\dashline{0.65}(25,73)(35,68)
\dashline{0.65}(41,68)(41,78)
\put(1,15){\circle*{1.66}}
\put(1,25){\circle*{1.66}}
\put(1,35){\circle*{1.66}}
\put(1,45){\circle*{1.66}}
\put(1,55){\circle*{1.66}}
\put(1,65){\circle*{1.66}}
\put(21,5){\circle*{1.66}}
\put(21,15){\circle*{1.66}}
\put(21,25){\circle*{1.66}}
\put(21,35){\circle*{1.66}}
\put(21,55){\circle*{1.66}}
\put(21,85){\circle*{1.66}}
\put(41,15){\circle*{1.66}}
\put(41,25){\circle*{1.66}}
\put(41,35){\circle*{1.66}}
\put(41,45){\circle*{1.66}}
\put(41,55){\circle*{1.66}}
\put(41,65){\circle*{1.66}}
\put(21,88){\makebox(0,0)[cc]{$\mathcal I$}}
\put(0,35){\makebox(0,0)[rc]{$\mathcal{LRB}$}}
\put(0,15){\makebox(0,0)[rc]{$\mathcal{LZ}$}}
\put(42,35){\makebox(0,0)[lc]{$\mathcal{RRB}$}}
\put(42,15){\makebox(0,0)[lc]{$\mathcal{RZ}$}}
\put(21,12){\makebox(0,0)[cc]{$\mathcal{SL}$}}
\put(21,0){\makebox(0,0)[cc]{$\mathcal T$}}
\end{picture}
\caption{The lattice of varieties of idempotent semigroups}
\label{band varieties}
\end{center}
\end{figure}

For a natural number $n>1$, we denote by $\mathcal{PERM}_n$ the variety given
by all permutational identities of length $n$. In particular, $\mathcal
{PERM}_2=\mathcal{COM}$. The fact that the variety $\mathcal{COM}$ is
definable (see Proposition \ref{COM}) is a partial case of the following

\begin{proposition}
\label{permut-n}
For an arbitrary natural number $n$, the variety $\mathcal{PERM}_n$ is
definable.
\end{proposition}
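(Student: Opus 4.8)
The plan is to reduce $\mathcal{PERM}_n$ to varieties already shown to be definable, by establishing the equality
\begin{equation}
\label{perm-decomp}
\mathcal{PERM}_n=\mathcal{COM}\vee\mathcal{NILP}_n\ldotp
\end{equation}
Once \eqref{perm-decomp} is in hand the proposition is immediate: the variety $\mathcal{COM}$ is definable by Proposition \ref{COM}, the variety $\mathcal{NILP}_n$ is definable by Theorem \ref{nilp} via some formula $\mathtt{NILP_n}(x)$, and therefore the formula
$$\mathtt{PERM_n}(x)\;\rightleftharpoons\;(\exists y,z)\,\bigl(\mathtt{COM}(y)\,\&\,\mathtt{NILP_n}(z)\,\&\,x=y\vee z\bigr)$$
defines $\mathcal{PERM}_n$, because join is one of the lattice operations. (The case $n=2$ is consistent with $\mathcal{PERM}_2=\mathcal{COM}$, since $\mathcal{NILP}_2=\mathcal{ZM}\subseteq\mathcal{COM}$ and the right-hand side of \eqref{perm-decomp} collapses to $\mathcal{COM}$.) Thus the entire task is to prove \eqref{perm-decomp}, which I would do on the level of identities.

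First I would describe the identities of the join: an identity holds in $\mathcal{COM}\vee\mathcal{NILP}_n$ if and only if it holds both in $\mathcal{COM}$ and in $\mathcal{NILP}_n$. Holding in $\mathcal{COM}$ means $\ell_x(u)=\ell_x(v)$ for every letter $x$ (the identity is \emph{balanced}), while holding in $\mathcal{NILP}_n$ means that $u\equiv v$ or each of $u,v$ is a product of at least $n$ letters. Hence the non-trivial identities of $\mathcal{COM}\vee\mathcal{NILP}_n$ are precisely the balanced identities whose two sides are products of a common number $k\ge n$ of letters. The inclusion $\mathcal{COM}\vee\mathcal{NILP}_n\subseteq\mathcal{PERM}_n$ is then clear, since every permutational identity of length $n$ is balanced and has both sides of length $n$, so it holds in both $\mathcal{COM}$ and $\mathcal{NILP}_n$, whence in their join.

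The reverse inclusion $\mathcal{PERM}_n\subseteq\mathcal{COM}\vee\mathcal{NILP}_n$ is the substance of the matter: I must check that $\mathcal{PERM}_n$ satisfies every balanced identity $u=v$ whose sides are products of a common number $k\ge n$ of letters. This comes down to the purely combinatorial fact that the permutational identities of length $n$ entail all permutational identities of every length $k\ge n$, and I expect this to be the main obstacle. I would prove it by a ``packing'' argument combined with induction on $k$ with base case $k=n$: to realize an interchange of two letters inside a product of $k>n$ factors, one groups the $k$ letters into $k-1$ consecutive blocks so that exactly one block is a pair and the two letters to be transposed lie in distinct singleton blocks, and then applies a permutational identity of length $k-1$ to the blocks; since transpositions generate the full symmetric group, every permutation of a product of $k$ factors is obtained this way. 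Finally an arbitrary balanced identity $u=v$ of length $k$, possibly with repeated letters, follows by substituting the actual letters of $u$ for the distinct variables of the appropriate length-$k$ permutational identity. This gives $\mathcal{PERM}_n\subseteq\mathcal{COM}\vee\mathcal{NILP}_n$, completes the proof of \eqref{perm-decomp}, and hence establishes the proposition.
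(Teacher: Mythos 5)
Your proposal is correct and takes essentially the same route as the paper, which simply asserts the decomposition $\mathcal{PERM}_n=\mathcal{COM}\vee\mathcal{NILP}_n$ as ``easy to see'' and writes down exactly the formula $\mathtt{PERM_n}(x)$ that you give; you merely supply the identity-theoretic verification the paper omits. One small patch is needed in your packing argument: for small lengths not every transposition can be realized with both target letters in singleton blocks (e.g.\ for $k=4$ the transposition of positions $2$ and $3$ leaves no adjacent pair to merge, and for $n=2$ the step $k=2\to k=3$ fails outright), so you should either note that the realizable transpositions still form a connected graph and hence generate the full symmetric group, and treat $n=2$ separately via commutativity, or otherwise adjust the blocking.
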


\begin{proof}
It is easy to see that $\mathcal{PERM}_n=\mathcal{COM\vee NILP}_n$. Therefore
the formula
$$\mathtt{PERM_n}(x)\;\rightleftharpoons\;(\exists y,z)\,\bigl(\mathtt{COM}
(y)\,\&\,\mathtt{NILP_n}(z)\,\&\,x=y\vee z\bigr)$$
defines the variety $\mathcal{PERM}_n$.
\end{proof}

A semigroup variety is called \emph{strongly permutative} if it satisfies an
identity of the form \eqref{permut id} with $1\alpha\ne1$ and $n\alpha\ne n$.
It is proved in \cite{Putcha-Yaqub-71} that a variety $\mathcal V$ is
strongly permutative if and only if $\mathcal{V\subseteq PERM}_n$ for some
$n$.

\begin{theorem}
\label{strongly permut}
The set of all strongly permutative semigroup varieties is definable.
\end{theorem}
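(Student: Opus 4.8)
The plan is to lean on the quoted theorem of Putcha and Yaqub: a variety is strongly permutative precisely when it is contained in $\mathcal{PERM}_n$ for some $n$. Thus the strongly permutative varieties form exactly the order ideal generated by the chain $\{\mathcal{PERM}_n\mid n\ge2\}$, and it suffices to (a) define this chain as a single set and (b) assert that ``$x$ lies below some member of it''. Since $\mathcal{PERM}_n=\mathcal{COM}\vee\mathcal{NILP}_n$ (as in the proof of Proposition \ref{permut-n}) and $\mathcal{COM}$ is already definable (Proposition \ref{COM}), everything reduces to defining the family $\{\mathcal{NILP}_n\mid n\ge1\}$ by a single formula.

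To this end I would use that $\mathcal{NILP}_n$ is the largest nilpotent variety of degree $n$ (Theorem \ref{nilp}); in particular, within each fixed degree it is the \emph{unique} largest nilpotent variety, because every nilpotent variety of degree $n$ is contained in $\mathcal{NILP}_n$. The one delicate point is to express ``$x$ and $y$ have the same degree'' without a numerical parameter. For finite-degree varieties the degree is determined by which members of the definable chain $\{\mathcal N_k\}$ (formula $\mathtt{All\text{-}N_k}$) lie below, so I would set
$$\mathtt{SameDeg}(x,y)\;\rightleftharpoons\;(\forall w)\,\bigl(\mathtt{All\text{-}N_k}(w)\longrightarrow(w\le x\longleftrightarrow w\le y)\bigr)$$
and then
$$\mathtt{AllNILP}(x)\;\rightleftharpoons\;\mathtt{Nilp}(x)\,\&\,(\forall y)\,\bigl(\mathtt{Nilp}(y)\,\&\,\mathtt{SameDeg}(x,y)\longrightarrow y\le x\bigr).$$
Since $\mathtt{Nilp}$ forces finite degree, the set $\{\mathcal N_k\}$ really does separate degrees of the relevant varieties (each such set of $\mathcal N_k$ below a variety is a finite initial segment), and the uniqueness of the largest nilpotent variety of each degree guarantees that $\mathtt{AllNILP}$ defines exactly $\{\mathcal{NILP}_n\mid n\ge1\}$.

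Finally I would define the strongly permutative varieties by
$$\mathtt{SPerm}(x)\;\rightleftharpoons\;(\exists y,z)\,\bigl(\mathtt{AllNILP}(y)\,\&\,\mathtt{COM}(z)\,\&\,x\le y\vee z\bigr).$$
Here $y\vee z=\mathcal{NILP}_n\vee\mathcal{COM}=\mathcal{PERM}_n$ (the value $n=1$ merely reproduces $\mathcal{COM}=\mathcal{PERM}_2$), so $\mathtt{SPerm}(x)$ holds iff $x\le\mathcal{PERM}_n$ for some $n$, i.e.\ iff $x$ is strongly permutative by the Putcha--Yaqub criterion. The main obstacle is step (a): producing a single formula for the whole family $\{\mathcal{NILP}_n\}$, which is why the ``same degree'' device together with the degree-wise uniqueness of the largest nilpotent variety carries the real weight; once that is in place, the join with $\mathcal{COM}$ and the final order comparison in $\mathtt{SPerm}$ are routine.
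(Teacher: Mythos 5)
Your argument is correct, but it takes a noticeably heavier route than the paper's. The paper also starts from the Putcha--Yaqub criterion, but then makes one further reduction that lets it avoid defining the family $\{\mathcal{NILP}_n\}$ altogether: since every nilpotent variety $\mathcal N$ is contained in $\mathcal{NILP}_n$ for some $n$, and each $\mathcal{NILP}_n$ is itself nilpotent, the condition ``$\mathcal V\subseteq\mathcal{COM}\vee\mathcal{NILP}_n$ for some $n$'' is equivalent to ``$\mathcal V\subseteq\mathcal{COM}\vee\mathcal N$ for some nilpotent $\mathcal N$''. The paper therefore writes simply $(\exists y,z)\,\bigl(\mathtt{COM}(y)\,\&\,\mathtt{Nilp}(z)\,\&\,x\le y\vee z\bigr)$, reusing the already available formula $\mathtt{Nilp}$ and needing no new machinery. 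What you call ``the main obstacle'' --- producing a single formula for the whole family $\{\mathcal{NILP}_n\}$ --- is thus self-imposed; it can be bypassed entirely. That said, your solution to it is sound: the $\mathtt{SameDeg}$ device works because, by the degree criterion via the chain $\{\mathcal N_k\}$ (as in the proof of Theorem \ref{fin degree}), two finite-degree varieties have the same degree exactly when the same members of that definable chain lie below them, and $\mathcal{NILP}_n$ is the unique largest nilpotent variety of each degree $n$. So as a by-product you obtain the definability of the set $\{\mathcal{NILP}_n\mid n\ge1\}$ by a single formula, which is a small uniform strengthening of Theorem \ref{nilp} (where each $\mathcal{NILP}_k$ is defined by its own formula $\mathtt{NILP_k}$ with the parameter $k$ built in); the price is a longer and less transparent defining formula for the theorem actually at hand.
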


\begin{proof}
Let $\mathcal V$ be a strongly permutative variety. Then
$$\mathcal{V\subseteq PERM}_n=\mathcal{COM\vee NILP}_n$$
for some $n$. Thus $\mathcal V$ is contained in the join of the variety
$\mathcal{COM}$ and some nilpotent variety. Now, suppose that a variety
$\mathcal V$ is contained in the join of $\mathcal{COM}$ and some nilpotent
variety $\mathcal N$. Then $\mathcal{N\subseteq NILP}_n$ for some $n$.
Therefore $\mathcal{V\subseteq COM\vee NILP}_n=\mathcal{PERM}_n$, whence
$\mathcal V$ is strongly permutative. We have proved that a variety is
strongly permutative if and only if it is contained in the join of the
variety $\mathcal{COM}$ and some nilpotent variety. Therefore the formula
$$\mathtt{StrPerm}(x)\;\rightleftharpoons\;(\exists y,z)\,\bigl(\mathtt{COM}
(y)\,\&\,\mathtt{Nilp}(z)\,\&\,x\le y\vee z\bigr)$$
defines the set of all strongly permutative varieties.
\end{proof}

At the conclusion, we note that there are many other semigroup varieties
whose definability (or definability up to duality) may be confirmed by
explicitly written formulas. We mention only one remarkable class of
varieties of such a kind, namely the class of all varieties of idempotent
semigroups. Indeed, the variety $\mathcal I$ is defined by the formula
$\max\nolimits_x\bigl\{\mathtt{Idemp}(x)\bigr\}$. Formulas defining the
variety $\mathcal{SL}$ and defining up to duality the varities $\mathcal
{LZ}$, $\mathcal{RZ}$, $\mathcal{LRB}$ and $\mathcal{RRB}$ are given above.
Let now $\mathcal B$ be a variety of idempotent semigroups with $\mathcal{B
\ne I}$. Then the lattice $L(\mathcal B)$ is finite (see Fig.\ \ref
{band varieties}). Let $n$ be the length of this lattice. Basing on Fig.\
\ref{band varieties}, it is easy to write (by induction on $n$) the formula
that defines $\mathcal B$ whenever $\mathcal B=\overleftarrow{\mathcal B}$
and defines $\mathcal B$ up to duality whenever $\mathcal B\ne\overleftarrow
{\mathcal B}$.

\medskip

\textbf{Acknowledgement.} The author thanks Dr.\ Olga Sapir for many
stimulating discussions.

\end{document}